\newcommand{\R}{\mathbb R}
\newtheorem{theorem}{Theorem}[section]
\newtheorem{corollary}{Corollary}[section]
\newtheorem{proposition}{Proposition}[section]
\newtheorem{remark}{Remark}[section]
\newtheorem{lemma}{Lemma}[section]
\newcommand{\ds}{\displaystyle}
\renewcommand{\epsilon}{\varepsilon}
\renewcommand{\phi}{\varphi}
\newenvironment{proofof}[1]{\smallskip\noindent\emph{Proof of #1.}%
\hspace{1pt}}{\hspace{-5pt}{\nobreak\quad\nobreak\hfill\nobreak%
$\square$\vspace{8pt}\par}\smallskip\goodbreak}
\renewcommand{\c}{s} 
\newlength{\captionwidth}
\long\def\@makecaption#1#2{%
   \vskip 10\p@
   \setbox\@tempboxa\hbox{#1: #2}%
   \ifdim \wd\@tempboxa > \captionwidth 
       \hbox to\hsize{\hfil
       \parbox[t]{\captionwidth}{
       \small#1: \small#2\par}
       \hfil}
     \else
       \hbox to\hsize{\hfil\box\@tempboxa\hfil}%
   \fi}
\tikzset{
  on each segment/.style={
    decorate,
    decoration={
      show path construction,
      moveto code={},
      lineto code={
        \path [#1]
        (\tikzinputsegmentfirst) -- (\tikzinputsegmentlast);
      },
      curveto code={
        \path [#1] (\tikzinputsegmentfirst)
        .. controls
        (\tikzinputsegmentsupporta) and (\tikzinputsegmentsupportb)
        ..
        (\tikzinputsegmentlast);
      },
      closepath code={
        \path [#1]
        (\tikzinputsegmentfirst) -- (\tikzinputsegmentlast);
      },
    },
  },
  mid arrow/.style={postaction={decorate,decoration={
        markings,
        mark=at position .6 with {\arrow[#1]{stealth}}
      }}},
}
\begin{document}

\title{
  The role of convection in the existence of wavefronts for biased movements}
\author{
Diego Berti\footnote{Department of Mathematics, University of Pisa, Italy}
\and
Andrea Corli\footnote{Department of Mathematics and Computer Science, University of Ferrara, Italy}
\and
Luisa Malaguti\footnote{Department of Sciences and Methods for Engineering, University of Modena and Reggio Emilia, Italy}
}

\maketitle

\begin{abstract}
We investigate a model, inspired by Johnson et al. (2017), see \cite{JBMS}, to describe the movement of a biological population which consists of isolated and grouped organisms. We introduce biases in the movements and then obtain a scalar reaction-diffusion equation which includes a convective term as a consequence of the biases. We focus on the case the diffusivity makes the parabolic equation of forward-backward-forward type and the reaction term models a strong Allee effect, with the Allee parameter lying between the two internal zeros of the diffusion. In such a case, the unbiased equation (i.e., without convection) possesses no smooth traveling-wave solutions; on the contrary, in the presence of convection, we show that traveling-wave solutions do exist for some significant choices of the parameters. We also study the sign of their speeds, which provides information on the long term behavior of the population, namely, its survival or extinction.
\end{abstract}

\vspace{1cm}
\noindent \textbf{AMS Subject Classification:} 35K65; 35C07, 35K57, 92D25

\smallskip
\noindent
\textbf{Keywords:} Population dynamics, biased movement, sign-changing diffusivity, traveling-wave solutions, diffusion-convection reaction equations.

\section{Introduction}
\label{sec:intro}
In this paper we investigate a model to describe the movement of biological organisms. Its detailed presentation appears in Section \ref{s:model}. Inspired by the recent paper \cite{JBMS}, we assume that the population is constituted of isolated and grouped organisms; our discussion  is presented in the case of a single spatial dimension but could be extended to the whole space. The first rigorous mathematical deduction of movement for organisms appeared in \cite{Patlak}; since then, several models have been proposed, see for instance \cite{HPO, JBMS, Murray, Okubo-Levin, OS} and references there. In this context, a common procedure is to start from a discrete framework where the transition probabilities per unit time $\tau$ and for a one-step jump-width $l$ are assigned, and then pass to the limit for $\tau,l\to0$. In the aforementioned papers the limiting assumptions make the diffusivity totally  responsible for the movement, and no convection term appears; see however \cite[\S 5.3]{Okubo-Levin} and \cite{Padron}, for instance, for the deduction of a model which also include a convective effect. Here, we generalize the model in \cite{JBMS} by introducing a possibly {\em biased} movement, which leads, in general, to a convective term. As a consequence, we show the appearance of a greater variety of dynamics which allow to better investigate the long term behavior of the population; in particular, to predict its survival or extinction.

\smallskip

Our model is described by a reaction-diffusion-convection equation
\begin{equation}\label{e:E}
u_t + f(u)_x=\left(D(u)u_x\right)_x + g(u), \qquad t\ge 0, \, x\in \R,
\end{equation}
where the functions $f, D$ and $g$ satisfy \eqref{e:coeffs f}, \eqref{e:coeffs D} and \eqref{e:coeffs g},  respectively. The unknown function $u$ denotes the density (or concentration) of the population and then it has bounded range; for simplicity we assume $u\in[0,1]$.
An interesting feature of equation \eqref{e:E} in this context is that {\em negative diffusivities} arise for several natural choices of the parameters. As in \cite{JBMS}, here we consider a diffusion term which makes equation \eqref{e:E} of forward-backward-forward type. This occurrence was already noticed in other papers, see for instance \cite{SanchezGarduno-Maini-Velazquez, Turchin} in the case of a homogeneous population under different assumptions. Notice however that the deduction of the model both in \cite{JBMS} and in the present paper also involves the reaction term, while in \cite{SanchezGarduno-Maini-Velazquez, Turchin} it is limited to diffusion.
As opposite to {\em positive} diffusivities, which model the spatial spreading, {\em negative} diffusivities are usually interpreted to model the \lq\lq chaotic\rq\rq\ movement which follows from aggregation \cite{SanchezGarduno-Maini-Velazquez, Turchin}. In turn, the latter is \lq\lq a macroscopic
effect of the isolated and the grouped motility of the agents, together
with competition for space\rq\rq \cite{Li2021}. At last, we assume that the reaction term $g$ shows the strong Allee effect, i.e., it is of the so called \emph{bistable} type (see assumption (g) below).

\smallskip

We focus on the existence of  traveling-wave solutions $u(x,t)=\phi(x-ct)$ to equation \eqref{e:E}, for some profiles $\phi=\phi(\xi)$ and wave speeds $c$, see \cite{GK} for general information. If the profile is defined in $\R$, it is monotone, nonconstant, and reaches asymptotically the equilibria of \eqref{e:E}, then the corresponding traveling-wave solution is called a {\em wavefront}. We consider precisely  decreasing profiles which  connect the outer equilibria of $g$, i.e.,
\begin{equation}\label{e:bvs}
\phi(-\infty)=1 \quad \hbox{ and }\quad \phi(\infty)=0.
\end{equation}
The case when profiles are increasing, and then satisfy $\phi(-\infty)=0$, $\phi(\infty)=1$, is dealt analogously and leads to a similar discussion. These solutions, even if of a special kind, have several advantages: they are global, they are often in good agreement with experimental data \cite{Murray}, and can be attractors for more general solutions \cite{Fife}. Moreover, when $u$ represents the density of a biological species, as in this case, then condition \eqref{e:bvs} means that, for times $t\to\infty$, the species either successfully persists if $c>0$, or it becomes extinct if $c<0$. The wavefront profile $\phi$ must satisfy the ordinary differential equation
\begin{equation}\label{e:ODE}
\left(D(\phi)\phi^{\prime}\right)^{\prime}+\left(c -\dot f(\phi)\right)\phi' + g(\phi)=0.
\end{equation}
We used the notation $\dot{}:=d/du$ and ${}':=d/d\xi$. Although one can consider the case of discontinuous profiles, see \cite{LHMS, Li2021} and references there, in this paper we focus on regular monotone profiles of equation \eqref{e:ODE}. This means that they are continuous, and of class $C^2$ except possibly at points where $D$ vanishes; then solutions to equation \eqref{e:ODE} are intended in the distribution sense.

\noindent The existence of wavefronts is treated here  in a quite general framework, which includes, in particular, our biological model.
More precisely, we fix three real numbers $\alpha,\beta,\gamma$ satisfying
\begin{equation}\label{eq:condition} 0< \alpha < \gamma < \beta <1,
\end{equation}
and assume, see Figure \ref{f:Dg},
\begin{itemize}
\item[(f)] \, $f\in C^1[0,1]$;

\item[(D)] \, $D\in C^1[0,1]$, $D>0$ in $[0,\alpha) \cup (\beta,1]$, and $D<0$ in $(\alpha, \beta)$;

\item[{(g)}]\, $g\in C^1[0,1]$, $g<0$ in $(0,\gamma)$, $g>0$ in $(\gamma,1)$, and $g(0)=g(\gamma)=g(1)=0$.
\end{itemize}
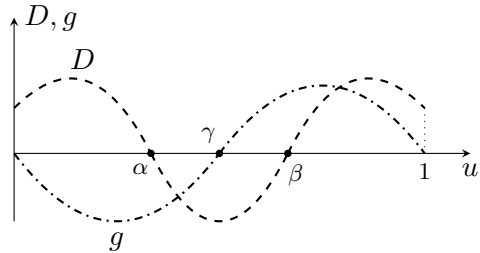
\begin{figure}[htb]
\begin{center}
\begin{tikzpicture}[>=stealth, scale=0.6]
\draw[->] (0,0) --  (10,0) node[below]{$u$} coordinate (x axis);
\draw[->] (0,0) -- (0,3) node[right]{$D,g$} coordinate (y axis);
\draw (0,0) -- (0,-1.5);
\draw[thick,dashed] (0,1) .. controls (1,2) and (2,2) .. (3,0) node[left=4, below=0]{{\footnotesize{$\alpha$}}} node[midway,above]{$D$};
\draw[thick, dashed] (3,0) .. controls (4,-2) and (5,-2) .. (6,0);
\draw[thick, dashed] (6,0) node[right=3, below=0]{{\footnotesize{$\beta$}}} .. controls (7,2) and (8,2) .. (9,1);
\draw[dotted] (9,1) -- (9,0) node[below]{{\footnotesize{$1$}}};
\draw[thick,dashdotted] (0,0) .. controls (1.5,-2) and (3,-2) .. (4.5,0) node[left=4, above=0]{{\footnotesize{$\gamma$}}} node[midway,below]{$g$};
\draw[thick,dashdotted] (4.5,0) .. controls (6,2) and (7.5,2) .. (9,0);
\filldraw[black] (3,0) circle (2pt);
\filldraw[black] (4.5,0) circle (2pt);
\filldraw[black] (6,0) circle (2pt);
\end{tikzpicture}

\end{center}
\caption{\label{f:Dg}{Typical plots of the functions $D$ (dashed line) and $g$ (dashdotted line).}}
\end{figure}

Since $f$ in \eqref{e:E} is defined up to an additive constant, we can take $f(0) =0$. The term $\dot f(u)$ represents the drift of the total concentration $u$ and prescribes in particular if a concentration wave is moving toward the right ($\dot f(u)>0$) or toward the left ($\dot f(u)<0$).
The parabolic equation \eqref{e:E} is of {\em backward} type in the interval $(\alpha,\beta)$ and of {\em forward} type elsewhere; moreover, it degenerates at $\alpha$ and $\beta$.

\smallskip

The presence of wavefronts to \eqref{e:E} satisfying (D) and (g) and with $f=0$ was first discussed in \cite{Kuzmin-Ruggerini}, where it is shown in particular that, if a wavefront exists,  then $\gamma\notin[\alpha,\beta]$. Such a situation and many others, again with $f=0$, was also considered  in \cite[cases 6.3, 8.3]{JBMS}, in the framework of the particular model deduced in that paper. The case with convection is not yet completely understood. Then our issue here is
\[
\hbox{{\em whether and when the presence of the convective flow allows the existence of wavefronts}.}
\]
An intuitive argument, see Remark \ref{r:exw-fconcave}, shows that the answer is in the affirmative at least for suitable concave $f$. We now briefly report on the content of this paper.

\smallskip

In Section \ref{s:model} we introduce the biological model and state our main results about it, for more immediacy, in a somewhat simplified way; proofs, which require the analysis of the general case dealt in Section \ref{s:tools}, and more details, are deferred to Section \ref{s:app}. In particular, we provide positive or negative results for each of the possible behaviors of $f$, namely, when $f$ is concave, convex, or it changes concavity once.

\smallskip

In Section \ref{s:tools} we investigate the  {\em fine properties} (uniqueness, strict monotonicity, estimates of speed thresholds) of such wavefronts for equation \eqref{e:E} satisfying (f)-(D) and (g).  A similar discussion for a \emph{monostable} reaction term $g$  appeared in \cite{BCM2} and \cite{BCM4}, respectively in the general framework and  for the population model with biased movements. We recall that $g$ is called {\em monostable} if $g>0$ in $(0,1)$ and $g(0)=g(1)=0$.


\smallskip As in our aforementioned papers, we exploit here an order-reduction technique. Since we focus on profiles $\phi=\phi(\xi)$ that are strictly monotone when $\phi\in(0,1)$, we can consider the inverse function $\phi^{-1}(\phi)$ of $\phi$ and, by denoting $z(\phi) := D(\phi)\phi\rq{}\left(\phi^{-1}(\phi)\right)$, we reduce the problem \eqref{e:ODE} to a first-order singular boundary-value problem for $z$ in $[0,1]$. This problem is tackled by the classical techniques of upper- and lower-solutions. This technique requires lighter assumptions than the phase-plane analysis in \cite{JBMS} and is simpler than the geometric singular perturbation theory exploited in \cite{LHMS}. Then wavefronts satisfying \eqref{e:bvs} are obtained by suitably pasting traveling waves. The results appear in Section \ref{s:tools}, they are given for an arbitrary equation \eqref{e:E} satisfying conditions (f), (D), (g), and they are original. About (g), the mere requirement that $g$ is continuous and the product $Dg$ differentiable at $0$ would be sufficient for us. Both for (D) and (g), we made slightly stricter assumptions than necessary both for simplicity and because they are satisfied by our biological model with biased movements. The cases when the internal zero of $g$ is before $\alpha$, i.e. $\gamma \in (0, \alpha)$, or after $\beta$, that is  $\gamma\in (\beta, 1)$, are not treated here. Equation \eqref{e:E} with $f=0$ admits wavefronts in these cases and we expect that they persist also in the presence of the convective effect $f$.
\smallskip

The issue of the linear stability of the wavefronts is certainly interesting; we claim that it could be developed as in \cite{LHMS, Sattinger}, with a similar discussion.
\section{A biological model with biased movements}\label{s:model}
In this section we first summarize a model for the movement of organisms recently presented in \cite{JBMS} for populations constituted by two groups of individuals. Then we show how a convective term can appear in the equation because of a biased movement. At last, we provide our results about wavefronts for such a model;
proofs are deferred to Section \ref{s:app}.

\subsection{The model}
The population is divided into {\em isolated} and {\em grouped} organisms. Both groups can move, reproduce and die, with possibly different rates. The organisms occupy the sites $jl$, for $j=0,\pm1,\pm2,\ldots$ and $l>0$; we denote by $c_j$ the probability of occupancy of the $j$-th site. Let $P_m^i$ and $P_m^g$ be the movement transitional probabilities for isolated and grouped individuals, respectively; we use the notation $P_m^{i,g}$ to indicate the two sets of parameters together. Analogously, the corresponding probabilities for birth and death are $P_b^{i,g}$ and  $P_d^{i,g}$.

Differently from \cite{JBMS}, we also introduce the parameters $a^i,b^i\ge0$ and $a^g,b^g\ge0$, which characterize a (linearly) biased movement for the isolated and grouped individuals. For the isolated individuals the bias is towards the left if $a^i-b^i>0$ and towards the right if $a^i-b^i<0$; for the grouped individuals the same occurs when either $a^g-b^g>0$ or $a^g-b^g>0$, respectively.
In the case of \cite{JBMS} one has $a^i=b^i=a^g=b^g=1$ and then $a^{i,g}-b^{i,g}=0$.

\begin{figure}[htb]
\begin{center}
\begin{tikzpicture}[>=stealth, scale=0.7]
\draw[->] (0,0) --  (16,0) coordinate (x axis);

\draw(2,-0.2) node[below]{\footnotesize$j-2$}--(2,0.2);
\draw(5,-0.2) node[below]{\footnotesize$j-1$}--(5,0.2);
\draw(8,-0.2) node[below]{\footnotesize$j$}--(8,0.2);
\draw(11,-0.2) node[below]{\footnotesize$j+1$}--(11,0.2);
\draw(14,-0.2) node[below]{\footnotesize$j+2$}--(14,0.2);

\draw[->](7.9,0.5) .. controls (7,1.5) and (6,1.5) .. (5.1,0.5) node[above, midway]{\footnotesize$a^{i,g}$};
\draw[->](8.1,0.5) .. controls (9,1.5) and (10,1.5) .. (10.9,0.5) node[above, midway]{\footnotesize$b^{i,g}$};
\end{tikzpicture}
\end{center}
\caption{\label{f:ab}{Sketch of the meaning of the parameters $a^{i,g}$ and $b^{i,g}$.}}
\end{figure}
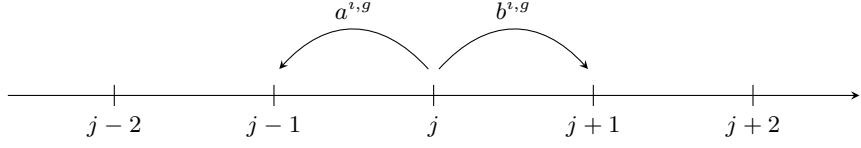

Then, the variation $\delta c_j$ of $c_j$ during a time-step $\tau>0$ is given by
\begin{align}
\lefteqn{\delta c_j=}
\nonumber
\\
= &\frac{ P_m^i}{2}\left[a^ic_{j-1}(1-c_j)(1-c_{j-2}) + b^i c_{j+1}(1-c_j)(1-c_{j+2})\right.
\left.-(a^i+b^i)c_j(1-c_{j-1})(1-c_{j+1})\right]
\nonumber
\\
&+\frac{P_m^g}{2}\left[a^g c_{j-1}(1-c_j) + b^g c_{j+1}(1-c_j) - a^g c_j(1-c_{j+1})  - b^g c_j(1-c_{j-1}) \right]
\nonumber
\\
&-\frac{P_m^g}{2}\left[a^g c_{j-1}(1-c_j)(1-c_{j-2}) + b^g c_{j+1}(1-c_j)(1-c_{j+2}) \right.
\nonumber
\\
&\quad
\left.- (a^g+b^g)c_j(1-c_{j-1})(1-c_{j+1})\right]
+\hbox{ reaction terms,}
\label{e:dtg}
\end{align}
where
\begin{align*}
\lefteqn{\hbox{ reaction terms } =}
\\
=&\frac{P_b^i}{2}\left[c_{j-1}(1-c_j)(1-c_{j-2}) + c_{j+1}(1-c_j)(1-c_{j+2})\right]
+\frac{P_b^g}{2}\left[c_{j-1}(1-c_j) + c_{j+1}(1-c_j)\right]
\\
&-\frac{P_b^g}{2}\left[c_{j-1}(1-c_j)(1-c_{j-2}) + c_{j+1}(1-c_j)(1-c_{j+2})\right]
-\frac{P_d^i}{2}\left[c_j(1-c_{j-1})(1-c_{j+1})\right]
\\
&-\frac{P_d^g}{2}\left[c_j\right] + \frac{P_d^i}{2}\left[c_j(1-c_{j-1})(1-c_{j+1})\right].
\end{align*}
By noticing that every bracket is divided by $2$, we deduce
\begin{equation}\label{e:ab-2}
a^i+b^i=a^g+b^g = 2,
\end{equation}
since in the deduction of \eqref{e:dtg} a bias $a^{i,g}$ implies a converse bias $b^{i,g}=2-a^{i,g}$.

The continuum model is obtained by replacing $c_j$ with a smooth function $c=c(x,t)$ and expanding $c$ around $x=jl$ at second order. Then, we divide \eqref{e:dtg} by $\tau$ and pass to the limit for $l,\tau\to0$ while keeping $l^2/\tau$ constant; for simplicity we assume $l^2/\tau=1$. To perform this step one makes the following assumptions on the reactive-diffusive terms \cite{JBMS}:
\begin{equation}\label{e:proba}
\frac{P_m^{i,g}l^2}{2\tau}\sim D_{i,g},\quad
\frac{P_b^{i,g}}{2\tau}\sim \lambda_{i,g},\quad
\frac{P_d^{i,g}}{2\tau}\sim k_{i,g},\quad
P_b^{i,g}, P_d^{i,g} = O(\tau),\quad \hbox{ for } l\to0,\tau\to0.
\end{equation}
The above limits define the diffusivity parameters $D_{i,g}$, the birth rates $\lambda_{i,g}$, and the death rates $k_{i,g}$; all these parameters are non-negative.
About the convection terms, we require
\begin{equation}\label{e:ab-limits}
a^{i,g}(\tau) \sim 1, \ b^{i,g}(\tau) \sim 1 \quad \hbox{ and }\quad a^{i,g}(\tau)-b^{i,g}(\tau)\sim C_{i,g}\sqrt{\tau}\quad \hbox{ for } \tau\to0,
\end{equation}
for some $C_{i,g}\in\mathbb{R}$. We stress that the parameters $C_{i,g}$ can be either positive or negative according to the values of the bias coefficients $a^{i,g}$ and $b^{i,g}$; in particular, if $C_i>0$ then we have a bias toward the left of the isolated individuals, and toward the right if $C_i<0$; the analogous bias for the grouped individuals corresponds to either $C_g>0$ (left) or $C_g<0$ (right). If $C_{i,g}=0$, then the corresponding bias is too weak to pass to equation \eqref{e:model}; with a slight abuse of terminology we say that the corresponding group has no convective movement. At last, assumption $\eqref{e:ab-limits}_1$ is compatible with \eqref{e:ab-2}; assumption $\eqref{e:ab-limits}_2$ is analogous to $\eqref{e:proba}_4$.

In conclusion, we obtain the equation
\begin{equation}\label{e:model}
u_t + f(u)_x=\left(D(u)u_x\right)_x + g(u),
\end{equation}
with
\begin{align}
\label{e:coeffs f}
f(u) &= -\left(C_iD_i+C_gD_g\right)u(1-u)^2 - C_gD_g u(1-u),
\\
\label{e:coeffs D}
D(u) &= D_i\left(1-4u+3u^2\right) + D_g\left(4u-3u^2\right),
\\
\label{e:coeffs g}
g(u) &= \lambda_g u (1-u) +\left[\lambda_i-\lambda_g- \left(k_i - k_g\right) \right] u(1-u)^2 -k_g u.
\end{align}
The model \eqref{e:model} depends on the eight parameters $C_{i,g}$, $D_{i,g}$, $k_{i,g}$ and $\lambda_{i,g}$. Equation \eqref{e:model} coincides with \eqref{e:E} but we agree that when we refer to \eqref{e:model} we understand $f$, $D$ and $g$ as in \eqref{e:coeffs f}--\eqref{e:coeffs g}. We point out that $f(0)=f(1)=0$, i.e., the convective flow vanishes when the density is either zero or maximum, as physically it should be.
When  $C_i=0$ the isolated individuals have no convective movement and the function $f$ is convex in $[0,1]$ if $C_g>0$ and concave otherwise. Instead, when $C_g=0$ the grouped individuals have no convective movements and $f$ changes its concavity for $u=2/3$. The diffusion and reaction terms \eqref{e:coeffs D}-\eqref{e:coeffs g} coincide with those in \cite[(2)]{JBMS}, while $f$ is missing there.



\subsection{Main results on the model}
About the model introduced in the previous subsection, the case we are interested in is  when  conditions (D) and (g) are satisfied; the corresponding assumptions on the parameters have already been given in \cite{JBMS, Li2021}.

\begin{lemma}\label{l:la}
The diffusivity $D$ in \eqref{e:coeffs D} satisfies {\rm (D)} if and only if $D_i>4D_g>0$. In this case we have
\begin{equation}\label{e:alphabeta}
\alpha=\frac23 - \frac\omega3 \quad \hbox{ and }\quad \beta=\frac23 + \frac\omega3, \qquad  \hbox{ for } \omega :=\sqrt{\frac{D_i-4D_g}{D_i-D_g}}.
\end{equation}
The reaction term $g$ in \eqref{e:coeffs g} satisfies {\rm (g)} if and only if $k_g=0, \, \lambda_g>0$ and $r_i:=k_i-\lambda_i>0$. In this case
\begin{equation}\label{e:gamma}
\gamma=\frac{r_i}{r_i+\lambda_g},
\end{equation}
and $\gamma\in(\alpha,\beta)$ if and only if
\begin{equation*}
\frac{1-\omega}{2+\omega} < \frac{\lambda_g}{r_i}<\frac{1+\omega}{2-\omega}.
\end{equation*}
\end{lemma}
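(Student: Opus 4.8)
The plan is to verify each of the three claimed equivalences in Lemma~\ref{l:la} by direct computation, since every assertion is a statement about when an explicit polynomial in $u$ has a prescribed sign pattern on $[0,1]$. I would treat the three parts independently.

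For condition~(D), I start from \eqref{e:coeffs D}, which I rewrite as $D(u)=D_i + (4D_g-4D_i)u + (3D_i-3D_g)u^2$, a quadratic in $u$ opening upward (since the leading coefficient $3(D_i-D_g)$ is positive once we grant $D_i>D_g$). Evaluating at the endpoints gives $D(0)=D_i$ and $D(1)=D_g$; for (D) we need both $D(0)>0$ and $D(1)>0$, hence $D_i>0$ and $D_g>0$. The requirement $D<0$ on an interval $(\alpha,\beta)\subset(0,1)$ with $D>0$ at both endpoints forces the quadratic to have two real roots inside $(0,1)$. Solving $D(u)=0$ by the quadratic formula and simplifying the discriminant produces the quantity $\omega^2=(D_i-4D_g)/(D_i-D_g)$, which yields the roots $\frac23\mp\frac\omega3$ of \eqref{e:alphabeta}; here I must check that $\omega$ is real, i.e. $D_i>4D_g$, and that $0<\alpha<\beta<1$, which amounts to $0<\omega<1$, itself equivalent to $D_i>4D_g>0$. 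This establishes the converse direction and pins down $\alpha,\beta$; the forward direction follows by reading the same chain of equivalences backwards.

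For condition~(g), I substitute $g$ from \eqref{e:coeffs g} and first impose the three prescribed zeros. Since every term carries a factor $u$, we automatically have $g(0)=0$. The factor structure suggests writing $g(u)=u\,h(u)$ with $h$ a quadratic; then $g(1)=0$ forces the coefficient $-k_g$ arising at $u=1$ to vanish, giving $k_g=0$, and the sign conditions $g<0$ on $(0,\gamma)$, $g>0$ on $(\gamma,1)$ require $h$ to change sign exactly once at an interior point $\gamma$. A short computation of $h$ with $k_g=0$ shows $h(0)=\lambda_i-k_i=-r_i$ and $h(1)=\lambda_g$, so the sign change at a single interior root forces $\lambda_g>0$ and $r_i>0$; solving $h(\gamma)=0$ gives \eqref{e:gamma}. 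Again both implications are captured by the same equivalences.

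The final claim, $\gamma\in(\alpha,\beta)$, is then purely algebraic: I insert \eqref{e:alphabeta} and \eqref{e:gamma} into the double inequality $\frac23-\frac\omega3<\gamma<\frac23+\frac\omega3$ and clear denominators. Writing $\gamma=r_i/(r_i+\lambda_g)$, the inequality $\alpha<\gamma$ becomes, after cross-multiplying by the positive quantity $r_i+\lambda_g$ and rearranging, a bound of the form $\lambda_g/r_i<(1+\omega)/(2-\omega)$, and similarly $\gamma<\beta$ yields $(1-\omega)/(2+\omega)<\lambda_g/r_i$; combining gives the stated chain. The main thing to be careful about here is sign-tracking when multiplying inequalities: one must confirm that each factor used to clear a denominator (namely $r_i+\lambda_g$, $2\pm\omega$, and $r_i$) is strictly positive under the standing hypotheses $r_i>0$, $\lambda_g>0$, $0<\omega<1$, so that the inequalities do not flip. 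I do not anticipate a genuine conceptual obstacle; the only real work is the discriminant simplification in part~(D) and the disciplined bookkeeping in the cross-multiplications for the $\gamma\in(\alpha,\beta)$ equivalence.
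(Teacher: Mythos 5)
Your proposal is correct and follows essentially the same elementary route as the paper, which analyzes the parabola $D$ via its vertex value $D(2/3)=(4D_g-D_i)/3$ rather than the discriminant, and likewise reads off the conditions on $g$ from its factorization and obtains the last equivalence by cross-multiplication. One small bookkeeping slip: with $h(u):=g(u)/u$ and $k_g=0$ one has $h(1)=0$, not $\lambda_g$; the values $-r_i$ and $\lambda_g$ are those of the linear cofactor remaining after also extracting the factor $(1-u)$, which is the factor your sign argument actually uses.
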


Notice that $\omega \in (0,1)$, $\beta-\alpha =2\omega/3$ and $\alpha+\beta = 4/3$. The condition $k_g=0$ clearly has no biological sense, and is interpreted in the sense that the life expectancy of grouped individuals is much larger than that of isolated individuals; the condition $r_i>0$ further hinders the latter. Here, $\gamma$ is the Allee parameter \cite{JBMS, Li2021}.

Here follows a simple necessary condition for the existence of wavefronts.

\begin{proposition}\label{p:CN}
If \eqref{e:model} admits wavefronts satisfying condition \eqref{e:bvs}, then $C_g<0$.
\end{proposition}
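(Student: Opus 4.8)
The plan is to pass to the first-order formulation that underlies the whole paper and to read off a sign constraint on the speed from the two degenerate points $\alpha$ and $\beta$. Writing $u=\phi(\xi)$ and $z(u):=D(u)\phi'\bigl(\phi^{-1}(u)\bigr)$, so that $z\circ\phi=D(\phi)\phi'$, a direct computation using $\phi'=z/D$ turns \eqref{e:ODE} into the first-order singular equation
\begin{equation*}
z\,\dot z+\bigl(c-\dot f(u)\bigr)z+D(u)g(u)=0,\qquad u\in[0,1],
\end{equation*}
to be solved with $z(0)=z(1)=0$. Since $\phi$ is strictly decreasing, $\phi'<0$, the sign of $z=D(\phi)\phi'$ is opposite to that of $D$: by (D) the function $z$ is negative on $(0,\alpha)\cup(\beta,1)$, positive on $(\alpha,\beta)$, and it vanishes at $0,\alpha,\beta,1$. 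In particular $z$ passes from negative to positive as $u$ increases through $\alpha$, and from positive to negative through $\beta$.

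First I would analyze the equation near the degenerate points. At $u=\alpha$ one has $D(\alpha)=0$, $g(\alpha)<0$ (as $\alpha<\gamma$), and $\dot D(\alpha)<0$, so $D(u)g(u)\sim\dot D(\alpha)g(\alpha)(u-\alpha)$ with $\dot D(\alpha)g(\alpha)>0$. Seeking the leading behavior $z\sim A(u-\alpha)$ and matching the terms of order $(u-\alpha)$ gives
\begin{equation*}
A^2+\bigl(c-\dot f(\alpha)\bigr)A+\dot D(\alpha)g(\alpha)=0 .
\end{equation*}
The sign pattern forces $z$ to approach $\alpha$ with positive slope from both sides, so this quadratic must admit a positive root; since its roots have positive product $\dot D(\alpha)g(\alpha)>0$ they share a sign, and a positive root exists only if their sum $-(c-\dot f(\alpha))$ is positive, i.e. only if $c<\dot f(\alpha)$. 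The same analysis at $u=\beta$, where now $\dot D(\beta)>0$ and $g(\beta)>0$ so again the constant term $\dot D(\beta)g(\beta)>0$, requires a negative slope and hence $c>\dot f(\beta)$. Combining, a wavefront can exist only if $\dot f(\beta)<\dot f(\alpha)$.

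It then remains to compute $\dot f(\alpha)-\dot f(\beta)$ from \eqref{e:coeffs f}. With $P:=C_iD_i+C_gD_g$ and $Q:=C_gD_g$ one finds $\dot f(u)=-P(1-u)(1-3u)-Q(1-2u)$, where $(1-u)(1-3u)=1-4u+3u^2$. The crucial point is that, by \eqref{e:alphabeta}, $\alpha+\beta=4/3$, whence $\bigl(1-4\alpha+3\alpha^2\bigr)-\bigl(1-4\beta+3\beta^2\bigr)=(\alpha-\beta)\bigl(3(\alpha+\beta)-4\bigr)=0$, so the entire $P$-contribution, and with it all dependence on $C_i$, cancels. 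Using $\beta-\alpha=2\omega/3$ from \eqref{e:alphabeta}, what survives is
\begin{equation*}
\dot f(\alpha)-\dot f(\beta)=-Q\bigl[(1-2\alpha)-(1-2\beta)\bigr]=-\tfrac{4\omega}{3}\,C_gD_g .
\end{equation*}
Since $D_g>0$ and $\omega\in(0,1)$, the inequality $\dot f(\alpha)-\dot f(\beta)>0$ is equivalent to $C_g<0$, which is the claim.

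The delicate point, and the one I expect to be the main obstacle, is the rigorous justification of the local behavior at $\alpha$ and $\beta$: namely that a regular decreasing profile forces $z$ to vanish there with sign-definite slope, legitimizing the quadratic matching and turning the root conditions into genuine speed thresholds (in particular ruling out, or otherwise handling, vertical-tangent behavior where $\phi'\to-\infty$ while $D\to0$). This is exactly the fine information — existence, strict monotonicity, and speed-threshold estimates at the degenerate points — developed for \eqref{e:E} under (f), (D), (g) in Section \ref{s:tools}; the proof would invoke those results to make $c<\dot f(\alpha)$ and $c>\dot f(\beta)$ precise, after which the computation above, and the pleasant cancellation of the $C_i$-terms, closes the argument.
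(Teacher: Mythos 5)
Your argument is correct and is essentially the paper's own: the inequality $\dot f(\beta)<c<\dot f(\alpha)$ you extract from the formal linearization of $z$ at the degenerate points $\alpha,\beta$ is precisely (a slightly weaker form of) the necessary condition \eqref{nec1}, which the paper obtains rigorously from the speed-threshold estimates \eqref{e:estimc12} via Corollary \ref{nec}, and your concluding computation — the cancellation of the $C_i$-terms through $\alpha+\beta=4/3$ and the identity $\dot f(\alpha)-\dot f(\beta)=-\tfrac{4\omega}{3}C_gD_g$ — is exactly the one in the paper's proof. The ``delicate point'' you flag is indeed handled by Lemma \ref{lem:0101} and Proposition \ref{prop:solvability}, so citing \eqref{nec1} directly closes the gap.
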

It is easy to see that a necessary condition to have wavefronts satisfying conditions $\phi(-\infty)=0$ and $\phi(\infty)=1$, instead of \eqref{e:bvs}, is $C_g>0$.

We now summarize the restrictions required on the parameters:
\begin{gather}
C_g<0,\qquad
D_i>4D_g>0,
\label{e:restr1-1}
\\
r_i:= k_i-\lambda_i>0, \qquad k_g=0, \qquad \lambda_g>0,
\label{e:restr1-2}
\\
\ds\frac{1-\omega}{2+\omega} < \frac{\lambda_g}{r_i}<\frac{1+\omega}{2-\omega},
\label{e:restr1-3}
\end{gather}
with $\omega$ defined in \eqref{e:alphabeta}. We {\em always} assume conditions \eqref{e:restr1-1}--\eqref{e:restr1-3} in the following, without any further mention. The results below are preferably stated by referring to the following dimensionless quotients and by lumping the parameters referring to the grouped population into a single dimensionless parameter as follows:
\begin{equation}\label{e:def-cd}
\c:=\frac{C_i}{\vert C_g \vert},
\qquad
d:=\frac{D_i}{D_g}>4,
\qquad 	
\mu := \frac{r_i}{\lambda_g}= \frac{k_i-\lambda_i}{\lambda_g},
\qquad
E_g:=|C_g|\sqrt{\frac{D_g}{\lambda_g}}.
\end{equation}
Under this notation we have
\begin{equation}\label{e:omegadad}
\omega = \sqrt{\frac{d-4}{d-1}}.
\end{equation}
Notice that $E_g$ gathers the parameters concerning convection, diffusion and reaction of the grouped individuals; the parameter $\mu$ is the ratio between the net increasing rate of the isolated and grouped individuals. Notice that condition \eqref{e:restr1-3} is equivalent to
\begin{equation}\label{e:restr1-3-mu}
\frac{2-\omega}{1+\omega} < \mu <\frac{2+\omega}{1-\omega}.
\end{equation}

The convective term $f$ can change convexity at most once; then, it can be either concave or convex, or else convex-concave or concave-convex. We now examine each of these cases; in all of them, we emphasize that $s$ is always multiplied by $d$. Since the parameter $s$ does not depend on $d$, we can understand $sd$ as a variable  independent from $d$, which lumps the ratios of the coefficients related to the movement. In this way we shall often deal with the couple $(\omega, sd)$ of parameters, where $\omega$ depends on $d$.

\paragraph{The concave case} The convective term $f$ is strictly concave if and only if
\begin{equation}\label{e:restr2}
0 \le \c d \le\frac{3}{2},
\end{equation}
(see Lemma \ref{lem:conc}). In this case, model \eqref{e:model} admits wavefronts satisfying \eqref{e:bvs} and we can also discuss  the sign of their speeds $c$; we now provide some prototype results. The key condition is \begin{equation}\label{e:argen}
r_i(2+\omega)+\lambda_g(1+\omega) < \frac{8 C_g^2D_g}{81} \frac{d-4}{(d-1)^2}.
\end{equation}
Condition \eqref{e:argen} contains several parameters; therefore, there are many ways of discussing the results, depending on which parameters are set and which are held constant; we focus on two different choices.

First, for fixed $C_g, D_g$ we consider the triangle, see Figure \ref{f:triangle} on the left,
\begin{equation}\label{e:triangleT}
\mathcal{T}_g(d):=\left\{(r_i,\lambda_g)\in\R^+\times\R^+
\colon
\eqref{e:restr1-3} \hbox{ and }\eqref{e:argen} \hbox{ hold }\right\}.
\end{equation}
Under \eqref{e:restr2} if $(r_i,\lambda_g)\in \mathcal{T}_g(d)$ then  equation \eqref{e:model} admits wavefronts satisfying \eqref{e:bvs} (see Remark \ref{rem:specific1}.

\begin{figure}[htbp]
    \centering
    \begin{tabular}{cc}
    \includegraphics[width=7.2cm]{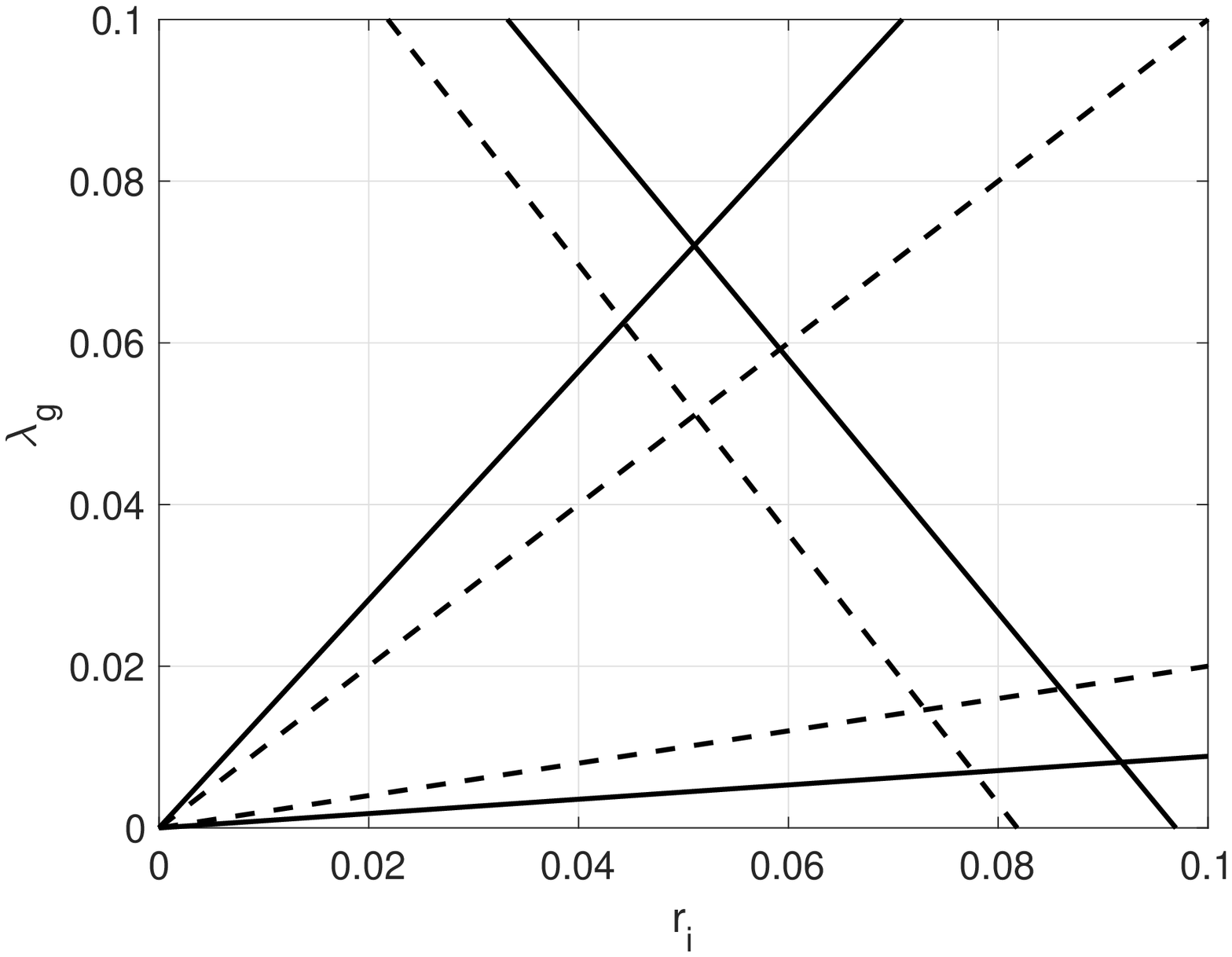}
    &
    \includegraphics[width=7.2cm]{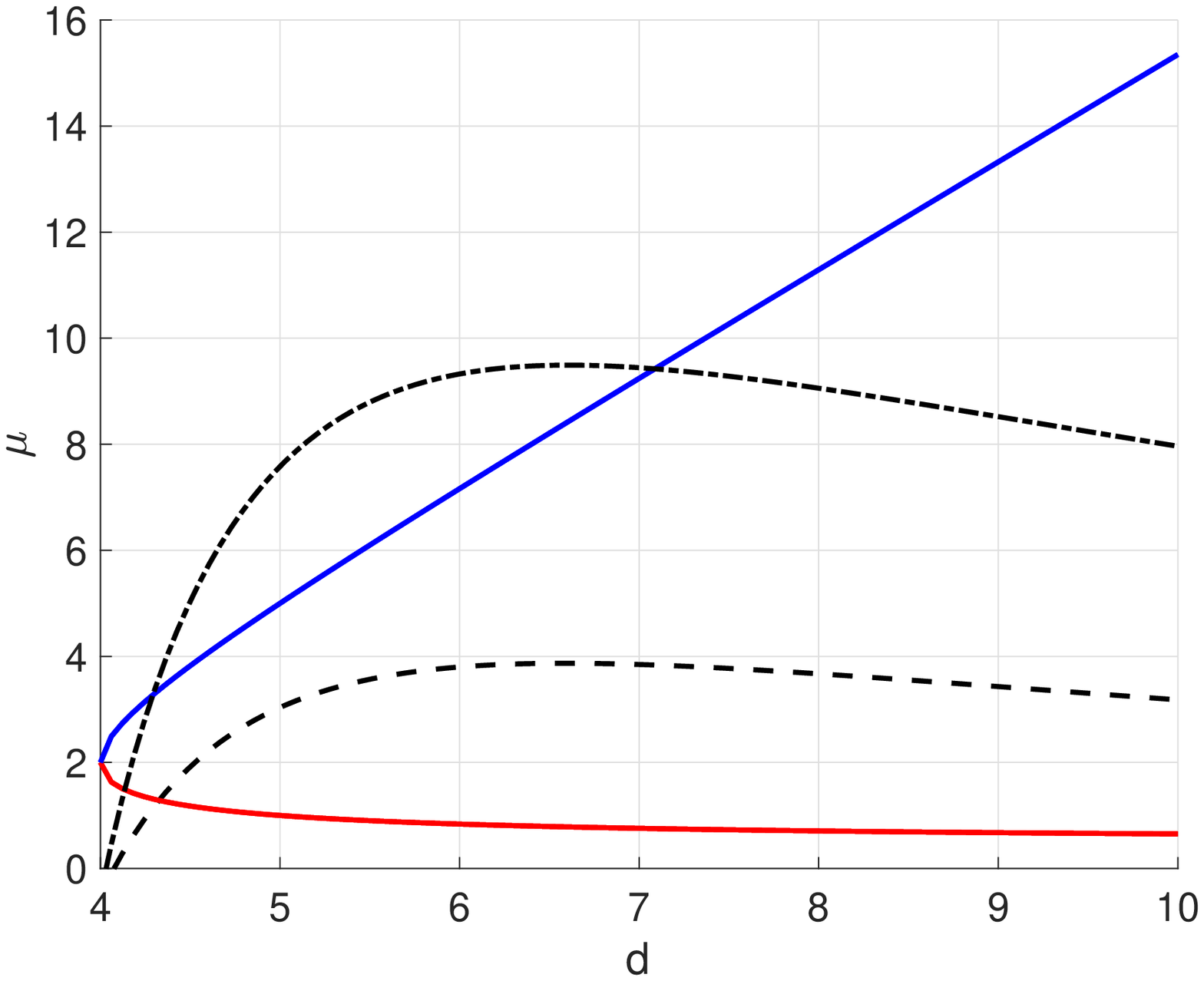}
    \end{tabular}
  \caption{On the left: the triangle $\mathcal{T}_g(d)$ is the intersection of three half-planes. Here $|C_g|\sqrt{D_g}=6$; dashed lines refer to $d=5$, solid lines to $d= 8$. On the right: The conditions in \eqref{e:restr1-3-mu} prescribe that $\mu$ must lie between the red and the blue line. Condition \eqref{e:argen} further prescribes that $\mu>0$ must belong to the region below the black line: the dashed curve refers to $E_g=40$, the dash-dotted curve to $E_g=60$.}
    \label{f:triangle}
    \end{figure}

Moreover, for such $(r_i,\lambda_g)\in \mathcal{T}_g(d)$ we have, see Figure \ref{f:lined}:

\begin{itemize}

\item if $d>4+2\sqrt{3}$ and also
\begin{equation}\label{e:ppoo}
\frac{18\sqrt{\mu (d-1)}}{\tau(\omega,\gamma,\c d)}< E_g,
\end{equation}
where $\tau$ is defined in \eqref{e:tau}, then \eqref{e:model} admits wavefronts satisfying \eqref{e:bvs} with {\em positive} speeds (see Remark \ref{rem});

\item if $d\in(4,(5+2\sqrt{3})/2)$, then {\em every} pair $(r_i,\lambda_g)\in\mathcal{T}_g(d)$ provides profiles and all of them have {\em negative} speeds (see Remark \ref{rem:negative}).

\end{itemize}

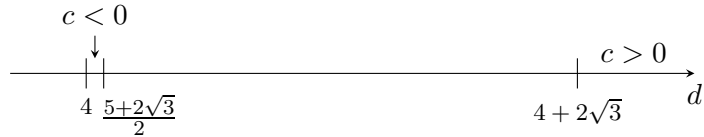
\begin{figure}[htb]
\begin{center}
\begin{tikzpicture}[>=stealth, scale=1]
\draw[->] (0,0) --  (9,0) node[below]{$d$} coordinate (x axis);
\draw(1,-0.2) node[below]{\footnotesize$4$}--(1,0.2);
\draw(1.23,-0.2) node[below=10,right=-5]{$\frac{5+2\sqrt{3}}{2}$}--(1.23,0.2);
\draw[->](1.115,0.5) node[above]{$c<0$} --(1.115,0.2);
\draw(7.46,-0.2) node[below]{\footnotesize$4+2\sqrt{3}$}--(7.46,0.2);
\draw(8.2,0) node[above]{$c>0$};
\end{tikzpicture}

\end{center}
\caption{\label{f:lined}{Thresholds leading to wavefronts with positive or negative speeds when $f$ is strictly concave. The figure is to be interpreted as follows: if $d>4+2\sqrt3\sim 7.46$, then there exist wavefronts with positive speed, while, if $4<d<(5+2\sqrt3)/2\sim 4.23$, then every wavefront has negative speed under the assumptions of Theorem \ref{t:modelc<0}.}}
\end{figure}

Second, notice that, assuming again \eqref{e:restr2}, we can interpret \eqref{e:restr1-3} and \eqref{e:argen} as relationships between $d$ and $\mu$ for fixed $E_g$, see Figure \ref{f:triangle} on the right and Corollary \ref{c:lmmk}. In this framework, profiles exist for every couple $(d,\mu)$ lying in the region between the red and blue lines, and below the black line.

\paragraph{The convex case}
If $f$ is convex, then equation \eqref{e:model} admits no such wavefronts. Indeed, we show a stronger result: wavefronts satisfying \eqref{e:bvs} never exist if $f$ is  convex just in the interval $[\alpha,\beta]$ (see Remark \ref{rem:convf}).

\paragraph{The case when $f$ changes convexity} We now consider the two cases when $f$ changes convexity; in order to simplify the analysis, we focus on the particular case when the inflection point of $f$ coincides with $\gamma$. Recall that $\gamma$ represents the Allee parameter \cite{JBMS}, which describes the threshold separating a decrease of concentration (if $u<\gamma$) from an increase of concentration (if $u>\gamma$). The assumption that $f$ has an inflection point at $\gamma$ means that the maximum drift $\dot f$ (if $f$ is convex-concave) or the minimum drift (if $f$ is concave-convex) is precisely reached at $\gamma$. We refer to Figure \ref{f:twof} for an illustration of both cases.

\begin{figure}[htb]
\begin{center}
\begin{tikzpicture}[>=stealth, scale=0.6]
\draw[->] (0,0) --  (10,0) node[below]{$u$} coordinate (x axis);
\draw[->] (0,0) -- (0,2.5) coordinate (y axis);
\draw (0,0) -- (0,-1.5);
\draw[thick,dashed] (0,1) .. controls (1,2) and (2,2) .. (3,0) node[left=4, below=0]{{\footnotesize{$\alpha$}}} node[midway,above]{$D$};
\draw[thick,dashed] (3,0) .. controls (4,-2) and (5,-2) .. (6,0);
\draw[thick,dashed] (6,0) node[right=3, below=0]{{\footnotesize{$\beta$}}} .. controls (7,2) and (8,2) .. (9,1);
\draw[dotted] (9,1) -- (9,0) node[below]{{\footnotesize{$1$}}};
\draw[thick,dashdotted] (0,0) .. controls (1.5,-2) and (3,-2) .. (4.5,0) node[left=4, above=0]{{\footnotesize{$\gamma$}}} node[midway,below]{$g$};
\draw[thick,dashdotted] (4.5,0) .. controls (6,2) and (7.5,2) .. (9,0);
\filldraw[black] (3,0) circle (2pt);
\filldraw[black] (4.5,0) circle (2pt);
\filldraw[black] (6,0) circle (2pt);
\draw[thick] (0,0) .. controls (1,-3) and (4,0) .. (4.5,1);
\draw[thick] (4.5,1) .. controls (5,2) and (7,3) .. (9,0) node[midway,above]{$f$};
\draw[dotted](4.5,0)--(4.5,1);

\begin{scope}[xshift=12cm]
\draw[->] (0,0) --  (10,0) node[below]{$u$} coordinate (x axis);
\draw[->] (0,0) -- (0,2.5) coordinate (y axis);
\draw (0,0) -- (0,-1.5);
\draw[thick,dashed] (0,1) .. controls (1,2) and (2,2) .. (3,0) node[left=4, below=0]{{\footnotesize{$\alpha$}}} node[near end,above]{$D$};
\draw[thick,dashed] (3,0) .. controls (4,-2) and (5,-2) .. (6,0);
\draw[thick,dashed] (6,0) node[right=3, below=0]{{\footnotesize{$\beta$}}} .. controls (7,2) and (8,2) .. (9,1);
\draw[dotted] (9,1) -- (9,0) node[below]{{\footnotesize{$1$}}};
\draw[thick,dashdotted] (0,0) .. controls (1.5,-2) and (3,-2) .. (4.5,0) node[left=10, above=0]{{\footnotesize{$\gamma$}}} node[midway,above]{$g$};
\draw[thick,dashdotted] (4.5,0) .. controls (6,2) and (7.5,2) .. (9,0);
\filldraw[black] (3,0) circle (2pt);
\filldraw[black] (4.5,0) circle (2pt);
\filldraw[black] (6,0) circle (2pt);
\draw[thick] (0,0) .. controls (1,3) and (3,3) .. (4.5,1) node[near end,above]{$f$};
\draw[thick] (4.5,1) .. controls (6,-1) and (7.5,-3) .. (9,0) node[midway,above]{$f$};
\draw[dotted] (4.5,0)--(4.5,1);

\end{scope}
\end{tikzpicture}

\end{center}
\caption{\label{f:twof}{Plots of the functions $D$ (dashed line), $g$ (dashdotted line) and $f$ (solid line) in the case $f$ is convex-concave (on the left) and concave-convex (on the right), with $\gamma$ as inflection point of $f$.}}
\end{figure}
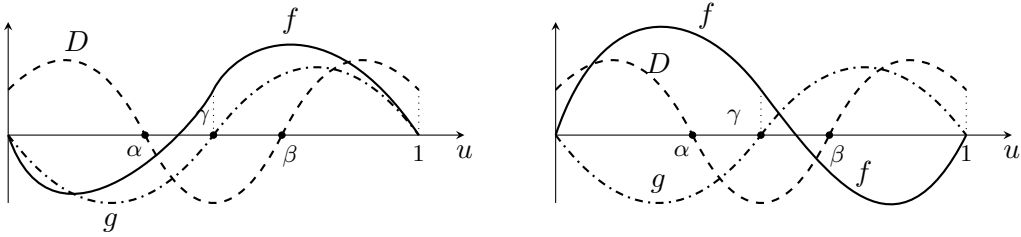

\begin{itemize}

\item Assume $f$ is convex-concave and define the set
\begin{equation}\label{e:setS}
\mathcal{S}:=\left\{(\omega, \c d) \colon  1+\frac{1}{\omega}<\c d <\frac{12(2+3\omega)}{(4+\omega)^2}\right\}.
\end{equation}
If $(\omega, sd)\in\mathcal{S}$ and estimate \eqref{e:formula} holds, then equation \eqref{e:model} has wavefronts satisfying condition \eqref{e:bvs}.

\item Assume $f$ is concave-convex and define the set
\begin{equation}\label{e:setStilde}
\tilde{\mathcal{S}}:=\left\{(\omega, \c d) \colon -\frac{16\omega}{(\omega +2)^2} <\c d<1-\frac{1}{\omega} \right\}.
\end{equation}
If $(\omega, sd)\in\tilde{\mathcal{S}}$ and estimate \eqref{e:formula2.2} holds, then equation \eqref{e:model} has wavefronts satisfying condition \eqref{e:bvs}.

\end{itemize}

\noindent We refer to Figure \ref{f:StildeS} for an illustration of the sets $\mathcal{S}$ and $\tilde{\mathcal{S}}$.
%
\begin{figure}[htbp]
    \centering
    \begin{tabular}{cc}
    \includegraphics[width=7.2cm]{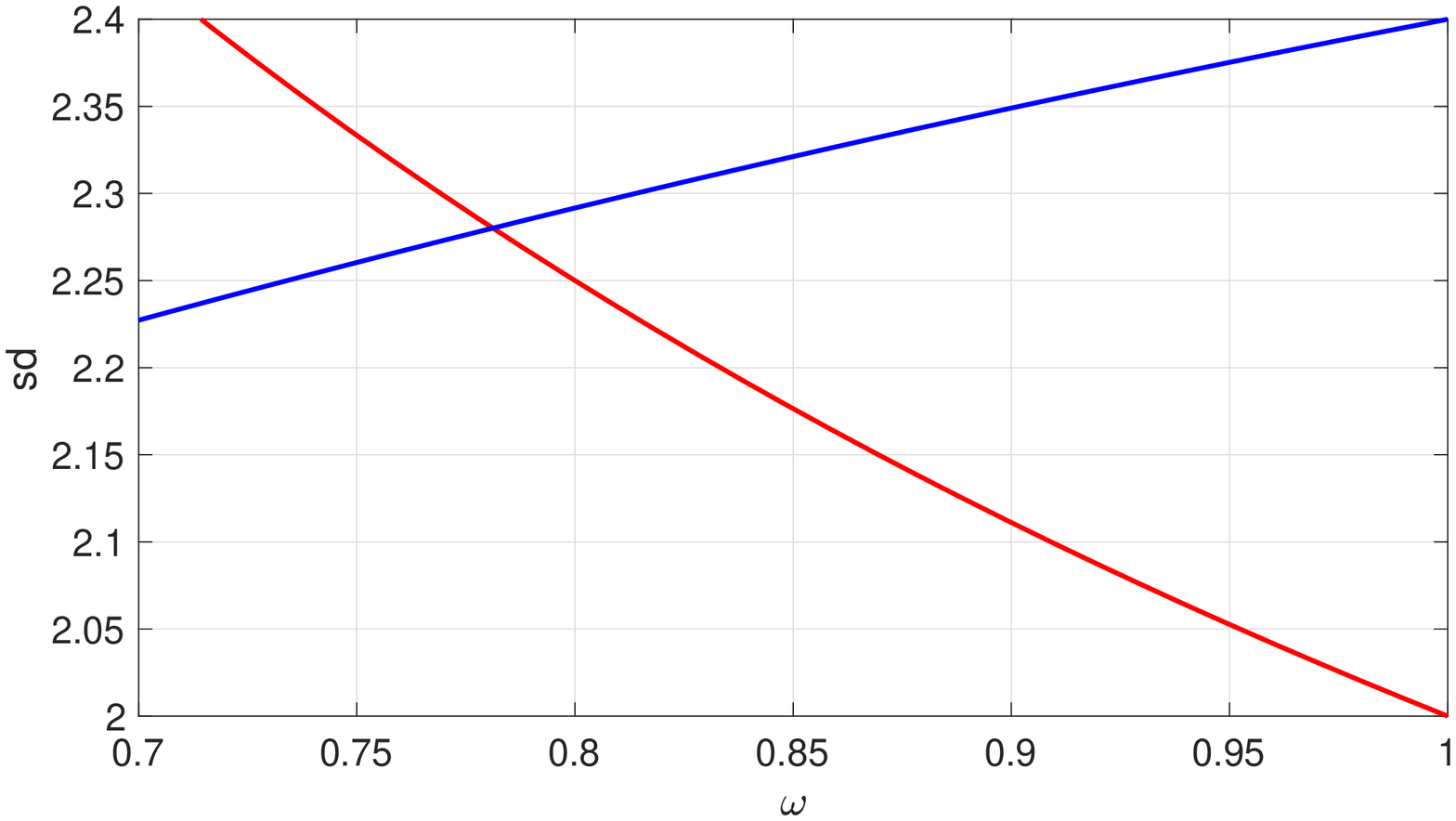}
    &
    \includegraphics[width=7.2cm]{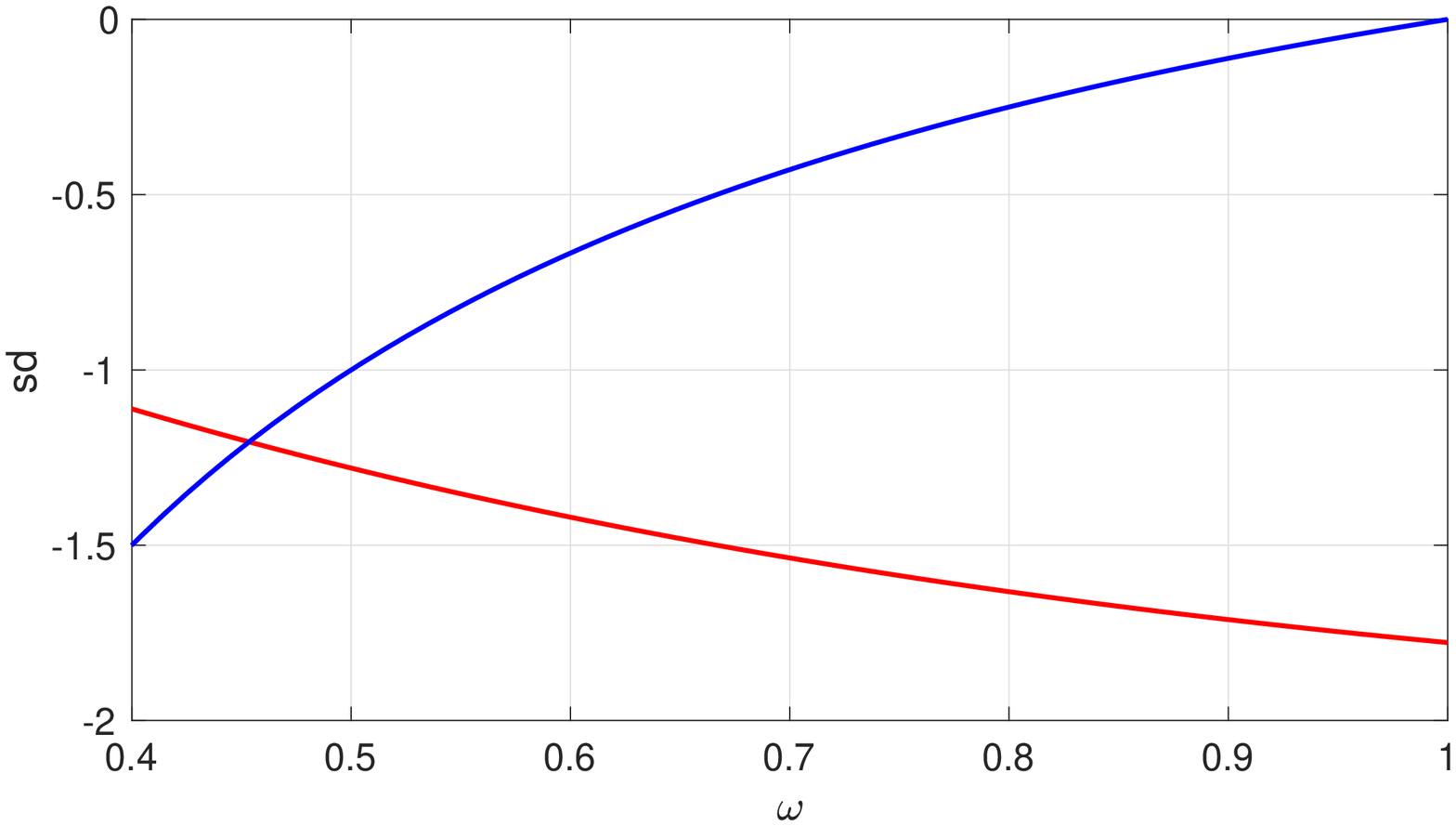}
    \end{tabular}
  \caption{The sets $\mathcal{S}$, on the left, and $\tilde{\mathcal{S}}$, on the right. The sets $\mathcal{S}$ and $\tilde{\mathcal{S}}$ are the plane regions bounded from above by the blu curve and from below by the red curve.}
    \label{f:StildeS}
    \end{figure}


\section{Theoretical results}\label{s:tools}
\setcounter{equation}{0}

In this section we provide the theoretical results that are needed for the investigation of model \eqref{e:model}.
In the following we consider equation \eqref{e:E} and we always assume \eqref{eq:condition} and {\rm (f)}, {\rm (D)}, {\rm (g)}, without any further mention.
The existence of a wavefront solution to \eqref{e:E}, whose profile satisfies \eqref{e:bvs}, is obtained by pasting profiles connecting $0$ with $\alpha$, $\alpha$ with $\gamma$, $\gamma$ with $\beta$, and $\beta$ with $1$. Each subprofile exists for $c$ larger or smaller than a certain threshold, which varies according to the subinterval: we denote them by
\[
c_{0,\alpha}^*,\quad c_{\alpha,\gamma}^*,\quad c_{\gamma,\beta}^*,\quad c_{\beta,1}^*,
\]
respectively. The expressions of these thresholds are not explicit, but we provide below rather precise estimates for them. We denote
\begin{equation}\label{e:stima-def}
c_0:=\min\{c^*_{0, \alpha}, \, c^*_{\alpha, \gamma}\} \quad \hbox{ and }\quad c_1=:\max\{c^*_{\gamma, \beta}, \, c^*_{\beta, 1}\}.
\end{equation}
In the above pasting framework, $c_0$ involves the speeds of profiles connecting $0$ with $\alpha$ and then $\alpha$ to $\gamma$, while $c_1$ refers to the connections $\gamma$ to $\beta$ and then $\beta$ to $1$. We denote with $\mathcal{J}$ the set of {\em admissible speeds}, i.e., the speeds $c$ such that there is a profile with that speed satisfying \eqref{e:bvs}.

The following main result concerns general necessary and sufficient conditions for the existence of wavefronts.

\begin{theorem}\label{t:solvability}
If
\begin{equation}\label{e:stima}
c_1<c_0,
\end{equation}
then for every $c \in (c_1, c_0)$ there are wavefronts to equation \eqref{e:E} satisfying \eqref{e:bvs}.

Conversely, if $c_1>c_0$ there exists no wavefronts to equation \eqref{e:E} satisfying \eqref{e:bvs}.
\end{theorem}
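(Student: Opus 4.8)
\emph{Proof plan.}

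The plan is to exploit the order reduction announced in the Introduction. Writing $z=z(\phi):=D(\phi)\,\phi'\!\left(\phi^{-1}(\phi)\right)$ with $\phi\in[0,1]$ the density value, a decreasing profile (so $\phi'<0$ where $0<\phi<1$) is turned into a solution of the first-order singular equation
\[
\dot z=\dot f-c-\frac{Dg}{z},\qquad \phi\in(0,1),
\]
with $\dot{}=d/d\phi$. Since $\phi'<0$ and $D$ changes sign at $\alpha,\beta$, the reduced unknown must satisfy $z<0$ on $(0,\alpha)\cup(\beta,1)$, $z>0$ on $(\alpha,\beta)$, and $z=0$ exactly at $0,\alpha,\beta,1$: continuity of the flux $D\phi'=z$ forces $z(\alpha)=z(\beta)=0$, while $z(0)=z(1)=0$ encodes \eqref{e:bvs}. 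Crucially $\gamma\in(\alpha,\beta)$ is \emph{not} a zero of $z$, because there $g$ vanishes but $D\neq0$, so the right-hand side stays regular and the profile crosses $\gamma$ with nonzero slope. Hence a wavefront exists for a given $c$ if and only if one can produce a continuous $z$ on $[0,1]$ with this sign--zero pattern solving the reduced equation (understood in the generalized sense at the four points where $z=0$).

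First I would invoke the constituent results of this section: on each subinterval $[0,\alpha],[\alpha,\gamma],[\gamma,\beta],[\beta,1]$ the associated singular problem has a solution of the prescribed sign precisely when $c$ lies on one side of the threshold $c^*_{0,\alpha},c^*_{\alpha,\gamma},c^*_{\gamma,\beta},c^*_{\beta,1}$, the two ``left'' connections existing for $c$ below their thresholds and the two ``right'' connections for $c$ above theirs. These thresholds come from the upper/lower-solution construction, together with the local analysis at each degenerate or equilibrium endpoint, where $z$ vanishes linearly like $k(\phi-\phi_0)$ near the relevant endpoint $\phi_0\in\{0,\alpha,\beta,1\}$ and $k$ solves a quadratic whose discriminant produces the threshold and into which the drift $\dot f(\phi_0)$ enters, exactly the mechanism by which convection opens the admissible window. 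With $c_0,c_1$ as in \eqref{e:stima-def}, the left pair exists for $c\le c_0$ and the right pair for $c\ge c_1$.

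For sufficiency, assume \eqref{e:stima} and fix $c\in(c_1,c_0)$; then all four subprofiles coexist and it remains to paste them into a single $z$. At $\alpha$ and $\beta$ the pasting is automatic because $z=0$ on both sides. The genuinely delicate junction, and the step I expect to be the main obstacle, is at $\gamma$: the piece on $[\alpha,\gamma]$ reaches $\gamma$ at some value $z_L(\gamma)>0$, the piece on $[\gamma,\beta]$ at some $z_R(\gamma)>0$, and these must agree for the glued function to solve the (regular) equation across $\gamma$. I would avoid prescribing this value by constructing the middle connection as one solution of the singular problem on the whole of $[\alpha,\beta]$ (with $z(\alpha)=z(\beta)=0$, $z>0$ inside), sandwiched between an ordered pair of lower and upper solutions built piecewise with the break at $\gamma$; the inequalities $c\le c^*_{\alpha,\gamma}$ and $c\ge c^*_{\gamma,\beta}$ are precisely what make these sub/supersolutions exist and match compatibly at $\gamma$, so that $z(\gamma)$ is selected by the solution rather than imposed. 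The monotone dependence of the reduced equation on $c$ (raising $c$ lowers $\dot z$) underpins the consistency of this construction across the whole interval $(c_1,c_0)$. Pasting the middle connection to the outer ones and inverting $z\mapsto\phi$ by the quadrature $d\xi=D\,d\phi/z$ returns the wavefront; the linear vanishing of $z$ at $0$ and $1$ makes this integral diverge there, so the profile is defined on all of $\R$, while $z\neq0$ elsewhere yields strict monotonicity and $C^2$-regularity off $\{\alpha,\beta\}$.

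Finally, necessity is a short contrapositive. Suppose $c_1>c_0$ and let $c$ be any candidate speed. If $c\le c_0$ then $c<c_1=\max\{c^*_{\gamma,\beta},c^*_{\beta,1}\}$, so $c$ fails one of the two right thresholds and the corresponding right connection cannot exist; if instead $c>c_0=\min\{c^*_{0,\alpha},c^*_{\alpha,\gamma}\}$, then $c$ exceeds one of the two left thresholds and a left connection fails. Either way no $c$ can carry a wavefront, so $\mathcal{J}=\emptyset$. The only point to verify is that each of the four connections is genuinely forced, that is, a wavefront restricted to a subinterval must realize the corresponding subprofile, which is immediate from the sign--zero structure of $z$ obtained in the first step.
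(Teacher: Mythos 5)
Your overall strategy coincides with the paper's: reduce to the singular first-order problem \eqref{e:P}, split it into the four subproblems on $[0,\alpha]$, $[\alpha,\gamma]$, $[\gamma,\beta]$, $[\beta,1]$ with thresholds $c^*_{0,\alpha},c^*_{\alpha,\gamma},c^*_{\gamma,\beta},c^*_{\beta,1}$, and then paste. However, there is a concrete gap in the sufficiency part: you declare the junctions at $\alpha$ and $\beta$ ``automatic because $z=0$ on both sides.'' Continuity of $z$ there is indeed trivial, but that is not the issue. To concatenate the sub-profiles in the $\xi$-variable you must show that each piece attains the values $\alpha$ and $\beta$ at a \emph{finite} $\xi$; since both $z$ and $D$ vanish at these points, $\phi'=z(\phi)/D(\phi)$ is an indeterminate form there, and if $z/D\to 0$ the profile would reach $\beta$ only as $\xi\to\pm\infty$, so no single profile connecting $1$ to $0$ would result. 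This is exactly where the paper spends its effort: it proves that $\lim_{s\to\beta^-}z(s)/D(s)\in[-\infty,0)$ (via the argument of \cite[Theorem 2.5]{CM-DPDE}), which makes the quadrature $\int D/z\,d\phi$ converge at $\alpha,\beta$ while it diverges at $0$ and $1$. Your plan records the divergence at $0,1$ but never addresses the convergence at $\alpha,\beta$, which is the nontrivial half of the pasting.

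Conversely, the junction at $\gamma$, which you single out as the main obstacle and propose to handle by a global upper/lower-solution construction on $[\alpha,\beta]$, is comparatively benign in the paper's treatment: $D(\gamma)\neq0$ and $z(\gamma)>0$, and the matching is arranged through the one-parameter family of solutions with prescribed end value $s\in[\beta(c),0)$ furnished by Lemma \ref{lem:0101}(b) (this family is also the source of the infinitely many profiles per admissible speed noted after the theorem). Your alternative is plausible but left at the level of intention; as written it does not establish that the two one-sided values at $\gamma$ can be made to agree. Finally, in the converse direction your contrapositive is fine in outline, but the step ``a wavefront restricted to a subinterval must realize the corresponding subprofile'' requires knowing that the profile is strictly decreasing off $\{\alpha,\gamma,\beta\}$ so that the inverse-function reduction $\phi\mapsto z(\phi)$ is legitimate; the paper secures this through Remark \ref{r:phi-dec} and the cited results of \cite{BCM3}, whereas you take it for granted.
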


We point out that, in the case $c_1<c_0$, there are {\em infinitely many} profiles with the {\em same} speed $c\in(c_1,c_0)$. More precisely, see Figure \ref{f:slopes}, for every such $c$ there exists $\lambda_c<0$ and a family of profiles $\phi_\lambda$, for $\lambda\in[\lambda_c,0)$, which are characterized by
\[
\phi(\xi_\gamma)= \gamma \quad \hbox{ and }\quad \phi_\lambda'(\xi_\gamma)=\lambda,
\]
for some $\xi_\gamma\in\mathbb{R}$. The first condition simply says that all profiles have been shifted so that they reach the value $\gamma$ at the same $\xi=\xi_\gamma$ (in order to make a comparison possible); the second one states that their slopes at $\xi_\gamma$ cover the cone centered at $(\xi_\gamma,\gamma)$ and opening $[\lambda_c, 0)$. We refer to \cite{BCM3} for more information.

\begin{figure}[htb]
\begin{center}

\begin{tikzpicture}[>=stealth, scale=0.5]

\draw[->] (-7,0) --  (7,0) node[below]{$\xi$} coordinate (x axis);
\draw[->] (-6,0) -- (-6,6) node[right]{$\phi$} coordinate (y axis);
\draw (-6,5) node[left=3, above]{\footnotesize$1$} -- (7,5);
\draw (-7,5) -- (-6,5);
\draw[thick] (-7,4.5) .. controls (-4,4.4) and (-3,2.6) .. (0,2.5);
\draw[dotted] (0,0) node[below]{\footnotesize{$\xi_\gamma$}} -- (0,2.5);
\draw[dotted] (-6,2.5) node[left]{\footnotesize{$\gamma$}} -- (0,2.5);
\draw[thick] (0,2.5) .. controls (3,2.4) and (4,0.6) .. (7,0.5);

\draw[thick, dashed] (-7,4.7) .. controls (-4,4.6) and (-2,3) .. (0,2.5);
\draw[thick,dashed] (0,2.5) .. controls (2,2) and (4,0.4) .. (7,0.3);

\draw[thick, dashdotted] (-7,4.9) .. controls (-4,4.9) and (-1,3.5) .. (0,2.5);
\draw[thick,dashdotted] (0,2.5) .. controls (1,1.5) and (4,0.1) .. (7,0.1);

\draw (-3,2.5) -- (3,2.5) node[right]{\footnotesize{$\lambda=0$}};
\draw (2.5,0) -- (-2.5,5) node[right=30, below=1]{\footnotesize{$\lambda=\lambda_c$}};

\end{tikzpicture}
\end{center}
\caption{\label{f:slopes}{Some profiles with the same speed $c$, in the case $c_1<c_0$ in Theorem \ref{t:solvability}}; profiles have been shifted so that $\phi(\xi_\gamma)=\gamma$.}
\end{figure}
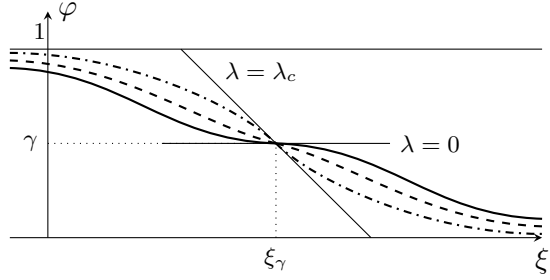

We denote the {\em difference quotient} of a scalar function of a real variable $F=F(\phi)$ with respect to a point $\phi_0$ as
\begin{equation}\label{e:dq}
\delta(F,\phi_0)(\phi) := \frac{F(\phi)-F(\phi_0)}{\phi-\phi_0}.
\end{equation}
We also introduce the integral mean of the difference quotient and denote it as
\[
\Delta(F,\phi_0)(\phi) := \frac{1}{\phi-\phi_0}\int_{\phi_0}^\phi \delta(F,\phi_0)(\psi)\, d\psi= \frac{1}{\phi-\phi_0}\int_{\phi_0}^\phi \frac{F(\psi)-F(\phi_0)}{\psi-\phi_0}\, d\psi.
\]
Notice that for every $\phi \in (0, \gamma)$ there is $\phi_1 \in (0, \gamma)$ such that
$\Delta(Dg, \alpha)(\phi)=\delta(Dg, \alpha)(\phi_1)$.
Then we have
\begin{equation}\label{e:deltaDelta}
\sup_{[0, \gamma]}\Delta(Dg, \alpha) \le \sup_{[0, \gamma]}\delta(Dg, \alpha),
\end{equation}
and the same estimate holds true in the interval $[\gamma,1]$ by replacing $\alpha$ with $\beta$.

The following results provide necessary and sufficient conditions for the existence of decreasing profiles, in order to make condition \eqref{e:stima} more explicit in terms of the functions $f$, $D$ and $g$. The proofs are deferred to the end of this section.

\begin{corollary}[Necessary condition]
\label{nec}
If there are wavefronts to equation \eqref{e:E} whose profiles satisfy \eqref{e:bvs}, then

\begin{equation}
\label{nec0}
\min\left\{\inf_{[0,\gamma]}\delta(f,\alpha), \dot{f}(\alpha)-2\sqrt{\dot{D}(\alpha)g(\alpha)}\right\} \ge \max \left\{ \sup_{[\gamma,1]}\delta(f,\beta),  \dot{f}(\beta)+2\sqrt{\dot{D}(\beta)g(\beta)}\right\}.
\end{equation}
\end{corollary}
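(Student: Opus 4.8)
The plan is to combine Theorem \ref{t:solvability} with two-sided estimates for the four pasting thresholds. By Theorem \ref{t:solvability} the existence of a wavefront forces $c_1\le c_0$, i.e., by \eqref{e:stima-def},
\[
\max\{c^*_{\gamma,\beta},\,c^*_{\beta,1}\}\le\min\{c^*_{0,\alpha},\,c^*_{\alpha,\gamma}\}.
\]
It then suffices to establish the endpoint estimates
\[
c^*_{0,\alpha}\le\dot f(\alpha)-2\sqrt{\dot D(\alpha)g(\alpha)},\qquad c^*_{\beta,1}\ge\dot f(\beta)+2\sqrt{\dot D(\beta)g(\beta)},
\]
together with the difference-quotient estimates $c^*_{0,\alpha}\le\inf_{[0,\alpha]}\delta(f,\alpha)$, $c^*_{\alpha,\gamma}\le\inf_{[\alpha,\gamma]}\delta(f,\alpha)$, and symmetrically $c^*_{\beta,1}\ge\sup_{[\beta,1]}\delta(f,\beta)$, $c^*_{\gamma,\beta}\ge\sup_{[\gamma,\beta]}\delta(f,\beta)$. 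Indeed, taking minima and maxima and using $[0,\alpha]\cup[\alpha,\gamma]=[0,\gamma]$ and $[\gamma,\beta]\cup[\beta,1]=[\gamma,1]$, these give $c_0\le\min\{\inf_{[0,\gamma]}\delta(f,\alpha),\,\dot f(\alpha)-2\sqrt{\dot D(\alpha)g(\alpha)}\}$ and $c_1\ge\max\{\sup_{[\gamma,1]}\delta(f,\beta),\,\dot f(\beta)+2\sqrt{\dot D(\beta)g(\beta)}\}$, whence \eqref{nec0} follows by chaining through $c_1\le c_0$.

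All estimates rest on the order-reduced equation $\dot z=\dot f-c-Dg/z$ for $z(\phi)=D(\phi)\phi'\bigl(\phi^{-1}(\phi)\bigr)$, together with the sign pattern fixed by {\rm (D)} and {\rm (g)}: since the profile is decreasing, $z<0$ on $(0,\alpha)\cup(\beta,1)$ where $D>0$, $z>0$ on $(\alpha,\beta)$ where $D<0$, and $z$ vanishes at $0,\alpha,\beta,1$. For the difference-quotient bounds I would integrate the reduced equation across each subinterval toward the degenerate endpoint. For example, on the connection $\alpha\to\gamma$, integrating from $\alpha$ and using $z(\alpha)=0$ gives $z(\phi)=(\phi-\alpha)\bigl(\delta(f,\alpha)(\phi)-c\bigr)-\int_\alpha^\phi (Dg/z)\,d\psi$; since $D<0$, $g<0$ and $z>0$ there, the integral is positive, while $z(\phi)>0$ and $\phi-\alpha>0$, forcing $\delta(f,\alpha)(\phi)>c$ for all $\phi\in(\alpha,\gamma)$ and hence $c^*_{\alpha,\gamma}\le\inf_{[\alpha,\gamma]}\delta(f,\alpha)$. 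The remaining three follow from the identical computation, reading off the sign of $Dg/z$ on the relevant subinterval.

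The square-root estimates come from the local analysis at the degenerate zeros $\alpha,\beta$ of $D$. Seeking a linearly vanishing solution $z(\phi)\sim m(\phi-\alpha)$ and using $D(\phi)\sim\dot D(\alpha)(\phi-\alpha)$ with $g(\phi)\to g(\alpha)$, the leading balance in $\dot z=\dot f-c-Dg/z$ yields $m^2-(\dot f(\alpha)-c)m+\dot D(\alpha)g(\alpha)=0$. As $\dot D(\alpha)<0$ and $g(\alpha)<0$, the product of the roots is positive, so a real slope needs $(\dot f(\alpha)-c)^2\ge 4\dot D(\alpha)g(\alpha)$; moreover the admissible branch has $m>0$ (since $z<0$ and $\phi-\alpha<0$ on $(0,\alpha)$), which makes both roots positive and their sum $\dot f(\alpha)-c$ positive. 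Together these give $c\le\dot f(\alpha)-2\sqrt{\dot D(\alpha)g(\alpha)}$, hence the claimed bound on $c^*_{0,\alpha}$; the symmetric argument at $\beta$, where $\dot D(\beta)>0$, $g(\beta)>0$ and the admissible slope is negative, gives $c^*_{\beta,1}\ge\dot f(\beta)+2\sqrt{\dot D(\beta)g(\beta)}$.

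The main obstacle is making the endpoint asymptotics rigorous: near $\alpha$ and $\beta$ the term $Dg/z$ is of the indeterminate form $0/0$, so one must genuinely control the rate at which $z$ vanishes rather than merely postulate a linear behavior. This is where a comparison argument on the reduced equation, or the upper- and lower-solution machinery of this section, is needed, both to justify the linearization and to confirm that the admissible speeds of each connection fill a half-line, so that $c^*$ is exactly the supremum (for the left connections) or infimum (for the right connections) of that set, as the chaining above requires.
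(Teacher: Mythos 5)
Your proof is correct and follows essentially the same route as the paper: existence forces $c_1\le c_0$ by Theorem \ref{t:solvability}, and \eqref{nec0} follows by chaining the one-sided threshold estimates through $c_1\le c_0$ via \eqref{e:stima-def}. The only difference is that the paper simply invokes the bounds \eqref{e:sub-estimates} (i.e.\ \eqref{e:stima c0alfa1}--\eqref{e:stima betauno}, which were already established in Proposition \ref{prop:solvability} as consequences of \eqref{e:estimc12} in Lemma \ref{lem:0101}), whereas you rederive them. Your integral argument for the difference-quotient bounds is sound and self-contained; your derivation of the square-root bounds, however, rests on postulating that $z$ vanishes linearly at $\alpha$ and $\beta$, which --- as you yourself note --- is exactly the nontrivial content of Lemma \ref{lem:0101}(b), imported from the cited references, so there is no need to reprove it here. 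One small simplification worth noting: for a purely necessary condition you can apply your inequalities directly to the speed $c$ of the given wavefront (which is admissible for all four subproblems simultaneously), so the question of whether each connection's admissible set fills a closed half-line, which worries you in your last paragraph, can be sidestepped entirely.
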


In particular, wavefronts exist only if both conditions
\begin{align}
\label{nec1}
\dot{f}(\alpha) -\dot{f}(\beta) &\ge 2\sqrt{\dot{D}(\alpha)g(\alpha)} +  2\sqrt{\dot{D}(\beta)g(\beta)},
\\
\label{nec2}
\inf_{[0,\gamma]}\delta(f,\alpha) - \sup_{[\gamma,1]}\delta(f,\beta)&\ge0,
\end{align}
are satisfied. Notice that inequality \eqref{nec1} separates the behavior of $f$ from that of $Dg$.

\begin{remark}\label{r:lhswith f concava}
When $f$ is strictly concave we have
\begin{equation}\label{e:pozz1}
\inf_{[0, \gamma]} \delta(f, \alpha)=\delta(f, \alpha)(\gamma)=\frac{f(\gamma)-f(\alpha)}{\gamma-\alpha}, \quad \sup_{[ \gamma, 1]} \delta(f, \beta)=\delta(f, \beta)(\gamma)=\frac{f(\gamma)-f(\beta)}{\gamma-\beta},
\end{equation}
and then
\begin{equation}\label{e:pozz2}
\inf_{[0, \gamma]}\delta(f, \alpha)-\sup_{[ \gamma, 1]} \delta(f, \beta)=\frac{f(\gamma)-f(\alpha)}{\gamma-\alpha}-\frac{f(\gamma)-f(\beta)}{\gamma-\beta}>0.
\end{equation}
\end{remark}

The following result shows, in particular, how far from zero must be the difference in \eqref{nec2} in order to have solutions.

\begin{corollary}[Sufficient condition]\label{t:CS}
We have the following results.

\begin{itemize}

\item Assume

\begin{equation}\label{e:ssigma}
\inf_{[0,\gamma]}\delta\left(f,\alpha\right) -\sup_{[\gamma,1]}\delta\left(f,\beta\right)
>
2\sup_{[0,\gamma]}\sqrt{\Delta\left(Dg,\alpha\right)} + 2\sup_{[\gamma,1]}\sqrt{\Delta\left(Dg,\beta\right)}.
\end{equation}
Then, Equation \eqref{e:E} admits wavefronts satisfying \eqref{e:bvs}, and $\mathcal{J}$ is a bounded interval.

\item We have
\begin{enumerate}[{ (i)}]
\item either $\mathcal{J} \subset(0,\infty)$ or $\mathcal{J}=\emptyset$, in the case
\begin{equation}
\label{e:positive speeds}
\max\left\{\sup_{[\gamma,1]}\delta(f,\beta),   \dot{f}(\beta) + 2\sqrt{\dot{D}(\beta)g(\beta)}\right\}>0;
\end{equation}

\item
either $\mathcal{J} \subset(-\infty,0)$ or $\mathcal{J}=\emptyset$, in the case
\begin{equation}
\label{e:negative speeds}
\min\left\{\inf_{[0,\gamma]}\delta(f,\alpha),   \dot{f}(\alpha) - 2\sqrt{\dot{D}(\alpha)g(\alpha)}\right\}<0.
\end{equation}
\end{enumerate}
\end{itemize}
\end{corollary}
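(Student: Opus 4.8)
The plan is to deduce the whole statement from Theorem~\ref{t:solvability} together with the two-sided bounds on the thresholds $c_0$ and $c_1$ (defined in \eqref{e:stima-def}) produced by the subinterval analysis of this section. Concretely, the order reduction $z(\phi):=D(\phi)\phi'\!\left(\phi^{-1}(\phi)\right)$ turns \eqref{e:ODE} into the first-order singular equation $z\dot z+(c-\dot f)z+Dg=0$, and the upper/lower-solution construction on each of the four subintervals yields, besides the upper bound on $c_0$ and the lower bound on $c_1$ already used in Corollary~\ref{nec}, the complementary estimates
\[
c_0\ \ge\ \inf_{[0,\gamma]}\delta(f,\alpha)-2\sup_{[0,\gamma]}\sqrt{\Delta(Dg,\alpha)},\qquad
c_1\ \le\ \sup_{[\gamma,1]}\delta(f,\beta)+2\sup_{[\gamma,1]}\sqrt{\Delta(Dg,\beta)}.
\]
I would first record these inequalities together with their Corollary~\ref{nec} counterparts $c_0\le\min\{\inf_{[0,\gamma]}\delta(f,\alpha),\,\dot f(\alpha)-2\sqrt{\dot D(\alpha)g(\alpha)}\}$ and $c_1\ge\max\{\sup_{[\gamma,1]}\delta(f,\beta),\,\dot f(\beta)+2\sqrt{\dot D(\beta)g(\beta)}\}$; these four bounds are the only analytic input, and everything that follows is inequality chasing.

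For the first bullet, I read \eqref{e:ssigma} as
\[
\inf_{[0,\gamma]}\delta(f,\alpha)-2\sup_{[0,\gamma]}\sqrt{\Delta(Dg,\alpha)}
\ >\
\sup_{[\gamma,1]}\delta(f,\beta)+2\sup_{[\gamma,1]}\sqrt{\Delta(Dg,\beta)}.
\]
The left-hand side is $\le c_0$ and the right-hand side is $\ge c_1$ by the two displayed estimates, so \eqref{e:ssigma} forces $c_1<c_0$, i.e.\ \eqref{e:stima}; Theorem~\ref{t:solvability} then provides a wavefront for every $c\in(c_1,c_0)$. To see that $\mathcal{J}$ is a bounded interval I would use the pasting description preceding Theorem~\ref{t:solvability}: a speed $c$ is admissible exactly when all four subprofiles exist, which by the definition \eqref{e:stima-def} of $c_0,c_1$ happens iff $c_1\le c\le c_0$ (endpoints aside). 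Hence $\mathcal{J}$ coincides with $(c_1,c_0)$ up to its endpoints, an interval that is bounded because $c_0,c_1$ are finite (the data $f,D,g$ being $C^1$ on $[0,1]$).

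For (i) and (ii) I would recognize \eqref{e:positive speeds} and \eqref{e:negative speeds} as sign conditions on $c_1$ and $c_0$. Indeed the lower bound $c_1\ge\max\{\sup_{[\gamma,1]}\delta(f,\beta),\,\dot f(\beta)+2\sqrt{\dot D(\beta)g(\beta)}\}$ shows that \eqref{e:positive speeds} yields $c_1>0$; since every admissible speed satisfies $c\ge c_1$, this gives $\mathcal{J}\subset(0,\infty)$ whenever $\mathcal{J}\ne\emptyset$. Symmetrically, the upper bound $c_0\le\min\{\inf_{[0,\gamma]}\delta(f,\alpha),\,\dot f(\alpha)-2\sqrt{\dot D(\alpha)g(\alpha)}\}$ together with \eqref{e:negative speeds} gives $c_0<0$, and since every admissible speed satisfies $c\le c_0$ we obtain $\mathcal{J}\subset(-\infty,0)$ or $\mathcal{J}=\emptyset$.

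The substance of the argument therefore lies entirely in the threshold estimates, and the step I expect to be delicate is the lower bound for $c_0$ (and dually the upper bound for $c_1$), where the factor $2\sqrt{\Delta(Dg,\cdot)}$ appears: it arises from a completing-the-square/discriminant estimate for the quadratic nonlinearity $z\dot z$ in the reduced equation, with $\Delta(Dg,\cdot)$ entering as the integral mean of $Dg$ controlling the lower solution. By contrast, once these bounds are in hand the corollary reduces to a direct chain of inequalities combined with Theorem~\ref{t:solvability}; if one prefers more explicit estimates, \eqref{e:deltaDelta} may be invoked to replace $\Delta$ by $\delta$ in the final bounds.
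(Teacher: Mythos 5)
Your proposal is correct and follows essentially the same route as the paper: the two estimates you record for $c_0$ and $c_1$ are exactly the quantities $s_{0,\gamma}$ and $\Sigma_{\gamma,1}$ that the paper derives from the four subinterval thresholds (Proposition \ref{prop:solvability} and Lemma \ref{cor:key}), after which the first bullet is the observation that \eqref{e:ssigma} forces $c_1<c_0$, and (i), (ii) are the same sign-chasing on $c_1$ and $c_0$ via \eqref{e:2-0}. The boundedness of $\mathcal{J}$, which you justify from $\mathcal{J}\subset[c_1,c_0]$ with finite endpoints, is handled implicitly in the paper by the same mechanism.
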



In the proof of Theorem \ref{t:solvability} we will reduce the existence of a wavefront to equation \eqref{e:E} satisfying  \eqref{e:bvs} to the investigation of a solution $z$ to the following {\em singular} first-order problem in the interval $[0,1]$:
\begin{equation}
\label{e:P}
\begin{cases}
\dot{z}(\phi)=\dot{f}(\phi) -c- \frac{D(\phi)g(\phi)}{z(\phi)} \ &\mbox{ in } \ (0,\alpha)\cup (\alpha,\beta)\cup (\beta, 1),\\
z<0 \ &\mbox{ in } \ (0,\alpha)\cup(\beta, 1),\\
z>0 \ &\mbox{ in } \ (\alpha,\beta),\\
z(0)=z(\alpha)=z(\beta)=z(1)=0.
\end{cases}
\end{equation}
By a solution to \eqref{e:P} we mean a function $z(\phi)$ which is continuous on $[0,1]$ and satisfies the equation \eqref{e:P}$_1$ in integral form, i.e.,
\[
z(\phi)=f(\phi)-c\phi-\int_{0}^{\phi}\frac{D(\sigma)g(\sigma)}{z(\sigma)}\, d\sigma, \qquad \phi \in [0, 1].
\]
Notice that we exploited here the assumption $f(0)=0$. It is clear that such a $z$ belongs to $C^1\left((0,1)\setminus\{\alpha, \beta \}\right)$.
To solve problem \eqref{e:P} we divide it into four subproblems, which correspond to the subintervals $[0, \alpha]$,  $[\alpha, \gamma]$,  $[\gamma, \beta]$ and $[\beta, 1]$ of the interval $[0,1]$. In order to have a unified treatment of any of these problems, we now collect results from \cite[Lemma 4.1, Corollary 4.1, Remark 4.1]{BCM3} for the problem
\begin{equation}
\label{e:problem0101}
\begin{cases}
\dot z(\phi) = h(\phi)-c- \frac{Q(\phi)}{z(\phi)}, \ &\phi \in (\sigma_1,\sigma_2),\\
z(\phi)<0, \ &\phi \in (\sigma_1, \sigma_2).
\end{cases}
\end{equation}

\begin{lemma}
\label{lem:0101}
Let $h$ and $Q$ be continuous functions on $[\sigma_1, \sigma_2]$, with $Q >0$ in $(\sigma_1, \sigma_2)$ and $Q(\sigma_1)=Q(\sigma_2)=0$. Then we have:
\begin{enumerate}[(a)]

\item For any $c \in \R$ there exists a unique $\zeta_c \in C^0[\sigma_1,\sigma_2] \cap C^1\left(\sigma_1,\sigma_2\right)$ satisfying \eqref{e:problem0101} and $\zeta_c(\sigma_2)=0$.
\item Denote $c^*(\sigma_1,\sigma_2):=\sup \left\{c\in \R: \zeta_c(\sigma_1)<0\right\}\in (-\infty,\infty]$.
If $c^*(\sigma_1,\sigma_2) < \infty$, then for every $c>c^*(\sigma_1,\sigma_2)$, there exists $\beta(c) \in (-\infty,0)$ such that there is a unique $z_{c,s} \in C^0[\sigma_1,\sigma_2]\cap C^1\left(\sigma_1,\sigma_2\right]$ satisfying \eqref{e:problem0101}, $z_{c,s}(\sigma_1)=0$, $z_{c,s}(\sigma_2)=s<0$, if and only if $s \ge \beta(c)$.
Moreover, we have
\begin{multline}\label{e:estimc12}
 \max \left\{\sup_{(\sigma_1, \sigma_2]}\delta\left(f, \sigma_1\right), h(\sigma_1)+2\sqrt{\dot{Q}(\sigma_1)}\right\} \le  c^*(\sigma_1,\sigma_2) \le
\\
  \sup_{(\sigma_1, \sigma_2]}\delta\left(f, \sigma_1\right) + 2 \sup_{ (\sigma_1, \sigma_2]}\sqrt{\Delta(Q, \sigma_1)},
\end{multline}
where $f(\phi):=\int_{0}^{\phi}h(\sigma)\, d\sigma, \, \phi \in [0,1]$.

\item If $\dot{Q}(0)$ exists, then $c^*(\sigma_1,\sigma_2)$ is finite.

\end{enumerate}
\end{lemma}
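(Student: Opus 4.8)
The statement collects three facts about the singular first-order problem \eqref{e:problem0101}, and the paper obtains them from \cite{BCM3}; the route I would follow to reprove them is the local analysis at the two singular endpoints combined with the upper- and lower-solution method. The entire difficulty is concentrated at $\sigma_1$ and $\sigma_2$, where $Q$ vanishes and one prescribes $z=0$, so that the term $Q/z$ is a $0/0$ indeterminacy; away from the endpoints one has $z<0$ and the right-hand side of \eqref{e:problem0101} is locally Lipschitz in $z$, so the classical Cauchy theory applies and the only genuine issues are at the two ends.

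First I would carry out the local analysis by inserting the ansatz $z(\phi)\sim -m\,(\sigma_2-\phi)$ near $\sigma_2$ and $z(\phi)\sim -n\,(\phi-\sigma_1)$ near $\sigma_1$. Substituting into \eqref{e:problem0101} and using $Q(\phi)\sim \dot Q(\sigma_i)(\phi-\sigma_i)$ gives the algebraic relations
\[
m^2-\big(h(\sigma_2)-c\big)m+\dot Q(\sigma_2)=0,\qquad
n^2+\big(h(\sigma_1)-c\big)n+\dot Q(\sigma_1)=0 .
\]
Since $\dot Q(\sigma_2)\le 0$ the first equation has exactly one admissible (positive) root $m$, whereas, since $\dot Q(\sigma_1)\ge 0$, the second has two positive roots precisely when $c\ge h(\sigma_1)+2\sqrt{\dot Q(\sigma_1)}$. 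This asymmetry is the structural heart of the lemma: the endpoint $\sigma_2$ admits a unique admissible slope, which yields the unique trajectory $\zeta_c$ of part (a), constructed by continuing backward from $\sigma_2$ (local existence and uniqueness there being obtained by a contraction for the regular unknown $w=-z/(\sigma_2-\phi)$, then propagated by the Cauchy theory as long as $z<0$); the endpoint $\sigma_1$ instead admits a one-parameter family of admissible trajectories, which is exactly the family $z_{c,s}$ of part (b), naturally parametrized by the exit value $s=z(\sigma_2)$.

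For part (b) I would first establish the monotone dependence $c_1<c_2\Rightarrow \zeta_{c_1}<\zeta_{c_2}$ on $(\sigma_1,\sigma_2)$ by comparison: enlarging $c$ lowers both the right-hand side and the admissible slope $m$ at $\sigma_2$, so the trajectories stay ordered. Hence $\{c:\zeta_c(\sigma_1)<0\}$ is a half-line and $c^*$ is well defined; for $c>c^*$ the backward trajectory $\zeta_c$ reaches $0$ before $\sigma_1$, which is what forces admissible solutions to instead emanate from $\sigma_1$, producing the family $z_{c,s}$. The admissible range $s\in[\beta(c),0)$ and the threshold $\beta(c)$ then follow from the continuous and monotone dependence on $s$ of the forward solutions issued from $\sigma_1$, which are ordered by their right endpoint value, $\beta(c)$ being the most negative value for which $z<0$ persists up to $\sigma_2$. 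The lower bound in \eqref{e:estimc12} comes from two necessary conditions: integrating \eqref{e:problem0101} from $\sigma_1$ gives $z(\phi)=\big(f(\phi)-f(\sigma_1)\big)-c(\phi-\sigma_1)+\int_{\sigma_1}^{\phi}(-Q/z)$, and since $-Q/z>0$ the requirement $z<0$ forces $c\ge \sup_{(\sigma_1,\sigma_2]}\delta(f,\sigma_1)$, while the endpoint analysis forces $c\ge h(\sigma_1)+2\sqrt{\dot Q(\sigma_1)}$. The upper bound is the one genuinely delicate estimate: I would exhibit an explicit lower solution vanishing at $\sigma_1$, verify the associated differential inequality, and recover $\sup\sqrt{\Delta(Q,\sigma_1)}$ upon integrating the $Q/z$ term and balancing the resulting quadratic — this is exactly where $\Delta$, rather than $\delta$, enters.

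Finally, part (c) is immediate from \eqref{e:estimc12}: if $\dot Q(0)$ exists then $\delta(Q,\sigma_1)$ is bounded near $\sigma_1$, hence so is its integral mean $\Delta(Q,\sigma_1)$, and the right-hand side of \eqref{e:estimc12} is finite, so $c^*<\infty$. The main obstacle throughout is the singular endpoints: proving local existence and, above all, uniqueness at $\sigma_2$ where Picard–Lindelöf fails, and producing the sharp barrier that yields the $2\sup\sqrt{\Delta(Q,\sigma_1)}$ term; everything else reduces to comparison and continuous dependence, which are routine once the behavior at the two ends is understood.
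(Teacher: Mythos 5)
You should first note that the paper itself does not prove Lemma \ref{lem:0101}: it is explicitly a collection of results imported from \cite[Lemma 4.1, Corollary 4.1, Remark 4.1]{BCM3}, with the threshold estimates \eqref{e:estimc12} credited to \cite{Marcelli-Papalini}. Measured against those sources, your overall strategy --- comparison/upper--lower solutions together with a slope analysis at the two degenerate endpoints --- is the right one, and your treatment of the lower bound in \eqref{e:estimc12} (integrating the equation from $\sigma_1$ and using $-Q/z>0$, plus the discriminant condition for the quadratic at $\sigma_1$) and of part (c) is correct.

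There are, however, concrete gaps. First, the assertion that for $c>c^*$ the trajectory $\zeta_c$ ``reaches $0$ before $\sigma_1$'' is false: a solution of \eqref{e:problem0101} that is negative on a right neighbourhood of an interior point $\bar\phi$ cannot vanish at $\bar\phi$, since $Q(\bar\phi)>0$ forces $\dot z\to+\infty$ there while the difference quotients of $z$ from the right are negative. What actually happens for $c\ge c^*$ is $\zeta_c(\sigma_1)=0$; the family $z_{c,s}$ comes from the failure of forward uniqueness at the degenerate point $(\sigma_1,0)$, not from $\zeta_c$ stopping short. Second, your local analysis rests entirely on the linear ansatz $z\sim-m(\sigma_2-\phi)$, $z\sim-n(\phi-\sigma_1)$, hence on the existence of $\dot Q(\sigma_1)$ and $\dot Q(\sigma_2)$, whereas parts (a) and the first half of (b) assume only continuity of $Q$; both the contraction argument for $w=-z/(\sigma_2-\phi)$ and the ``two positive roots'' dichotomy need replacements that do not presuppose linear vanishing (for uniqueness at $\sigma_2$ a direct comparison works: $v=z_1-z_2$ satisfies $\dot v=\frac{Q}{z_1z_2}\,v$ with positive coefficient, which is incompatible with $v(\sigma_2)=0$ unless $v\equiv0$). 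Third, the two genuinely hard steps --- that the set of admissible terminal values $s$ is a full interval $[\beta(c),0)$ with finite negative endpoint, and the upper bound $c^*\le\sup_{(\sigma_1,\sigma_2]}\delta(f,\sigma_1)+2\sup_{(\sigma_1,\sigma_2]}\sqrt{\Delta(Q,\sigma_1)}$ --- are announced as plans (``I would exhibit an explicit lower solution\dots'') rather than carried out; the latter is precisely the content of \cite{Marcelli-Papalini} and is where the real work lies. As it stands the proposal is a plausible reconstruction of the architecture of the proof, but not a proof.
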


Conditions \eqref{e:estimc12} also exploit estimates on the threshold speeds recently proposed in  \cite{Marcelli-Papalini}. With the help of Lemma \ref{lem:0101},  in the proof of the following proposition we analyze the subproblems we mentioned above.

\begin{proposition}\label{prop:solvability}
Problem \eqref{e:P} is solvable if $c_1 < c_0$ and it is not solvable if $c_1>c_0$; in the former case we have $c\in [c_1, c_0]$.

Estimates for the thresholds $c_{0,\alpha}^*$, $c_{\alpha,\gamma}^*$, $c_{\gamma,\beta}^*$, $c_{\beta,1}^*$ are provided by \eqref{e:stima c0alfa1}, \eqref{e:stima calphagamma1}, \eqref{e:stima gammabeta}, \eqref{e:stima betauno}, respectively.
\end{proposition}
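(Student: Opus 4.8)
The plan is to split the singular problem \eqref{e:P} into the four scalar problems obtained by restricting $\phi$ to $[0,\alpha]$, $[\alpha,\gamma]$, $[\gamma,\beta]$ and $[\beta,1]$, to solve each of them through Lemma~\ref{lem:0101}, and then to paste the four pieces at $\alpha$, $\gamma$ and $\beta$. The starting observation is that on each subinterval both the sign of $z$ and the sign of $Dg$ are fixed: from {\rm (D)} and {\rm (g)} one has $z<0$, $Dg<0$ on $(0,\alpha)$; $z>0$, $Dg>0$ on $(\alpha,\gamma)$; $z>0$, $Dg<0$ on $(\gamma,\beta)$; $z<0$, $Dg>0$ on $(\beta,1)$; while $Dg$ vanishes at $0,\alpha,\gamma,\beta,1$ and is the only quantity in \eqref{e:P} changing sign across $\alpha$, $\gamma$, $\beta$.

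First I would bring each of the four problems to the normal form \eqref{e:problem0101}, in which Lemma~\ref{lem:0101} demands $z<0$ and $Q>0$. On $[\beta,1]$ this already holds with $h=\dot f$ and $Q=Dg$. On the remaining intervals I would compose the elementary involutions $z\mapsto -z$ and a reflection of the independent variable: on $[0,\alpha]$ the reflection $\psi=\alpha-\phi$ turns $Dg<0$ into $Q=-Dg>0$ and replaces $c$ by $-c$; on $[\alpha,\gamma]$ the change $w=-z$ keeps $Q=Dg>0$ and again sends $c$ to $-c$; on $[\gamma,\beta]$ the composition $w(\psi)=-z(\gamma+\beta-\psi)$ gives $Q=-Dg>0$ and leaves $c$ unchanged. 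In every case $Q>0$ in the open interval and vanishes at the endpoints, and $\dot Q$ exists at the endpoint lying over a zero of $D$ (recall $\dot D(\alpha)<0$, $\dot D(\beta)>0$, so that $\dot Q=\dot D\,g>0$ there); hence Lemma~\ref{lem:0101}(c) applies and the four thresholds $c^*_{0,\alpha}$, $c^*_{\alpha,\gamma}$, $c^*_{\gamma,\beta}$, $c^*_{\beta,1}$ are finite. The quantitative bounds asserted in the statement are then nothing but \eqref{e:estimc12}, written for each transformed problem and pulled back through the involutions.

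Next I would read off the orientation of the admissible speeds and paste. The one-sided structure originates at the degenerate endpoint: inserting $z\sim\dot z(\sigma)(\phi-\sigma)$ and $Dg\sim\dot D(\sigma)g(\sigma)(\phi-\sigma)$ near a zero $\sigma\in\{\alpha,\beta\}$ of $D$ shows that $\dot z(\sigma)$ must solve $\dot z(\sigma)^2-(\dot f(\sigma)-c)\dot z(\sigma)+\dot D(\sigma)g(\sigma)=0$ with the sign prescribed by the monotonicity of $z$; since $\dot D(\sigma)g(\sigma)>0$, a real root of the required sign exists exactly when $c\le\dot f(\alpha)-2\sqrt{\dot D(\alpha)g(\alpha)}$ at $\alpha$ and when $c\ge\dot f(\beta)+2\sqrt{\dot D(\beta)g(\beta)}$ at $\beta$, in agreement with the lower bounds in \eqref{e:estimc12}. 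Thus the pieces on $[0,\alpha]$ and $[\alpha,\gamma]$ exist for $c\le c^*_{0,\alpha}$ and $c\le c^*_{\alpha,\gamma}$, and those on $[\gamma,\beta]$ and $[\beta,1]$ for $c\ge c^*_{\gamma,\beta}$ and $c\ge c^*_{\beta,1}$. At $\alpha$ and $\beta$ the two adjacent pieces both vanish, so only continuity of $z$ is needed, consistently with the profile being allowed to fail $C^2$ where $D=0$; at $\gamma$, instead, $g(\gamma)=0$ forces $Dg(\gamma)=0$, so $\dot z(\gamma)=\dot f(\gamma)-c$ from either side and only the value $z(\gamma)>0$ must be matched. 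By part~(b) of Lemma~\ref{lem:0101}, for $c$ strictly past the relevant thresholds the families $z_{c,s}$ emanating from $\alpha$ and from $\beta$ realize $z(\gamma)$ in intervals of the form $(0,V]$; two such intervals always meet near $0^+$, so a common value can be selected. This free choice is exactly the one-parameter family of profiles with equal speed of Figure~\ref{f:slopes}.

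Putting the four half-lines together, a solution of \eqref{e:P} exists if and only if some $c$ satisfies $c\le c_0=\min\{c^*_{0,\alpha},c^*_{\alpha,\gamma}\}$ and $c\ge c_1=\max\{c^*_{\gamma,\beta},c^*_{\beta,1}\}$ as in \eqref{e:stima-def}; this is the interval $[c_1,c_0]$, nonempty precisely when $c_1\le c_0$. Hence for every $c\in(c_1,c_0)$ all four subproblems are strictly solvable and paste, which gives solvability when $c_1<c_0$ with admissible speeds confined to $[c_1,c_0]$, while no $c$ works when $c_1>c_0$; this is the assertion, and it underlies Theorem~\ref{t:solvability}. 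The step I expect to be the genuine obstacle is the matching at $\gamma$: one must show that the attainable values $z(\gamma)$ from the left and from the right are really nondegenerate intervals $(0,V]$ that overlap, which relies on the monotone dependence of $z_{c,s}$ on $s$ together with a careful control of the singular limit $s\to 0^-$ at the degenerate endpoints $\alpha$ and $\beta$, where the right-hand side of \eqref{e:P} blows up; by comparison, the reductions to Lemma~\ref{lem:0101} and the translation of the estimates are essentially bookkeeping.
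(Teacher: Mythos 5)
Your proposal follows essentially the same route as the paper's proof: the same four-interval decomposition at $\alpha$, $\gamma$, $\beta$, the same reflections and sign changes reducing each piece to the normal form \eqref{e:problem0101}, the same appeal to Lemma~\ref{lem:0101} and to \eqref{e:estimc12} for the existence and estimation of the four thresholds, and the same conclusion that a common admissible speed exists precisely on $[c_1,c_0]$. Your explicit discussion of the matching at $\gamma$ (overlapping intervals $(0,V]$ of attainable values $z(\gamma)$, and the automatic $C^1$ pasting there since $D(\gamma)g(\gamma)=0$) fills in a detail the paper leaves implicit by deferring to \cite{BCM3}, but it is consistent with, not a departure from, the paper's argument.
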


\begin{proof}
The proof analyzes the restriction of problem \eqref{e:P} to the four above intervals.

\paragraph{\em Case $[0,\alpha]$.} For $\phi\in[0,\alpha]$ we define
$$
h_1(\phi)=:-\dot{f}(-\phi+\alpha), \quad D_1(\phi)=:D(-\phi+\alpha),\quad g_1(\phi)=:-g(-\phi+\alpha).
$$
We also define $w(\phi):= z(-\phi+\alpha)$ and $\tilde c_1:=-c$. Then, when restricted to the interval $[0,\alpha]$, problem \eqref{e:P} is equivalent to
\begin{equation}\label{e:S1}
\begin{cases}
\dot{w}=h_1-\tilde c_1- D_1g_1/w \ &\mbox{ in } \ (0, \alpha),\\
w<0 \ &\mbox{ in } \ (0, \alpha),\\
w(0)=w(\alpha)=0.
\end{cases}
\end{equation}
Lemma \ref{lem:0101} applies with $\sigma_1=0$, $\sigma_2 = \alpha$ and $Q=D_1g_1$: since $Q$ is differentiable in $0$, then Lemma \ref{lem:0101} provides a threshold $\tilde c^*_{0, \alpha}$ such that  \eqref{e:S1} is solvable iff $\tilde c_1\ge \tilde c^*_{0, \alpha}$, i.e. $c\le -\tilde c_{0,\alpha}^*=: c_{0,\alpha}^*$. By \eqref{e:estimc12} we obtain
\begin{equation}
\label{e:stimaBase}
\max\left\{
\sup_{(0,\alpha]}\delta(f_1,0), h_1(0)+2\sqrt{\dot{D}_1(0)g_1(0)}\right\}
\le \tilde c^*_{0, \alpha} \le
\sup_{(0, \alpha]}\delta(f_1,0) + 2\sup_{(0,\alpha]}\sqrt{\Delta(D_1g_1,0)},
\end{equation}
with
$$
f_1(\phi)=\int_{0}^{\phi}h_1(\sigma)\, d\sigma=\int_{0}^{\phi}-\dot{f}(-\sigma+\alpha)\, d\sigma=-\int_{\alpha-\phi}^{\alpha}\dot{f}(s)\, ds=f(\alpha-\phi)-f(\alpha),
$$
whence
$$
\sup_{s\in (0, \alpha]}\delta(f_1,0)(s)= \sup_{s\in (0, \alpha]}\frac{f_1(s)}{s}=\sup_{s\in [0, \alpha)}\frac{f(s)-f(\alpha)}{\alpha-s}.
$$
Moreover we have
$$
\int_0^s \frac{D_1(\phi)g_1(\phi)}{\phi}\, d\phi=\int_0^s -\frac{D(-\phi+\alpha)g(-\phi+\alpha)}{\phi}\, d\phi=\int_{\alpha-s}^{\alpha}-\frac{D(\sigma)g(\sigma)}{\alpha-\sigma} \, d\sigma.
$$
Then formula \eqref{e:stimaBase} can be written as
\begin{equation}\label{e:stima c0alfa}
\max\left\{\sup_{[0, \alpha)}\left[-\delta(f,\alpha)\right],
-\dot{f}(\alpha) +2\sqrt{\dot{D}(\alpha)g(\alpha)}\right\} \le \tilde c^*_{0, \alpha}\le
 \sup_{[0, \alpha)}\left[-\delta(f,\alpha)\right] +2\sup_{[0, \alpha)}\sqrt{\Delta(Dg,\alpha)}.
\end{equation}
Hence,
\begin{equation}\label{e:stima c0alfa1}
\inf_{[0, \alpha)}\delta (f,\alpha)-2\sup_{[0, \alpha)}\sqrt{\Delta(Dg,\alpha)}
\le c^*_{0, \alpha}\le
\min\left\{\inf_{[0, \alpha)}\delta (f,\alpha),
\dot f(\alpha) -2\sqrt{\dot{D}(\alpha)g(\alpha)}\right\}.
\end{equation}

\paragraph{\em Case $[\alpha, \gamma]$.} We denote
$$
h_2(\phi):=-\dot{f}(\phi), \quad D_2(\phi):=-D(\phi), \quad g_2(\phi)=:-g(\phi).
$$
We also define $w(\phi): = -z(\phi)$ and $c_2:=-c$. Then problem \eqref{e:P}, when restricted to the interval $[\alpha,\gamma]$, becomes
 \begin{equation}\label{e:S2}
\begin{cases}
\dot{w}=h_2-c_2- D_2g_2/w \ &\mbox{ in } \ (\alpha, \gamma),\\
w<0 \ &\mbox{ in } \ (\alpha, \gamma],\\
w(\alpha)=0.
\end{cases}
\end{equation}
By Lemma \ref{lem:0101} we deduce the existence of a threshold $\tilde c^*_{\alpha, \gamma}$ such that \eqref{e:S2} is solvable iff $c_2\ge \tilde c^*_{\alpha, \gamma}$, i.e. $c\le -\tilde c^*_{\alpha, \gamma}=: c^*_{\alpha, \gamma}$. Moreover,
by \eqref{e:estimc12} we deduce
\[
\max\left\{
\sup_{(\alpha, \gamma]}\delta(f_2,\alpha), h_2(\alpha)+2\sqrt{\dot {D}_2(\alpha)g_2(\alpha)}
\right\}
\le \tilde c^*_{\alpha, \gamma}
\le
\sup_{(\alpha, \gamma]}\delta(f_2, \alpha)+2\sup_{(\alpha, \gamma]}\sqrt{\Delta(D_2g_2,\alpha)},
\]
where
$$
f_2(\phi):=\int_{\alpha}^{\phi} h_2(\sigma)\, d\sigma=-f(\phi)+f(\alpha), \, \phi \in [\alpha, \gamma].
$$
Whence, by returning to the variables $h,D,g$, we find
\begin{equation}\label{e:stima calphagamma}
\max\left\{ \sup_{(\alpha, \gamma]}\left\{-\delta(f,\alpha)\right\},
- \dot{f}(\alpha)+2\sqrt{\dot {D}(\alpha)g(\alpha)}\right\} \le \tilde c^*_{\alpha, \gamma}
\le
\sup_{(\alpha, \gamma]}\left\{-\delta(f,\alpha)\right\}+2\sup_{(\alpha, \gamma]}\sqrt{\Delta(Dg,\alpha)}.
\end{equation}
Hence,
\begin{equation}\label{e:stima calphagamma1}
\inf_{(\alpha, \gamma]}\delta(f,\alpha) - 2\sup_{(\alpha, \gamma]}\sqrt{\Delta(Dg,\alpha)}
\le
 c^*_{\alpha, \gamma}
\le
\min\left\{ \inf_{(\alpha, \gamma]}\delta(f,\alpha),
 \dot{f}(\alpha)-2\sqrt{\dot {D}(\alpha)g(\alpha)}\right\}.
\end{equation}
\paragraph{\em Case $[\gamma, \beta]$.} For $\phi\in[\gamma, \beta]$ we define
$$
h_3(\phi):=\dot{f}(-\phi+\gamma+\beta), \quad D_3(\phi)=:-D(-\phi+\gamma+\beta),\quad g_3(\phi)=:g(-\phi+\gamma+\beta).
$$
We also denote $w(\phi):= -z(-\phi+\gamma+\beta)$. Then in the interval $[\gamma,\beta]$ problem \eqref{e:P} can be written as
\begin{equation}\label{e:S3}
\begin{cases}
\dot{w}=h_3-c- D_3g_3/w \ &\mbox{ in } \ (\gamma, \beta),\\
w<0 \ &\mbox{ in } \ [\gamma, \beta),\\
w(\beta)=0.
\end{cases}
\end{equation}
By Lemma \ref{lem:0101} problem \eqref{e:S3} is solvable iff $c\ge c_{\gamma,\beta}^*$, for some threshold
$c^*_{\gamma, \beta}$. Upper and lower estimates for $c_{\gamma,\beta}^*$ can be obtained, as in the previous cases, by applying \eqref{e:estimc12}. In conclusion we find the estimates
\begin{equation}\label{e:stima gammabeta}
\max\left\{ \sup_{[\gamma, \beta)}\delta(f,\beta),
\dot{f}(\beta)+2\sqrt{\dot {D}(\beta)g(\beta)}\right\} \le c^*_{\gamma, \beta}
\le
\sup_{[\gamma, \beta)}\delta(f,\beta) + 2\sup_{[\gamma, \beta)}\sqrt{\Delta(Dg,\beta)}.
\end{equation}

\paragraph{\em Case  $[\beta, 1]$.} In this case we directly apply Lemma \ref{lem:0101}: the problem
   \begin{equation}\label{e:S4}
\begin{cases}
\dot{z}=\dot{f}-c- Dg/z \ &\mbox{ in } \ (\beta, 1),\\
z<0 \ &\mbox{ in } \ (\beta, 1),\\
z(\beta)=z(1)=0,
\end{cases}
\end{equation}
is solvable iff $c\ge c^*_{\beta, 1}$, for some $c_{\beta,1}^*$. Again, estimates for $c^*_{\beta, 1}$ are deduced by \eqref{e:estimc12}:
\begin{equation}\label{e:stima betauno}
\max\left\{\sup_{(\beta, 1]}\delta(f,\beta),
\dot{f}(\beta)+2\sqrt{\dot {D}(\beta)g(\beta)}\right\} \le c^*_{\beta, 1}
\le
\sup_{(\beta, 1]}\delta(f,\beta) + 2\sup_{(\beta, 1]}\sqrt{\Delta(Dg,\beta)}.
\end{equation}

\noindent This concludes the analysis of the restrictions of problem \eqref{e:P} to the four above intervals. Condition $c_1\le c_0$  is the requirement that there is a common admissible speed $c$ for the above subproblems. In this case $c \in [c_1, c_0]$.
\end{proof}

\begin{remark}\label{r:phi-dec}
{\rm
Since $D$ and $g$ vanish in the interior of none of the above sub-intervals, one finds $\phi' <0$ if $\phi \in (0,1) \setminus\{\alpha, \gamma, \beta\}$ (see \cite[Proposition 3.1(ii)]{BCM3}). Moreover, by \cite[Theorem 2.9 (i)]{CM-DPDE}, we deduce that the profile never reaches the value $1$ for a finite value of $\xi$; the same result holds for the value $0$, by exploiting again \cite[Theorem 2.9 (i)]{CM-DPDE} after the change of variables that led to \eqref{e:S1}. At last, we have $\phi'(\gamma)<0$ by the second part of the proof of Proposition 3.1(ii) in \cite{BCM3}. As a consequence, the profile $\phi$ is strictly monotone.
}
\end{remark}

\begin{proofof}{Theorem \ref{t:solvability}}
The proof follows an argument based on the reduction of \eqref{e:E}-\eqref{e:bvs} to \eqref{e:P}, see \cite{BCM3}.

\smallskip

First, assume $c_1<c_0$. We argue separately in the four sub-intervals where $Dg \neq 0$ and then we put together what we found. Thus, let $z$ be the solution of \eqref{e:P} associated to some $c\in [c_1, c_0]$. Define $\phi_{1,\beta}$, $\phi_{\beta,\gamma}$, $\phi_{\gamma, \alpha}$ and $\phi_{\alpha,0}$ as the solutions of
\begin{equation}
\label{e:equation}
\phi'=\frac{z(\phi)}{D(\phi)},
\end{equation}
with the initial data (respectively)
\[
\phi_{1,\beta}(0)=\frac{1+\beta}{2}, \quad \phi_{\beta,\gamma}=\frac{\beta+\gamma}{2},\quad \phi_{\gamma, \alpha}(0)=\frac{\gamma+\alpha}{2}, \quad \phi_{\alpha,0}(0)=\frac{\alpha}{2}.
\]
Since the right-hand side of \eqref{e:equation} is locally of class $C^1$, then $\phi_{1,\beta}$, $\phi_{\beta,\gamma}$, $\phi_{\gamma, \alpha}$ and $\phi_{\alpha,0}$, exist and are unique in their respective maximal existence intervals.

We focus on the pasting of $\phi_{1,\beta}$, $\phi_{\beta,\gamma}$ at $\beta$. Let $\phi_{1,\beta}$, $\phi_{\beta,\gamma}$ be maximally defined in $(\xi_1, \xi_\beta^1) \subset \R$, $(\xi_\beta^2, \xi_\gamma^1)\subset \R$, with
\[
-\infty \le \xi_1 <0 < \xi_\beta^1 \le \infty, \quad -\infty \le \xi_\beta^2 < 0 < \xi_\gamma^1 \le \infty,
\]
and satisfying
\begin{gather*}
\lim_{\xi \to \xi_1^+} \phi_{1,\beta}(\xi)=1, \ \lim_{\xi \to \{\xi_\beta^1\}^-}\phi_{1,\beta}(\xi)=\beta,
\quad
\hbox{ and }
\quad
\lim_{\xi \to \{\xi_\beta^2\}^+} \phi_{\beta,\gamma}(\xi)=\beta \ \lim_{\xi \to \{\xi_\gamma^1\}^-}\phi_{\beta,\gamma}(\xi)=\gamma.
\end{gather*}
In order to glue together $\phi_{1,\beta}$ and $\phi_{\beta,\gamma }$ (after space shifts), we need to prove $\xi_\beta^1 \in \R$ and $\xi_\beta^2 \in \R$.
We have
\[
\lim_{\xi \to \{\xi_\beta^2\}^+}\phi_{\beta,\gamma}'(\xi)=\lim_{\xi \to \{\xi_\beta^2\}^+}\frac{z\left(\phi_{\beta,\gamma}(\xi)\right)}{D\left(\phi_{\beta,\gamma}(\xi)\right)}=\lim_{s\to \beta^-}\frac{z(s)}{D(s)}=\lim_{t\to \gamma^+}\frac{w(t)}{D_3(t)},
\]
with $w$ and $D_3$ as in \eqref{e:S3}. The last limit is essentially discussed in the proof of \cite[Theorem 2.5]{CM-DPDE};  the only difference is that the interval  $[0, 1]$ appearing there is now replaced by   $[\gamma, \beta]$. Reasoning as there we obtain that
\[
\lim_{t\to \gamma^+}\frac{w(t)}{D_3(t)}\in [-\infty, 0);
\]
hence  $\xi_\beta^2$ is a real value.
%
With a similar reasoning, this time directly applied to $z(\phi_{1,\beta})$ and $D(\phi_{1,\beta})$, we can prove that also $\xi_\beta^2$ is a real value.

The remaining pastings are exactly proved as in the proof of \cite[Proposition 3.2]{BCM3} and we refer the reader to that paper for details. To this aim, in particular, we need that  $z(\gamma)>0$, which is satisfied when $c_1<c_0$ by Proposition \ref{prop:solvability}.  The proof of the first statement is complete.

\smallskip

We now prove the second statement. Suppose that  \eqref{e:E}-\eqref{e:bvs} admits a profile $\phi$ associated to some speed $c\in \R$. In particular, $\phi$ is decreasing and hence it can be decomposed into sub-profiles $\phi_{1,\beta}$, $\phi_{\beta,\gamma}$, $\phi_{\gamma,\alpha}$ and $\phi_{0,\beta}$ connecting, respectively, $\beta$ to $1$, $\gamma$ to $\beta$, and so on. By Remark \ref{r:phi-dec} we have $\phi' <0$ if $\phi \in (0,1) \setminus\{\alpha, \gamma, \beta\}$. Therefore, $\phi_{1,\beta}$ is invertible for $\phi_{1,\beta} \in (\beta,1)$, $\phi_{\beta,\gamma}$ is invertible for $\phi_{\beta,\gamma} \in (\gamma, \beta)$, and so on. Let $\zeta=\zeta(\phi): (\beta,1) \to \R$ be the inverse function of $\phi_{1,\beta}$, and set
\[
z(\phi):=D(\phi)\phi_{1,\beta}'\left(\zeta(\phi)\right), \ \mbox{ for } \ \phi \in (\beta,1).
\]
By direct computations, the function $z$ solves $\eqref{e:P}_1$ in $(\beta,1)$ (where $z \in C^1$) and, by adapting \cite[Lemma 3.1]{BCM3}, it can be extended to a function of class $C^0[\beta,1]$, still called $z$. Also, as in \cite[Lemma 3.1]{BCM3}, we have $z(1)=0$. Arguing similarly in the other sub-intervals, one finds that $z \in C^0[0,1]$ is in $C^1$ in $(0,\alpha)\cup (\alpha, \beta) \cup (\beta,1)$ and it satisfies \eqref{e:P}. For more details we refer to the similar case presented in the proof of \cite[Proposition 3.1 {\em (ii)}]{BCM3}, which applies because $g$ satisfies \cite[(2.2)]{BCM3}. According to Proposition \ref{prop:solvability} we obtain $c_1\le c_0$ and then also the second statement is proved.
\end{proofof}

\begin{remark}\label{r:exw-fconcave}
{\rm We now provide a simple argument showing why wavefronts should exist for suitable concave $f$, in the case the drift $\dot f$ is first positive and then negative. For $\lambda>0$, let $f$ be defined by $\lambda u$ in $(0,\gamma)$ and $-\lambda(u-2\gamma)$ in $(\gamma,1)$, so that $f$ is Lipschitz continuous with $\dot f=\lambda$ in $(0,\gamma)$ and $\dot f=-\lambda$ in $(\gamma,1)$. In this case, the role of $\lambda$ is to shift to the right (of magnitude $+\lambda$) the estimates for $c_0$, as \eqref{e:stima c0alfa1} and \eqref{e:stima calphagamma1} show, and to shift to the left (of $-\lambda$) the estimates for $c_1$ (see \eqref{e:stima gammabeta} and \eqref{e:stima betauno}). Hence, \eqref{e:stima} holds true for $\lambda$ large enough.
}
\end{remark}

We denote by $s_{0,\alpha}$, $s_{\alpha, \gamma}$, $s_{\gamma, \beta}$, $s_{\beta, 1}$, the lower bounds in \eqref{e:stima c0alfa1}, \eqref{e:stima calphagamma1}, \eqref{e:stima gammabeta}, \eqref{e:stima betauno}, respectively, and with
$\Sigma_{0,\alpha}$, $\Sigma_{\alpha, \gamma}$, $\Sigma_{\gamma, \beta}$, $\Sigma_{\beta, 1}$,
the corresponding upper bounds. In other words we rewrite \eqref{e:stima c0alfa1}, \eqref{e:stima calphagamma1}, \eqref{e:stima gammabeta}, \eqref{e:stima betauno} as
\begin{equation}
\label{e:sub-estimates}
s_{0,\alpha}\le c^*_{0,\alpha}\le \Sigma_{0,\alpha}, \quad s_{\alpha, \gamma}\le c^*_{\alpha, \gamma}\le \Sigma_{\alpha, \gamma}, \quad s_{\gamma, \beta}\le c^*_{\gamma, \beta}\le \Sigma_{\gamma, \beta}, \quad s_{\beta, 1}\le c^*_{\beta, 1}\le \Sigma_{\beta, 1}.
\end{equation}
Define moreover
\[
s_{0,\gamma}:=\inf_{[0,\gamma]}\delta(f,\alpha) - 2\sup_{[0, \gamma]}\sqrt{\Delta(Dg,\alpha)}
\quad\hbox{ and }\quad
\Sigma_{\gamma, 1}:=\sup_{[\gamma, 1]}\delta(f,\beta) + 2\sup_{[\gamma, 1]}\sqrt{\Delta(Dg,\beta)}.
\]
Here above, the arguments of the supremums are not defined at $\alpha$ and $\beta$, respectively; of course, since $f,D,g\in C^1$, we understand them as $-\dot f (\alpha), \, \dot D(\alpha)g(\alpha), \, \dot f (\beta)$ and $\dot D(\beta)g(\beta)$, respectively. Under this notation we immediately deduce the following result.

\begin{lemma}\label{cor:key}
If $\Sigma_{\gamma, 1} < s_{0, \gamma}$,
then condition \eqref{e:stima}  is satisfied.
\end{lemma}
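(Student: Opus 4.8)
The plan is to combine the interval estimates \eqref{e:sub-estimates} with a single monotonicity observation: enlarging the interval over which an infimum of a difference quotient is taken can only decrease it, while enlarging the interval over which a supremum is taken can only increase it. Since $c_0=\min\{c^*_{0,\alpha},c^*_{\alpha,\gamma}\}$ and $c_1=\max\{c^*_{\gamma,\beta},c^*_{\beta,1}\}$, condition \eqref{e:stima} reads $c_1<c_0$; hence it suffices to produce a lower bound for $c_0$ and an upper bound for $c_1$, and then to separate them using the hypothesis $\Sigma_{\gamma,1}<s_{0,\gamma}$.

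First I would bound $c_0$ from below. From \eqref{e:sub-estimates} one has $c_0\ge\min\{s_{0,\alpha},s_{\alpha,\gamma}\}$, where $s_{0,\alpha}$ and $s_{\alpha,\gamma}$ are the lower bounds in \eqref{e:stima c0alfa1} and \eqref{e:stima calphagamma1}. Since $[0,\alpha)\subset[0,\gamma]$ and $(\alpha,\gamma]\subset[0,\gamma]$, the infimum of $\delta(f,\alpha)$ over each of these subintervals is at least $\inf_{[0,\gamma]}\delta(f,\alpha)$, while the supremum of $\sqrt{\Delta(Dg,\alpha)}$ over each of them is at most $\sup_{[0,\gamma]}\sqrt{\Delta(Dg,\alpha)}$. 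Comparing with the definition of $s_{0,\gamma}$ gives $s_{0,\alpha}\ge s_{0,\gamma}$ and $s_{\alpha,\gamma}\ge s_{0,\gamma}$, whence $c_0\ge s_{0,\gamma}$.

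Symmetrically, I would bound $c_1$ from above. From \eqref{e:sub-estimates} one has $c_1\le\max\{\Sigma_{\gamma,\beta},\Sigma_{\beta,1}\}$, with $\Sigma_{\gamma,\beta}$ and $\Sigma_{\beta,1}$ the upper bounds in \eqref{e:stima gammabeta} and \eqref{e:stima betauno}. Because $[\gamma,\beta)\subset[\gamma,1]$ and $(\beta,1]\subset[\gamma,1]$, both $\sup\delta(f,\beta)$ and $\sup\sqrt{\Delta(Dg,\beta)}$ over these subintervals are dominated by the corresponding suprema over $[\gamma,1]$; by the definition of $\Sigma_{\gamma,1}$ this yields $\Sigma_{\gamma,\beta}\le\Sigma_{\gamma,1}$ and $\Sigma_{\beta,1}\le\Sigma_{\gamma,1}$, so that $c_1\le\Sigma_{\gamma,1}$. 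Chaining the two bounds with the hypothesis gives $c_1\le\Sigma_{\gamma,1}<s_{0,\gamma}\le c_0$, which is exactly \eqref{e:stima}.

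There is no serious obstacle here; the argument is a pure set-monotonicity comparison, which is why the statement was announced as an immediate consequence of the preceding estimates. The only point requiring a little care is the bookkeeping at the degeneracy points $\alpha$ and $\beta$: the difference quotients $\delta(f,\alpha)$, $\delta(f,\beta)$ and the averaged quotients $\Delta(Dg,\alpha)$, $\Delta(Dg,\beta)$ must be read through their continuous extensions (equal to $-\dot f(\alpha)$, $\dot f(\beta)$, $\dot D(\alpha)g(\alpha)$, $\dot D(\beta)g(\beta)$ at the respective endpoints, as noted just before the statement), so that the half-open subintervals appearing in \eqref{e:stima c0alfa1}--\eqref{e:stima betauno} and the closed intervals $[0,\gamma]$, $[\gamma,1]$ used in the definitions of $s_{0,\gamma}$ and $\Sigma_{\gamma,1}$ are genuinely comparable.
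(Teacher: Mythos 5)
Your proposal is correct and follows essentially the same route as the paper: both bound $c_1\le\max\{\Sigma_{\gamma,\beta},\Sigma_{\beta,1}\}\le\Sigma_{\gamma,1}$ and $c_0\ge\min\{s_{0,\alpha},s_{\alpha,\gamma}\}\ge s_{0,\gamma}$ via the estimates \eqref{e:sub-estimates} and the monotonicity of infima and suprema under enlargement of the interval, then chain these with the hypothesis. You merely make explicit the set-inclusion comparisons that the paper leaves implicit.
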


\begin{proof}
According to the right-hand sides of the estimates \eqref{e:stima gammabeta} and \eqref{e:stima betauno} we have
$c_1\le \max\{\Sigma_{\gamma, \beta}, \, \Sigma_{\beta, 1}\} \le \Sigma_{\gamma,1}$.
By the assumption $\Sigma_{\gamma, 1} < s_{0, \gamma}$ we obtain
\[
c_1\le  \Sigma_{\gamma,1}< s_{0, \gamma}
\le \min\{s_{0, \alpha}, \, s_{\alpha, \gamma}\}.
\]
Because of \eqref{e:stima c0alfa1} and \eqref{e:stima calphagamma1} we deduce $c_1<\min\{s_{0, \alpha}, \, s_{\alpha, \gamma}\}\le \min\{c^*_{0, \alpha}, c^*_{\alpha, \gamma} \}=c_0$.
\end{proof}

\begin{proofof}{Corollary \ref{nec}}
If a wavefront exists, then necessarily $c_1 \le c_0$ because of Theorem \ref{t:solvability}. Then, by \eqref{e:stima-def} and \eqref{e:sub-estimates} it follows
\begin{multline}
\max \left\{\sup_{[\gamma,1]}\delta(f,\beta), \dot{f}(\beta)+2\sqrt{\dot{D}(\beta)g(\beta)}\right\} =
\\
\max\{s_{\gamma,\beta}, \, s_{\beta,1}\} \le c_1 \le c_0 \le  \min\{\Sigma_{0,\alpha},\, \Sigma_{\alpha,\gamma}\} =
\\
\min\left\{\inf_{[0,\gamma]}\delta(f,\alpha), \dot{f}(\alpha)-2\sqrt{\dot{D}(\alpha)g(\alpha)}\right\},
\label{e:2-0}
\end{multline}
which is \eqref{nec0}.
\end{proofof}

\begin{proofof}{Corollary \ref{t:CS}}
First, notice that \eqref{e:ssigma} is exactly $\Sigma_{\gamma, 1} < s_{0, \gamma}$ after trivial manipulations. As a consequence, Theorem \ref{t:solvability} and Corollary \ref{cor:key}. imply the existence of wavefronts.

To obtain \eqref{e:positive speeds}, we impose $\max\{s_{\gamma,\beta}, \, s_{\beta,1}\} >0$ in \eqref{e:2-0}, which in turn implies that $c_1>0$; notice that the left-hand side in \eqref{e:2-0} is precisely the left-hand side of \eqref{e:positive speeds}. Analogously, to obtain \eqref{e:negative speeds}, we impose $\min\{\Sigma_{0,\alpha},\, \Sigma_{\alpha,\gamma}\} <0$. This implies $c_0 <0$.
\end{proofof}

We now investigate when the set $\mathcal{J}$ of admissible speeds contains positive values.

\begin{lemma} \label{l:sooof} Assume
\begin{equation}\label{e:c>0}
\inf_{[0, \gamma]}\delta(f, \alpha)>2\sup_{[0, \gamma]}\sqrt{\Delta(Dg, \alpha)}.
\end{equation}
Then either $\mathcal{J}=\emptyset$ or $\mathcal{J}\cap (0, +\infty)\ne \emptyset$.
\end{lemma}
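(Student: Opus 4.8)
The plan is to show that under hypothesis \eqref{e:c>0} the lower estimate $s_{0,\gamma}$ for the left cluster of thresholds is strictly positive, which will force the candidate interval of admissible speeds to lie in $(0,+\infty)$ whenever it is nonempty. Recall that by definition
\[
s_{0,\gamma}=\inf_{[0,\gamma]}\delta(f,\alpha) - 2\sup_{[0, \gamma]}\sqrt{\Delta(Dg,\alpha)},
\]
so that assumption \eqref{e:c>0} says precisely $s_{0,\gamma}>0$. First I would combine this with the chain of inequalities already established inside the proof of Lemma \ref{cor:key}: there we saw that $s_{0,\gamma}\le \min\{s_{0,\alpha},\,s_{\alpha,\gamma}\}$, and by the lower bounds in \eqref{e:stima c0alfa1} and \eqref{e:stima calphagamma1} (i.e. the left-hand sides of \eqref{e:sub-estimates}) we have $s_{0,\alpha}\le c^*_{0,\alpha}$ and $s_{\alpha,\gamma}\le c^*_{\alpha,\gamma}$. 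Hence $0<s_{0,\gamma}\le \min\{c^*_{0,\alpha},\,c^*_{\alpha,\gamma}\}=c_0$, so that $c_0>0$.

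Next I would invoke Theorem \ref{t:solvability} together with Proposition \ref{prop:solvability}: if $\mathcal{J}\ne\emptyset$, then a wavefront exists, and by the necessary part of Theorem \ref{t:solvability} (equivalently, by Proposition \ref{prop:solvability}) the admissible speeds satisfy $c\in[c_1,c_0]$ with $c_1\le c_0$. Thus every admissible speed is at most $c_0$; but this only bounds $\mathcal{J}$ from above and does not immediately place any element in $(0,+\infty)$. The correct way to extract a positive admissible speed is to note that, when $c_1<c_0$, Theorem \ref{t:solvability} guarantees that the \emph{entire} open interval $(c_1,c_0)$ consists of admissible speeds; since $c_0>0$, this interval meets $(0,+\infty)$ unless it is empty. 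Therefore I would split into the two cases $c_1<c_0$ and $c_1=c_0$: in the former, $(c_1,c_0)\cap(0,+\infty)\ne\emptyset$ because $c_0>0$, and every point of this nonempty intersection is an admissible positive speed; in the latter degenerate case I would argue that the unique admissible speed equals $c_0>0$, which again lies in $(0,+\infty)$.

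The main obstacle I anticipate is the boundary case and the precise description of $\mathcal{J}$ at the endpoints. Theorem \ref{t:solvability} is stated for the strict inequality $c_1<c_0$ and asserts admissibility on the open interval $(c_1,c_0)$; it says nothing directly about whether the endpoints $c_0$ or $c_1$ themselves belong to $\mathcal{J}$, nor about the case $c_1=c_0$. To handle this cleanly I would either (i) appeal to Proposition \ref{prop:solvability}, which yields $c\in[c_1,c_0]$ for the reduced problem \eqref{e:P} and thus shows that any admissible $c$ satisfies $c\le c_0$ while the solvability at the thresholds follows from the ``if and only if'' statements of Lemma \ref{lem:0101}(b), or (ii) simply observe that the dichotomy in the conclusion — either $\mathcal{J}=\emptyset$ or $\mathcal{J}\cap(0,+\infty)\ne\emptyset$ — only requires producing a single positive admissible speed under the assumption $\mathcal{J}\ne\emptyset$. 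Concretely, if $\mathcal{J}\ne\emptyset$ then $c_1\le c_0$ holds, so $c_0>0$ guarantees that either the open interval $(c_1,c_0)$ is nonempty and meets $(0,+\infty)$, or $c_1=c_0=c_0>0$ and this common value is the admissible speed. In both situations $\mathcal{J}\cap(0,+\infty)\ne\emptyset$, completing the proof. The only delicate point is thus ensuring the endpoint value is genuinely admissible when $c_1=c_0$, which is exactly where the sharp ``if and only if'' solvability in Lemma \ref{lem:0101}(b) and Proposition \ref{prop:solvability} ($c\in[c_1,c_0]$) must be used rather than the open-interval statement of Theorem \ref{t:solvability}.
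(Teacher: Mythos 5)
Your proposal is correct and follows essentially the same route as the paper: both reduce the claim to showing $c_0>0$ via the chain $c_0=\min\{c^*_{0,\alpha},c^*_{\alpha,\gamma}\}\ge\min\{s_{0,\alpha},s_{\alpha,\gamma}\}\ge s_{0,\gamma}>0$ from the lower bounds in \eqref{e:stima c0alfa1} and \eqref{e:stima calphagamma1}, and then conclude via Theorem \ref{t:solvability} and Proposition \ref{prop:solvability} that a nonempty $\mathcal{J}\subseteq[c_1,c_0]$ must meet $(0,+\infty)$. Your extra care with the endpoint case $c_1=c_0$ is a harmless refinement of the same argument, which the paper compresses into the assertion that $\mathcal{J}\cap(0,\infty)\ne\emptyset$ if and only if $c_0>0$.
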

\begin{proof}
Assume that $\mathcal{J}\ne \emptyset$; then, according to Theorem \ref{t:solvability}, we have $\mathcal{J}\cap (0, \infty)\ne \emptyset$ if and only if $c_0>0$. By \eqref{e:stima c0alfa1} and \eqref{e:stima calphagamma1}, we have
\[
\begin{aligned}
c_0&=\min\{c_{0,\alpha}^*, c_{\alpha,\gamma}^*\}
\\
&\ge \min\big\{\inf_{[0,\alpha)}\delta(f,\alpha)-2\sup_{[0,\alpha)}\sqrt{\Delta(Dg, \alpha)}, \inf_{(\alpha,\gamma]}\delta(f,\alpha)-2\sup_{(\alpha,\gamma]}\sqrt{\Delta(Dg, \alpha)}\big\}
\\
&\ge \inf_{[0,\gamma]}\delta(f,\alpha)-2\sup_{[0,\gamma]}\sqrt{\Delta(Dg, \alpha)}=s_{0,\gamma}.
\end{aligned}
\]
If condition \eqref{e:c>0} is satisfied, then $c_0>0$.
\end{proof}

\section{Existence of wavefronts in the model of biased movements}
\label{s:app}
\setcounter{equation}{0}
In this section we investigate the presence of wavefronts to the biased model \eqref{e:model} and prove their main qualitative properties. We make use of the results provided in Section \ref{s:tools} for a general reaction-diffusion-convection process.

\begin{proofof}{Lemma \ref{l:la}}
The function $D$ in \eqref{e:coeffs D} is a parabola with $D(0)=D_i$ and $D(1)=D_g$. We have $\dot D(u) =-(D_i-D_g)(4-6u)$, which vanishes iff $u=\frac23$, and $D(\frac23)=\frac13(-D_i+4D_g)$. Then $D$ is positive-negative-positive if and only if $D_i>4D_g$; the case $D_g=0$ is excluded because then $D$ changes sign only once in $(0,1)$. Then the two zeros $\alpha$ and $\beta$ of $D$ satisfy $\eqref{e:alphabeta}_{1,2}$.
Moreover $g(0)=0$ and $g(1)=0$ if and only if $k_g=0$; under this assumption, $g$ also vanishes at $\gamma$ defined in \eqref{e:gamma}. Hence $g$ satisfies condition (g) if and only if $k_g=0, \, \lambda_g>0$ and $r_i>0$. The condition $\gamma\in(\alpha,\beta)$ is then equivalent to \eqref{e:restr1-3}.
\end{proofof}

In the following proofs, we often make use of the notation
\begin{equation}\label{e:notation}
p:= C_iD_i+C_gD_g \quad \text{and} \quad q:=C_gD_g.
\end{equation}
We now rewrite formulas \eqref{e:coeffs f}--\eqref{e:coeffs g} by exploiting \eqref{e:alphabeta}, \eqref{e:gamma} and \eqref{e:notation}:
\begin{align}
f(u)&= -pu(1-u)^2 -qu(1-u),
\label{e:Df1}
\\
D(u) & = 3(D_i-D_g)(u-\alpha)(u-\beta),
\label{e:Df2}
\\
g(u)&= (r_i+\lambda_g)\cdot u(1-u)(u-\gamma).
\label{e:Df3}
\end{align}

\begin{remark}
{\rm We point out that $\dot f(0)=-(p+q)$ and $\dot f(1)=q$; these quantities can be understood the drift at very low and maximum concentration, respectively.
}\end{remark}

\begin{remark}
{\rm The movement velocity $v=v(u)$ is defined by $f(u)=: uv(u)$. Then
$v(u) = -pu^2 + (2p+q)u-(p+q)=(1-u) \left(pu-(p+q)\right)$,
and then $v$ vanishes at the maximum density $1$; it can  also possibly vanish at
$u_0=\frac{p+q}{p}$ (i.e., if $u_0\in[0,1)$).
This is analogous to similar models in collective movements \cite[\S 3.1]{Whitham}.
Recalling that $q<0$, it is easy to see that only the following cases may occur (for simplicity we do not include the case $p+q=0$, when $u_0=0$, or $p=0$, when $u_0$ is missing, for which slightly different results hold):

\begin{enumerate}

\item $q<0<p+q$. Then $v$ is concave, it is first negative, then positive; $f$ is convex-concave.
\item $p+q<0<p$. Then $v$ is positive and concave; $f$ is concave or convex-concave.
\item $p<0$, $q<0$. Then $v$ is positive and convex; $f$ is concave or concave-convex.
\end{enumerate}
}
\end{remark}

We assume conditions \eqref{e:restr1-1}--\eqref{e:restr1-3}; in particular, the assumption $C_g<0$ becomes $q<0$.
Under this notation, for $\phi, \phi_0 \in (0,1)$ we have, see \eqref{e:dq},
\begin{equation}\label{e:deltaphiphi0}
\begin{aligned}
\delta(f, \phi)(\phi_0)
&= -(p+q)+(2p+q)(\phi + \phi_0)-p(\phi^2+\phi\phi_0 +\phi_0^2).
\end{aligned}
\end{equation}

\begin{proofof}{Proposition \ref{p:CN}}
We apply condition \eqref{nec1}. Since $\dot{f}(u)=-p(3u^2-4u+1)+q(2u-1)$,
then \eqref{nec1} applies if
$-p(3\alpha^2-4\alpha+1)+q(2\alpha-1) > -p(3\beta^2-4\beta+1)+q(2\beta-1)$,
that is
\begin{equation}
\label{e:nec1}
-p\left(3(\alpha^2-\beta^2)-4(\alpha-\beta)\right)+2q(\alpha-\beta)>0.
\end{equation}
By \eqref{e:alphabeta}
we obtain that \eqref{e:nec1} is equivalent to
$2q (\alpha-\beta)=-\frac{4q}{3}\omega >0$, that is, $q<0$.
Hence, we deduce $C_g<0$ since $D_g>0$ by Lemma \ref{l:la}.
\end{proofof}

A sufficient condition for the existence of wavefronts to equation \eqref{e:model} is \eqref{e:ssigma}. The following result provides an upper estimate of the right-hand side of \eqref{e:ssigma}.

\begin{lemma}\label{l:stimaDx} We have
\begin{equation*}
2\sup_{[0, \gamma]}\sqrt{\Delta(Dg, \alpha)}+2\sup_{[\gamma,1]}\sqrt{\Delta(Dg, \beta)}
\le
\sqrt{\frac{D_g}{\lambda_g}}\sqrt{d-1}\left( \sqrt{\mu(2+\omega)} + \sqrt{1+\omega}\right).
\end{equation*}
\end{lemma}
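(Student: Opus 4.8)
The plan is to collapse the two suprema into elementary one–variable maxima of explicit polynomials. First I would use \eqref{e:deltaDelta} to replace the integral means by difference quotients, i.e. $\sup_{[0,\gamma]}\Delta(Dg,\alpha)\le\sup_{[0,\gamma]}\delta(Dg,\alpha)$ and $\sup_{[\gamma,1]}\Delta(Dg,\beta)\le\sup_{[\gamma,1]}\delta(Dg,\beta)$. This is the only nonelementary ingredient; everything after it is an explicit estimate.

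Next I would make the two difference quotients explicit. Since $D(\alpha)=D(\beta)=0$ by \eqref{e:Df2}, we have $(Dg)(\alpha)=(Dg)(\beta)=0$, hence $\delta(Dg,\alpha)(\phi)=D(\phi)g(\phi)/(\phi-\alpha)$ and $\delta(Dg,\beta)(\phi)=D(\phi)g(\phi)/(\phi-\beta)$, and the corresponding linear factor of $D$ cancels. Inserting the factorisations \eqref{e:Df2}, \eqref{e:Df3} and writing $D_i-D_g=D_g(d-1)$ and $r_i+\lambda_g=\lambda_g(\mu+1)$ yields
\[
\delta(Dg,\alpha)(\phi)=3D_g\lambda_g(d-1)(\mu+1)\,(\beta-\phi)(\gamma-\phi)\phi(1-\phi),
\]
together with the symmetric expression $\delta(Dg,\beta)(\phi)=3D_g\lambda_g(d-1)(\mu+1)\,(\phi-\alpha)(\phi-\gamma)\phi(1-\phi)$. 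A sign check using $\alpha<\gamma<\beta$ and $0\le\phi\le1$ shows the first is nonnegative on $[0,\gamma]$ and the second on $[\gamma,1]$, so each supremum is a genuine maximum of a nonnegative quartic.

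Then I would bound each quartic by the product of the maxima of its factors. On $[0,\gamma]$ I use $(\beta-\phi)\le\beta$, $(1-\phi)\le1$ and $\max_{[0,\gamma]}(\gamma-\phi)\phi=\gamma^2/4$ (the parabola $(\gamma-\phi)\phi$ peaks at $\phi=\gamma/2$); on $[\gamma,1]$ I use $(\phi-\alpha)\le1-\alpha$, $\phi\le1$ and $\max_{[\gamma,1]}(\phi-\gamma)(1-\phi)=(1-\gamma)^2/4$. Substituting the identities $\beta=(2+\omega)/3$, $1-\alpha=(1+\omega)/3$, $\gamma=\mu/(\mu+1)$ and $1-\gamma=1/(\mu+1)$ and taking square roots, the first term contributes $\sqrt{(d-1)D_g\lambda_g(2+\omega)}\;\mu/\sqrt{\mu+1}$ and the second $\sqrt{(d-1)D_g\lambda_g(1+\omega)}\;/\sqrt{\mu+1}$ (each already multiplied by the factor $2$). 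Closing with the elementary inequalities $\mu/\sqrt{\mu+1}\le\sqrt{\mu}$ and $1/\sqrt{\mu+1}\le1$ removes the residual $\mu+1$, and the sum equals $\sqrt{(d-1)D_g\lambda_g}\big(\sqrt{\mu(2+\omega)}+\sqrt{1+\omega}\big)$, the right–hand side of the lemma.

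There is no genuine obstacle here: the substance is the factorisation and sign analysis, the two parabolic maxima, and careful bookkeeping. The only point that requires attention is to group the four factors so that the crude product–of–maxima estimate still simplifies to a closed form — in particular, that the two parabolic maxima contribute precisely $\gamma$ and $1-\gamma$, which after $\gamma=\mu/(\mu+1)$ pair cleanly with $2+\omega$ and $1+\omega$ — together with tracking the constant $3/4$ through the substitutions of $\alpha,\beta,\gamma$ in terms of $\omega$ and $\mu$.
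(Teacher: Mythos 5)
Your proof is correct and follows essentially the same route as the paper's: both pass from $\Delta$ to $\delta$ via \eqref{e:deltaDelta}, cancel the vanishing linear factor of $D$ to get an explicit nonnegative quartic, and bound it by a product of elementary maxima of its factors (the paper groups $\phi(1-\phi)\le 1/4$ with $(\gamma-\phi)(\beta-\phi)\le\gamma\beta$, you group $(\gamma-\phi)\phi\le\gamma^2/4$ with $\beta-\phi\le\beta$, which is a trivially different and in fact slightly sharper bookkeeping that lands on the same two intermediate bounds). One remark: your final expression $\sqrt{(d-1)D_g\lambda_g}\bigl(\sqrt{\mu(2+\omega)}+\sqrt{1+\omega}\bigr)$ is exactly what the paper's own proof also produces, so the factor $\sqrt{D_g/\lambda_g}$ in the printed statement appears to be a typo for $\sqrt{D_g\lambda_g}$ (the latter is what is actually used in the proof of Theorem \ref{t:mainmodel}); this is not a gap in your argument.
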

\begin{proof}
By \eqref{e:Df2} we have
\(
D(\phi)g(\phi)=3(D_i-D_g)(r_i+\lambda_g)\phi(\phi-\alpha)(\phi-\gamma)(\phi-\beta)(1-\phi).
\)
Then, for $\phi \in [0,\gamma]$ we obtain
\begin{align}
\frac{D(\phi)g(\phi)}{\phi-\alpha}
&\le \frac 34 (D_i-D_g)(r_i+\lambda_g)(\phi-\gamma)(\phi-\beta)\nonumber
\le \frac 34 (D_i-D_g)(r_i+\lambda_g)\gamma\beta
\nonumber\\
&=\frac 14 r_i(D_i-D_g)(2+\omega).\label{e:plij}
\end{align}
By \eqref{e:deltaDelta} and \eqref{e:plij} we deduce
\begin{equation}\label{e:dx1}
2\sup_{[0, \gamma]}\sqrt{\Delta(Dg, \alpha)}\le \sqrt{D_g}\sqrt{r_i(d-1)(2+\omega)}.
\end{equation}
With a similar reasoning we have that
\begin{equation}\label{e:dx2}
2\sup_{[\gamma,1]}\sqrt{\Delta(Dg, \beta)}\le \sqrt{D_g}\sqrt{\lambda_g(d-1)(1+\omega)},
\end{equation}
since we have $(\phi-\gamma)(\phi-\alpha)\le (1-\gamma)(1-\alpha)$, for $\phi \in [\gamma,1]$, because $(r_i+\lambda_g)(1-\gamma) = \lambda_g$. We complete the proof by combining \eqref{e:dx1} and \eqref{e:dx2}.
\end{proof}

\subsection{A strictly concave convective term}\label{ss:concave}
The left-hand side of \eqref{e:ssigma} takes a simple form when $f$ is strictly concave (see Remark \ref{r:lhswith f concava}); for this reason we first consider this case, see Figure \ref{f:Dgfsconcave}. The following result characterizes the strict concavity of the function $f$, see \eqref{e:restr2}.

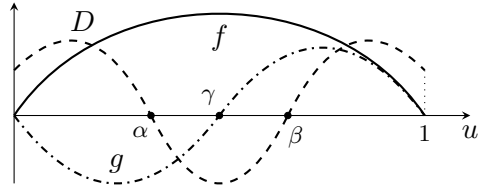
\begin{figure}[htb]
\begin{center}
\begin{tikzpicture}[>=stealth, scale=0.6]
\draw[->] (0,0) --  (10,0) node[below]{$u$} coordinate (x axis);
\draw[->] (0,0) -- (0,2.5) coordinate (y axis);
\draw (0,0) -- (0,-1.5);
\draw[thick,dashed] (0,1) .. controls (1,2) and (2,2) .. (3,0) node[left=4, below=0]{{\footnotesize{$\alpha$}}} node[midway,above]{$D$};
\draw[thick,dashed] (3,0) .. controls (4,-2) and (5,-2) .. (6,0);
\draw[thick,dashed] (6,0) node[right=3, below=0]{{\footnotesize{$\beta$}}} .. controls (7,2) and (8,2) .. (9,1);
\draw[dotted] (9,1) -- (9,0) node[below]{{\footnotesize{$1$}}};
\draw[thick,dashdotted] (0,0) .. controls (1.5,-2) and (3,-2) .. (4.5,0) node[left=4, above=0]{{\footnotesize{$\gamma$}}} node[midway,above]{$g$};
\draw[thick,dashdotted] (4.5,0) .. controls (6,2) and (7.5,2) .. (9,0);
\filldraw[black] (3,0) circle (2pt);
\filldraw[black] (4.5,0) circle (2pt);
\filldraw[black] (6,0) circle (2pt);
\draw[thick] (0,0) .. controls (2,3) and (7,3) .. (9,0) node[midway,below]{$f$};
\end{tikzpicture}

\end{center}
\caption{\label{f:Dgfsconcave}{Plots of the functions $D$ (dashed line), $g$ (dashdotted line) and $f$ (solid line) in the case $f$ is strictly concave.}}
\end{figure}

\begin{lemma}
\label{lem:conc}
The function $f$ in \eqref{e:coeffs f} is strictly concave if and only if \eqref{e:restr2} is satisfied.
\end{lemma}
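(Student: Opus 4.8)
The plan is to compute the second derivative $\ddot f$ directly and determine when it is negative on the whole interval $[0,1]$. From the formula \eqref{e:coeffs f}, or equivalently from \eqref{e:Df1} with the notation $p = C_iD_i+C_gD_g$ and $q = C_gD_g$ introduced in \eqref{e:notation}, we have $f(u) = -pu(1-u)^2 - qu(1-u)$. Expanding, $f(u) = -p(u - 2u^2 + u^3) - q(u - u^2)$, so that $\dot f(u) = -p(1 - 4u + 3u^2) - q(1 - 2u)$, which matches the expression already used in the proof of Proposition \ref{p:CN}. Differentiating once more gives $\ddot f(u) = -p(-4 + 6u) + 2q = -6pu + 4p + 2q$. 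This is an affine function of $u$, which is the key simplification: strict concavity on $[0,1]$ is equivalent to $\ddot f < 0$ throughout, and for an affine function this reduces to checking the two endpoints.

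First I would record $\ddot f(u) = 2\big(2p + q - 3pu\big)$ and observe that, being affine in $u$, its sign on $[0,1]$ is controlled by its values at $u=0$ and $u=1$. We compute $\ddot f(0) = 2(2p+q)$ and $\ddot f(1) = 2(q - p)$. Strict concavity, i.e. $\ddot f < 0$ on all of $[0,1]$, holds if and only if both $\ddot f(0) < 0$ and $\ddot f(1) < 0$ (if the affine function is negative at both endpoints it is negative on the whole segment; conversely if it is negative throughout it is in particular negative at the endpoints). Thus $f$ is strictly concave iff $2p + q < 0$ and $q - p < 0$, that is $2p + q < 0$ and $p > q$.

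The next step is to translate these two inequalities into the single condition \eqref{e:restr2}, namely $0 \le sd \le 3/2$, using the dimensionless parameters from \eqref{e:def-cd}. Recall $s = C_i/|C_g|$ and $d = D_i/D_g$, and that we always assume $C_g < 0$, so $q = C_gD_g < 0$. I would divide the two inequalities by $|q| = |C_g|D_g = -q > 0$. Writing $p/|q| = (C_iD_i + C_gD_g)/(|C_g|D_g)$ and using $C_i = s|C_g|$ (so $C_iD_i = s|C_g|D_i$) together with $C_gD_g = -|C_g|D_g$, one gets $p/|q| = s\,(D_i/D_g) - 1 = sd - 1$. Since $q < 0$ we have $q/|q| = -1$. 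The condition $2p + q < 0$ becomes $2(sd-1) - 1 < 0$, i.e. $sd < 3/2$; the condition $p > q$ becomes $sd - 1 > -1$, i.e. $sd > 0$. Combining, strict concavity is equivalent to $0 < sd < 3/2$, which is \eqref{e:restr2} up to the treatment of the boundary (the endpoints correspond to the degenerate cases where concavity fails to be strict at $u=1$ or $u=0$, consistent with the closed-interval statement in \eqref{e:restr2}).

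The computation is entirely routine; there is no serious obstacle, the only mild care needed being the bookkeeping of signs when dividing by $|q|$ and the handling of the boundary of the region in \eqref{e:restr2} versus strict inequalities. The essential structural point that makes the argument clean is that $\ddot f$ is affine, so that checking concavity on the interval collapses to a two-point endpoint test rather than an optimization over $[0,1]$.
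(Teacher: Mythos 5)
Your computation follows essentially the same route as the paper: compute $\ddot f(u)=-6pu+4p+2q$ with $p,q$ as in \eqref{e:notation}, use that this is affine in $u$ to reduce concavity to a two-endpoint check, and translate the resulting inequalities on $p,q$ into the condition on $\c d$. The translation $p/|q|=\c d-1$, $q/|q|=-1$ is correct and the arithmetic is fine.

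The one genuine imprecision is at the boundary of the parameter region. You assert that strict concavity is equivalent to $\ddot f<0$ on all of $[0,1]$, which forces strict inequalities at both endpoints and yields $0<\c d<3/2$; you then wave at the discrepancy with the closed condition $0\le \c d\le 3/2$ of \eqref{e:restr2} by saying the endpoints are ``degenerate cases where concavity fails to be strict.'' That is backwards: a $C^2$ function with $\ddot f\le 0$ whose second derivative vanishes only at an isolated point is still strictly concave, so at $\c d=0$ or $\c d=3/2$ the function $f$ \emph{is} strictly concave (here $\ddot f$ is affine and not identically zero, since $q<0$ rules out $2p+q=-p+q=0$ simultaneously). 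The paper gets the closed range precisely by requiring $\ddot f<0$ only on the open interval $(0,1)$, which translates into the non-strict inequalities $2p+q\le 0$ and $-p+q\le 0$, i.e.\ $q\le p\le -q/2$, i.e.\ $0\le \c d\le 3/2$. So to match the statement of \eqref{e:restr2} you should weaken your endpoint test to non-strict inequalities and note that the excluded degenerate case ($\ddot f\equiv 0$) cannot occur because $q<0$.
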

\begin{proof}
By \eqref{e:Df1} we compute
$\ddot f(u)=-6pu+4p+2q$; therefore $\ddot f<0$ in $(0,1)$ if and only if
\begin{equation}\label{e:boh}
-3pu+2p+q<0, \ \mbox{ for any } \ u\in (0,1).
\end{equation}
The line $-3pu+2p+q=0$ connects the points $(0,2p+q)$ and $(1,-p+q)$. We remark that $2p+q=-p+q=0$ is not possible since $q<0$ by conditions \eqref{e:restr1-1}--\eqref{e:restr1-3} and \eqref{e:notation}. Hence, \eqref{e:boh} holds if and only if
\begin{equation}
\label{e:boh2}
\begin{cases}
2p+q\le 0,\\
-p+q \le 0.
\end{cases}
\end{equation}
Conditions $\eqref{e:boh2}$ hold if and only if $q\le p\le -q/2$, which is equivalent to \eqref{e:restr2}.
\end{proof}

\begin{remark}\label{rem:convf}
{\rm
From the proof of Lemma \ref{lem:conc} we deduce that $f$ is strictly convex iff $-\frac{C_gD_g}{2}\le C_iD_i + C_gD_g\le C_gD_g$; this condition does not match with the assumption $C_g<0$, which is necessary to have wavefronts to equation \eqref{e:model} satisfying \eqref{e:bvs} by Proposition \ref{p:CN}. Indeed, the bare convexity of $f$ in $[\alpha,\beta]$ is sufficient to hinder the existence of such wavefronts, because the right-hand side of \eqref{nec1} is strictly positive when $D$ and $g$ are as in \eqref{e:Df2}, \eqref{e:Df3}.
}
\end{remark}
We now apply the sufficient condition \eqref{e:ssigma} to the current case.
%

\begin{theorem}\label{t:mainmodel} If $f$ is strictly concave and
\begin{equation}
\frac{d-1}{\sqrt{d-4}} \frac{\sqrt{\mu (2+\omega)} + \sqrt{1+\omega}}{2\mu + 5+\c d(\mu-2)}
(\mu+1) < \frac29 E_g
\label{e:restr3}
\end{equation}
holds, then equation \eqref{e:model} admits wavefronts satisfying condition \eqref{e:bvs}.
\end{theorem}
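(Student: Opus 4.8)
The plan is to verify the sufficient condition \eqref{e:ssigma} of Corollary \ref{t:CS}; once \eqref{e:ssigma} holds, the existence of wavefronts satisfying \eqref{e:bvs} is immediate. The proof thus reduces to computing the left-hand side of \eqref{e:ssigma} in closed form, over-estimating its right-hand side, and recognizing that \eqref{e:restr3} is precisely the resulting inequality between the two.

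First I would exploit the strict concavity of $f$. By Remark \ref{r:lhswith f concava}, the left-hand side of \eqref{e:ssigma} equals $\delta(f,\alpha)(\gamma)-\delta(f,\beta)(\gamma)$. Evaluating this with the explicit difference quotient \eqref{e:deltaphiphi0} and the notation \eqref{e:notation}, one factors out $(\alpha-\beta)$ and uses $\alpha+\beta=4/3$, $\beta-\alpha=2\omega/3$ to reduce it to $-\tfrac{2\omega}{3}\bigl(p(\tfrac23-\gamma)+q\bigr)$. Substituting $\gamma=\mu/(\mu+1)$ (so that $\tfrac23-\gamma=(2-\mu)/(3(\mu+1))$), $p=|C_g|D_g(\c d-1)$ and $q=-|C_g|D_g$ collapses the bracket, and the numerator that emerges is exactly $2\mu+5+\c d(\mu-2)$, giving
\begin{equation*}
\inf_{[0,\gamma]}\delta(f,\alpha)-\sup_{[\gamma,1]}\delta(f,\beta)=\frac{2\omega|C_g|D_g}{9}\cdot\frac{2\mu+5+\c d(\mu-2)}{\mu+1}.
\end{equation*}
Note that positivity of this quantity, already guaranteed by Remark \ref{r:lhswith f concava}, forces $2\mu+5+\c d(\mu-2)>0$, so no sign ambiguity arises when dividing by it below.

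Next I would bound the right-hand side of \eqref{e:ssigma} from above by Lemma \ref{l:stimaDx}, namely by $\sqrt{D_g\lambda_g}\,\sqrt{d-1}\bigl(\sqrt{\mu(2+\omega)}+\sqrt{1+\omega}\bigr)$. Combining the two displays, \eqref{e:ssigma} holds as soon as
\begin{equation*}
\frac{2\omega|C_g|D_g}{9}\cdot\frac{2\mu+5+\c d(\mu-2)}{\mu+1}>\sqrt{D_g\lambda_g}\,\sqrt{d-1}\Bigl(\sqrt{\mu(2+\omega)}+\sqrt{1+\omega}\Bigr).
\end{equation*}
Dividing by $\sqrt{D_g\lambda_g}$ and recognizing $|C_g|D_g/\sqrt{D_g\lambda_g}=E_g$, then replacing $\sqrt{d-1}/\omega$ by $(d-1)/\sqrt{d-4}$ via \eqref{e:omegadad}, rearranges this inequality into exactly \eqref{e:restr3}. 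Hence \eqref{e:restr3} implies \eqref{e:ssigma}, and Corollary \ref{t:CS} produces the desired wavefronts.

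I expect no conceptual obstacle, since all ingredients are already in place: the closed form of the left-hand side from Remark \ref{r:lhswith f concava}, the upper bound of Lemma \ref{l:stimaDx}, and the definition of $E_g$. The only steps demanding care are the algebraic collapse producing the factor $2\mu+5+\c d(\mu-2)$, and the careful bookkeeping of $\lambda_g$, $\omega$ and $d$ when passing to the dimensionless parameters $E_g$ and \eqref{e:omegadad} so that the final inequality matches \eqref{e:restr3} verbatim.
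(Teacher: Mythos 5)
Your proposal is correct and follows essentially the same route as the paper's proof: reduce to the sufficient condition \eqref{e:ssigma} via Remark \ref{r:lhswith f concava}, compute the left-hand side explicitly to get the factor $2\mu+5+\c d(\mu-2)$, bound the right-hand side by Lemma \ref{l:stimaDx}, and convert $\sqrt{d-1}/\omega$ into $(d-1)/\sqrt{d-4}$ via \eqref{e:omegadad}. Your closed form agrees with the paper's \eqref{e:lefthandside} after passing to the dimensionless parameters, and your use of the bound in the form $\sqrt{D_g\lambda_g}\,\sqrt{d-1}\bigl(\sqrt{\mu(2+\omega)}+\sqrt{1+\omega}\bigr)$ is the one actually produced by the proof of Lemma \ref{l:stimaDx} and needed for \eqref{e:restr3} to come out verbatim.
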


\begin{proof}
In order to apply \eqref{e:ssigma} we exploit Remark \ref{r:lhswith f concava}.
Then, by exploiting \eqref{e:deltaphiphi0}, we compute
\begin{equation*}
\begin{aligned}
\delta(f,\alpha)(\gamma)-\delta(f,\beta)(\gamma)&=(2p+q)(\alpha-\beta)-p(\alpha^2+(\alpha -\beta)\gamma -\beta^2)
\\
&=(\beta-\alpha)\left(p(\alpha+\beta - 2+\gamma)-q \right),
\end{aligned}
\end{equation*}
whence, from $\beta-\alpha=\frac 23\omega$ and $\alpha+\beta=\frac 43$, we get
\begin{equation}\label{e:lefthandside}
\inf_{[0, \gamma]}\delta(f, \alpha)-\sup_{[\gamma, 1]}\delta(f, \beta)=\frac23 \omega\left[p\left(\gamma-\frac23\right)-q\right].
\end{equation}
%
By \eqref{e:notation} and \eqref{e:gamma} we can write
\begin{align*}
p\left(\gamma -\frac23\right)-q&=C_gD_g\left(\frac{r_i}{r_i+\lambda_g}-\frac53\right)+
C_iD_i\left(\frac{r_i}{r_i+\lambda_g}-\frac23\right)
\\
&=\frac{1}{3(r_i+\lambda_g)}\left(C_gD_g(-2r_i-5\lambda_g)+C_iD_i(r_i-2\lambda_g) \right).
\end{align*}
Therefore, when $f$ is strictly concave we have
\begin{equation*}
\begin{aligned}
\inf_{[0, \gamma]}\delta(f, \alpha)-\sup_{[\gamma, 1]}\delta(f, \beta)
=\frac{2\omega}{9(r_i+\lambda_g)}
&\left(C_gD_g(-2r_i-5\lambda_g)+C_iD_i(r_i-2\lambda_g) \right).
\end{aligned}
\end{equation*}
By the above formula, Lemma \ref{l:stimaDx}, and \eqref{e:omegadad}, condition \eqref{e:restr3}
implies \eqref{e:ssigma}.
\end{proof}

\begin{corollary}\label{c:lmmk}
Under \eqref{e:restr2}, condition \eqref{e:restr3} is satisfied if
\begin{equation}\label{e:lpoi}
\sqrt{\mu(2+\omega)+(1+\omega)} \frac{d-1}{\sqrt{d-4}} <\frac{4}{9\sqrt{2}} E_g.
\end{equation}
\end{corollary}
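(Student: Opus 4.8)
The plan is to assume \eqref{e:restr2} and \eqref{e:lpoi} and to deduce \eqref{e:restr3}. Since both left-hand sides carry the common factor $\frac{d-1}{\sqrt{d-4}}$ and the respective right-hand sides are $\frac29 E_g$ and $\frac{4}{9\sqrt2}E_g$ with $\frac{1}{\sqrt2}\cdot\frac{4}{9\sqrt2}=\frac29$, it suffices to prove the scalar inequality
\[
\frac{\bigl(\sqrt{\mu(2+\omega)}+\sqrt{1+\omega}\bigr)(\mu+1)}{2\mu+5+\c d(\mu-2)}\le\frac{1}{\sqrt2}\,\sqrt{\mu(2+\omega)+(1+\omega)}.
\]
Indeed, multiplying this by $\frac{d-1}{\sqrt{d-4}}$ and then invoking \eqref{e:lpoi} bounds the left-hand side of \eqref{e:restr3} by $\frac{1}{\sqrt2}\cdot\frac{4}{9\sqrt2}E_g=\frac29 E_g$, which is precisely \eqref{e:restr3}.

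To prove the scalar inequality I would use two elementary facts. First, concavity of the square root (equivalently, Cauchy--Schwarz) gives $\sqrt a+\sqrt b\le\sqrt{2(a+b)}$ for all $a,b\ge0$; applied with $a=\mu(2+\omega)$ and $b=1+\omega$ this bounds the numerator by $\sqrt2\,\sqrt{\mu(2+\omega)+(1+\omega)}\,(\mu+1)$. After dividing through by the positive radical $\sqrt{\mu(2+\omega)+(1+\omega)}$, the claim reduces to $\dfrac{\mu+1}{2\mu+5+\c d(\mu-2)}\le\dfrac12$.

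The second fact, and the only point needing care, is therefore $\c d(\mu-2)\ge-3$ under \eqref{e:restr2}, that is $0\le\c d\le\frac32$, together with the standing hypothesis $\mu>0$. If $\mu\ge2$ this is trivial, since $\c d(\mu-2)\ge0$. If $0<\mu<2$, then $\mu-2<0$, so $\c d(\mu-2)$ is smallest when $\c d$ attains its maximum $\frac32$; hence $\c d(\mu-2)\ge\frac32(\mu-2)>\frac32(-2)=-3$. In both cases $\c d(\mu-2)\ge-3$, which simultaneously guarantees $2\mu+5+\c d(\mu-2)\ge2\mu+2>0$ (so the denominator is positive, as is anyway required for the left-hand side of \eqref{e:ssigma} to be positive in the proof of Theorem \ref{t:mainmodel}) and, upon cross-multiplying, gives the equivalent inequality $2(\mu+1)\le2\mu+5+\c d(\mu-2)$. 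The whole argument is thus a short chain of estimates with no genuine analytic difficulty; the single delicate point is the sign bookkeeping for $0<\mu<2$, where both the upper bound $\c d\le\frac32$ from \eqref{e:restr2} and the positivity of $\mu$ must be invoked.
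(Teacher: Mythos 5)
Your proof is correct and follows essentially the same route as the paper's: both arguments rest on the denominator bound $2\mu+5+\c d(\mu-2)\ge 2(\mu+1)$ coming from $0\le \c d\le \tfrac32$, combined with the subadditivity estimate $\sqrt{a}+\sqrt{b}\le\sqrt{2(a+b)}$. The only difference is presentational (you package the two steps into one scalar inequality and spell out the sign bookkeeping for $0<\mu<2$, which the paper handles by writing the denominator as $(2+\c d)\mu+(5-2\c d)$).
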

\begin{proof}
By \eqref{e:restr2} we have $2\mu+5 + sd(\mu-2)=(2+\c d)\mu + (5-2\c d) \ge 2(\mu + 1)$;
so, condition \eqref{e:restr3} holds if
\begin{align*}
\left(\sqrt{\mu(2+\omega)} + \sqrt{1+\omega}\right) \frac{d-1}{\sqrt{d-4}} < \frac49 E_g.
\end{align*}
In turn, this condition is satisfied if \eqref{e:lpoi} holds.
\end{proof}

\begin{remark}
\label{rem:specific1}
{\rm
For fixed $C_g, D_g$ satisfying \eqref{e:restr1-3}, condition \eqref{e:lpoi} (that is, \eqref{e:argen}) identifies the triangle $\mathcal{T}_g(d)$ in \eqref{e:triangleT}. Therefore, under \eqref{e:restr2}, if $(r_i,\lambda_g)\in \mathcal{T}_g(d)$ then the assumptions of Theorem \ref {t:mainmodel} are satisfied and
 equation \eqref{e:model} admits wavefronts satisfying \eqref{e:bvs}.
}
\end{remark}

We now investigate the sign of the speed of wavefronts; this issue is important in the biological framework. We find below conditions in order that wavefronts with {\em positive} speed exists and conditions assuring that every wavefront has {\em negative} speed.

About the case of {\em positive speeds}, by \eqref{e:deltaphiphi0}, \eqref{e:gamma}, \eqref{e:restr1-3} and \eqref{e:notation} we obtain
\begin{align}
&\inf_{[0, \gamma]}\delta(f, \alpha) =\frac{f(\gamma)-f(\alpha)}{\gamma-\alpha}=-(p+q)+(2p+q)(\alpha+\gamma )-p(\alpha^2+\alpha\gamma +\gamma^2)
\nonumber
\\
&=C_gD_g\left( -\frac49 -\frac59 \omega-\frac{\omega^2}{9}+\frac{7+\omega}{3}\gamma-\gamma^2\right)+C_iD_i\left( -\frac19 -\frac29 \omega-\frac{\omega^2}{9}+\frac{4+\omega}{3}\gamma-\gamma^2\right)
\nonumber
\\
&= \frac{|C_g|D_g}{9}\left((1-\c d)(\omega^2+9\,\gamma^2-3\,\omega\gamma)+(5-2\,\c d)\omega-3(7-4\,\c d)\gamma+4-\c d\right)
\nonumber
\\
&=: \frac{|C_g|D_g}{9}\tau(\omega,\gamma,\c d).
\label{e:tau}
\end{align}

Denote
\begin{equation}
\label{e:equa1}
\mathcal{R}:= \left\{(\omega,\gamma) \colon \sqrt{3}-1< \omega <1 \ \hbox{ and }\  \frac{2-\omega}{3}< \gamma < 1-\frac{1}{\sqrt{3}}\right\}.
\end{equation}

\begin{lemma}\label{lem:tau}
We have $\tau(\omega,\gamma,\c d)>0$ for every $(\omega,\gamma)\in\mathcal{R}$ and $0 \le \c d \le 3/2$.
\end{lemma}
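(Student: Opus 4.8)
The plan is to strip away the parameter $\c d$ first and then exploit the convexity (resp. concavity) of the two resulting functions of $(\omega,\gamma)$. The crucial preliminary observation is that $\tau$ is \emph{affine} in $\c d$: regrouping gives
\[
\tau(\omega,\gamma,\c d) = \tau_0(\omega,\gamma) - \c d\,\bigl(\omega^2+9\gamma^2-3\omega\gamma+2\omega-12\gamma+1\bigr),
\]
where $\tau_0(\omega,\gamma):=\omega^2+9\gamma^2-3\omega\gamma+5\omega-21\gamma+4$. Since an affine function of $\c d$ on $[0,3/2]$ attains its minimum at an endpoint, it suffices to prove $\tau_0>0$ and $\tau_{3/2}>0$ on $\mathcal{R}$, where $\tau_{3/2}:=\tau(\cdot,\cdot,3/2)=-\tfrac12\omega^2-\tfrac92\gamma^2+\tfrac32\omega\gamma+2\omega-3\gamma+\tfrac52$. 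I would also record the geometry of the domain: $\mathcal{R}$ is the \emph{open} triangle with vertices $V_1=(\sqrt3-1,\,1-1/\sqrt3)$, $V_2=(1,1/3)$ and $V_3=(1,\,1-1/\sqrt3)$, since the two bounds on $\gamma$ coincide exactly when $\omega=\sqrt3-1$, which pins down $V_1$ and shows that the lower bound $\omega>\sqrt3-1$ is the leftmost corner.

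For $\tau_{3/2}$ the argument is clean. Its quadratic part has negative definite Hessian, so $\tau_{3/2}$ is strictly \emph{concave}, and the minimum of a concave function over the closed convex triangle $\overline{\mathcal{R}}$ is attained at a vertex. I would then evaluate $\tau_{3/2}$ at $V_1,V_2,V_3$ and check that the three values $\tfrac92(2\sqrt3-3)$, $3$ and $\tfrac72(\sqrt3-1)$ are all strictly positive. Hence $\tau_{3/2}>0$ on $\overline{\mathcal{R}}\supset\mathcal{R}$.

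For $\tau_0$ the situation is more delicate, because $\tau_0$ is strictly \emph{convex} (its Hessian is positive definite), so positivity at the vertices alone does \emph{not} give positivity inside. Since the free minimizer of $\tau_0$ lies well outside $\overline{\mathcal{R}}$, the minimum over $\overline{\mathcal{R}}$ is attained on the boundary, and I would restrict $\tau_0$ to each of the three edges, on each of which it reduces to a one-variable quadratic. A short edge-by-edge computation then shows that the boundary minimum equals $0$ and is attained \emph{only} at $V_1$: indeed $\tau_0(V_1)=0$, while the remaining edge minima are strictly positive (for instance $\tau_0(V_3)=2(\sqrt3-1)>0$). Since $V_1\notin\mathcal{R}$ because the triangle is open, this yields the strict inequality $\tau_0>0$ throughout $\mathcal{R}$.

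The main obstacle is exactly this last point: the estimate for $\tau_0$ is \emph{tight}, vanishing at the corner $V_1$, so no crude term-by-term bound can work and the openness of $\mathcal{R}$ must be used in an essential way. Convexity forces the minimization onto the edges (rather than onto the vertices, as for the concave $\tau_{3/2}$), which is what makes the $\tau_0$ half of the argument heavier and requires the careful localization of the minimizer at the excluded vertex $V_1$.
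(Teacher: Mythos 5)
Your proof is correct, and it takes a genuinely different route from the paper's. Both arguments first dispose of the parameter $\c d$ and then study functions of $(\omega,\gamma)$ alone, but they do so differently. The paper proves the monotonicity $\partial_{\c d}\tau>0$ on the larger triangle $\mathcal{T}$ (via a critical-point and boundary analysis of the concave quadratic $\partial_{\c d}\tau$), which reduces the problem to the single endpoint $\c d=0$; it then handles $\tau(\cdot,\cdot,0)=\tau_0$ by completing the square, $\tau_0=\left(\omega-\tfrac32\gamma+\tfrac52\right)^2+\tfrac{27}{4}(1-\gamma)^2-9$, and reading off positivity directly from the defining inequalities of $\mathcal{R}$. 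You instead use only the affinity of $\tau$ in $\c d$, which costs you a second endpoint check ($\tau_{3/2}$, dispatched by concavity and the three vertex values, all of which I verified) but spares you the monotonicity lemma entirely. For $\tau_0$ your convexity/boundary-minimization argument replaces the completed square; the edge computations you only assert do check out (on the edge $\gamma=\tfrac{2-\omega}{3}$ one gets $\tau_0=3(1+\omega)^2-9$, on $\gamma=1-1/\sqrt3$ one gets $\tau_0=\left(\omega+1+\tfrac{\sqrt3}{2}\right)^2-\tfrac{27}{4}$, both increasing in $\omega$ and vanishing exactly at $\omega=\sqrt3-1$; on $\omega=1$ the minimum is $2(\sqrt3-1)>0$ at $V_3$), so the boundary minimum is indeed $0$, attained only at the excluded vertex $V_1$. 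Your observation that the estimate is tight at $V_1$ and that the openness of $\mathcal{R}$ is essential is accurate and applies equally to the paper's argument, where the same two strict inequalities defining $\mathcal{R}$ are exactly what make the completed square exceed $9$. Net assessment: the paper's treatment of $\tau_0$ is slicker, while your treatment of the $\c d$-dependence is more elementary; either combination yields a complete proof.
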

\begin{proof}
First, it is easy to show that the function $\partial_{\c d}\tau (\omega,\gamma,\c d) = - \omega^2 -9\gamma^2 +3\omega\gamma -2\omega+12\gamma-1$ has no critical points in the triangle
\[
\mathcal{T}:=\left\{(\omega,\gamma)\in \mathbb{R}^2:\, 0<\omega <1 \ \mbox{ and } \  \frac{2-\omega}{3} < \gamma < \frac{2+\omega}{3}\right\},
\]
which contains the set $\mathcal{R}$, see Figure \ref{f:2triangles}.
Moreover, on $\partial\mathcal{T}$ we have $\partial_{\c d}\tau>0$ and then
\begin{equation}
\label{e:mono-cd}
\partial_{\c d}\tau (\omega,\gamma,\c d) >0 \quad \hbox{ for } (\omega, \gamma)\in\mathcal{T}.
\end{equation}
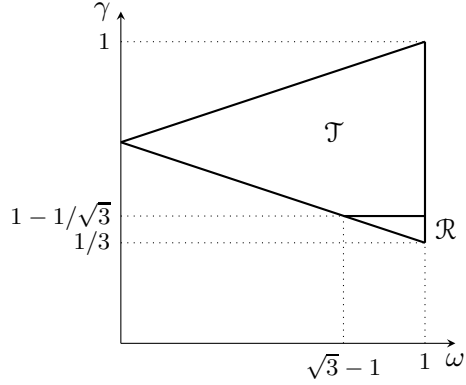
\begin{figure}[htb]
\begin{center}
\begin{tikzpicture}[>=stealth, scale=4]
\draw[->] (0,0) --  (1.1,0) node[below]{$\omega$} coordinate (x axis);
\draw[->] (0,0) -- (0,1.1) node[left]{$\gamma$} coordinate (y axis);

\draw[thick](0,2/3)--(1,1);
\draw[thick](0,2/3)--(1,1/3);
\draw[thick](1,1/3)--(1,1);
\draw[dotted](1,0) node[below]{\footnotesize$1$}--(1,1/3);
\draw[dotted](0,1) node[left]{\footnotesize$1$}--(1,1);
\draw[dotted](0,1/3) node[left]{\footnotesize$1/3$}--(1,1/3);
\draw(0.7,0.7) node{$\mathcal{T}$};

\draw[dotted](0,0.422) node[left]{\footnotesize$1-1/\sqrt{3}$}--(1,0.422);
\draw[dotted](0.732,0) node[below]{\footnotesize$\sqrt{3}-1$}--(0.732,0.422);
\draw[thick](0.732,0.422)--(1,0.422);
\draw(1,0.38) node[right]{$\mathcal{R}$};

\end{tikzpicture}

\end{center}
\caption{\label{f:2triangles}{The triangles $\mathcal{T}$ and $\mathcal{R}$.}}
\end{figure}

Then, by the monotonicity property proved in \eqref{e:mono-cd}, it is sufficient to prove that $\tau(\omega,\gamma,0)>0$ for every $(\omega,\gamma)\in\mathcal{R}$. We have
\[
\begin{aligned}
 \tau(\omega,\gamma,0)&=\omega^2+9\,\gamma^2-3\,\omega\gamma+5\omega-21\gamma+4=
 \left(\omega-\frac{3}{2}\gamma +\frac52\right)^2+\frac{27}{4}(1-\gamma)^2-9.
 \end{aligned}
 \]
 This quantity is positive if, in particular,
 \[
 \omega-\frac{3}{2}\gamma +\frac52>\frac{3\sqrt{3}}{2} \ \mbox{ and } \ 1-\gamma >\frac1{\sqrt{3}}.
 \]
The second inequality implies the first one when $\omega>\sqrt3-1$, and then $\tau(\omega, \gamma,0)>0$ for every $(\omega, \gamma) \in \mathcal{R}$.
\end{proof}

\begin{remark}\label{r:RR}
{\rm We easily see that $(\omega,\gamma)\in\mathcal{R}$ iff $(r_i,\lambda_g)\in\tilde{\mathcal{R}}(d)$ and $\sqrt{3}-1< \omega <1$, where
\[
\tilde{\mathcal{R}}(d) = \left\{(r_i,\lambda_g)\in\R^+\times\R^+ \colon \frac{1}{\sqrt{3}-1} < \frac{\lambda_g}{r_i}<\frac{1+\omega}{2-\omega} \right\}.
\]}
\end{remark}

\begin{theorem}\label{t:allk}
Assume $f$ is strictly concave, $(r_i,\lambda_g)\in\tilde{\mathcal{R}}(d)$ and $\sqrt{3}-1< \omega <1$.
If \eqref{e:ppoo} is satisfied, then either $\mathcal{J}=\emptyset$ or $\mathcal{J}\cap (0, +\infty)\ne \emptyset$.
\end{theorem}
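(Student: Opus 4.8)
The plan is to derive the statement directly from Lemma \ref{l:sooof}, whose single hypothesis \eqref{e:c>0} I would verify under the present assumptions. Thus the whole argument reduces to proving
\[
\inf_{[0,\gamma]}\delta(f,\alpha) > 2\sup_{[0,\gamma]}\sqrt{\Delta(Dg,\alpha)},
\]
after which the conclusion $\mathcal{J}=\emptyset$ or $\mathcal{J}\cap(0,+\infty)\ne\emptyset$ follows immediately from that lemma.

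First I would express both sides in terms of the model parameters. For the left-hand side I would invoke the explicit identity \eqref{e:tau}, namely $\inf_{[0,\gamma]}\delta(f,\alpha)=\frac{|C_g|D_g}{9}\,\tau(\omega,\gamma,\c d)$, and for the right-hand side I would use the upper bound \eqref{e:dx1}, that is $2\sup_{[0,\gamma]}\sqrt{\Delta(Dg,\alpha)}\le\sqrt{D_g}\sqrt{r_i(d-1)(2+\omega)}$. Consequently it suffices to establish the stronger inequality
\[
\frac{|C_g|D_g}{9}\,\tau(\omega,\gamma,\c d) > \sqrt{D_g}\sqrt{r_i(d-1)(2+\omega)},
\]
or, dividing by $\sqrt{D_g}$ and multiplying by $9$, the equivalent form
\[
|C_g|\sqrt{D_g}\,\tau(\omega,\gamma,\c d) > 9\sqrt{2+\omega}\,\sqrt{r_i(d-1)}.
\]

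Before manipulating these inequalities I need the positivity $\tau(\omega,\gamma,\c d)>0$, both to compare the displays and to multiply the standing hypothesis through by $\tau$. This is exactly where the assumptions enter: by Remark \ref{r:RR} the conditions $(r_i,\lambda_g)\in\tilde{\mathcal{R}}(d)$ and $\sqrt3-1<\omega<1$ place $(\omega,\gamma)$ in $\mathcal{R}$, while the strict concavity of $f$ gives $0\le\c d\le 3/2$ through Lemma \ref{lem:conc}; hence Lemma \ref{lem:tau} yields $\tau>0$. I would then rewrite the hypothesis \eqref{e:ppoo}: recalling from \eqref{e:def-cd} that $E_g=|C_g|\sqrt{D_g/\lambda_g}$ and $\mu=r_i/\lambda_g$, multiplying \eqref{e:ppoo} by $\tau$ and then by $\sqrt{\lambda_g}$, and using $\sqrt{\mu}=\sqrt{r_i}/\sqrt{\lambda_g}$, converts it into
\[
|C_g|\sqrt{D_g}\,\tau(\omega,\gamma,\c d) > 18\sqrt{r_i(d-1)}.
\]

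Finally I would close the gap between this rewritten hypothesis and the sufficient inequality by the elementary comparison $9\sqrt{2+\omega}<18$, which holds because $\omega<1<2$ forces $\sqrt{2+\omega}<2$. Combining the two displays gives
\[
|C_g|\sqrt{D_g}\,\tau(\omega,\gamma,\c d) > 18\sqrt{r_i(d-1)} > 9\sqrt{2+\omega}\,\sqrt{r_i(d-1)},
\]
so \eqref{e:c>0} holds and Lemma \ref{l:sooof} applies. The only genuinely nontrivial ingredient is securing $\tau>0$, so that the chain of equivalences preserves the inequality direction; this is precisely the point at which the hypotheses on $(\omega,\gamma)$ and the concavity of $f$ are used via Lemma \ref{lem:tau}. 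Everything else is the bookkeeping that reconciles the constant $18$ in \eqref{e:ppoo} with the factor $9\sqrt{2+\omega}$, and the bound $\omega<1$ leaves exactly the slack needed.
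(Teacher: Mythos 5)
Your proof is correct and follows essentially the same route as the paper: reduce to Lemma \ref{l:sooof}, secure $\tau>0$ via Remark \ref{r:RR}, Lemma \ref{lem:conc} and Lemma \ref{lem:tau}, and verify \eqref{e:c>0} by combining the identity \eqref{e:tau} with the bound \eqref{e:dx1} and the rewritten hypothesis \eqref{e:ppoo}. The only cosmetic difference is that the paper first absorbs the factor $\sqrt{2+\omega}<2$ into \eqref{e:dx1} to obtain \eqref{e:ooofs}, whereas you carry it along and compare $9\sqrt{2+\omega}$ with $18$ at the end; these are the same estimate.
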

\begin{proof}
By Remark \ref{r:RR}, Lemma \ref{lem:tau} applies and then $\tau(\omega,\gamma,\c d)>0$ if $(r_i,\lambda_g)\in\tilde{\mathcal{R}}(d)$, $\sqrt{3}-1< \omega <1$ and $0\le \c d\le\frac32$.
Now, notice that by \eqref{e:dx1} it follows
\begin{equation}\label{e:ooofs}
\sup_{[0, \gamma]}\sqrt{\Delta(Dg, \alpha)}\le \sqrt{r_iD_g(d-1)}.
\end{equation}
Then, condition \eqref{e:ppoo} implies \eqref{e:c>0} by \eqref{e:tau} and \eqref{e:ooofs}.
\end{proof}

 \begin{remark}
 \label{rem}
 {
 \rm
 We now show that there is a non-empty intersection between the cone $\tilde{\mathcal{R}}(d)$ in Remark \ref{r:RR} and the set of parameters described by Remark \ref{rem:specific1}, for $\sqrt{3}-1< \omega <1$, i.e., for $d>4+2\sqrt{3}\sim 7.46$.
In fact, notice that
 \[
 \frac{1}{\sqrt{3}-1}> \frac{1-\omega}{2+\omega} \ \mbox{ for every } \ 0< \omega <1.
 \]
Then it follows that  $\tilde{\mathcal{R}}(d)\cap\mathcal{T}_g(d)\ne\emptyset$ for  $d>4+2\sqrt{3}$, see Figure \ref{f:triangle_R}. The set $\mathcal{T}_g(d)$ was introduced in Section \ref{s:model}. As a consequence, if $\sqrt{3}-1< \omega <1$, $(r_i,\lambda_g)\in\tilde{\mathcal{R}}(d)\cap\mathcal{T}_g(d)$ and \eqref{e:ppoo} are satisfied, then there are wavefronts to equation \eqref{e:model} satisfying \eqref{e:bvs} having positive speeds.

\begin{figure}[htbp]
    \centering
    \includegraphics[width=8cm]{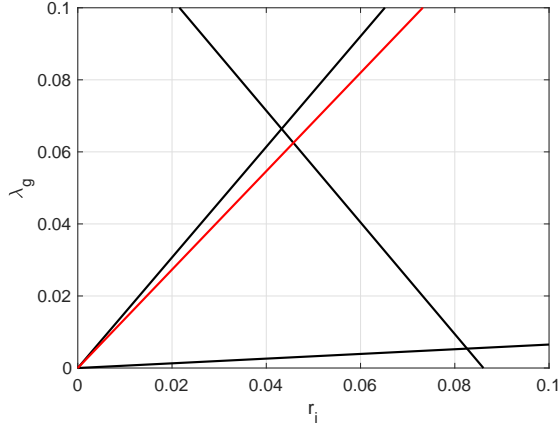}
  \caption{The triangle $\mathcal{T}_g(d)$ (thick black lines) and the cone $\tilde{\mathcal{R}}(d)$, which is bounded from below by the red line and from above by a black line. Here $|C_g|\sqrt{D_g}=6$ and $d=10$.}
    \label{f:triangle_R}
    \end{figure}
 }
 \end{remark}

About the case of {\em negative speeds} we have  the following result.

\begin{theorem}\label{t:modelc<0}
Assume $f$ is strictly concave and
\begin{equation}
\frac{\sqrt{(d-1)\omega(1+\omega)(2-\omega)\left((1+\omega)\mu -(2-\omega)\right)}}{(1+\omega)^2 +\c d(1-\omega^2)-3}> E_g.
\label{e:negspeeds}
\end{equation}
Then either $\mathcal{J}\subset (-\infty,0)$ or $\mathcal{J}=\emptyset$.
\end{theorem}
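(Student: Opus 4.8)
The plan is to apply Corollary \ref{t:CS}(ii): the conclusion follows once I verify the sufficient condition \eqref{e:negative speeds}, i.e. that $\min\{\inf_{[0,\gamma]}\delta(f,\alpha),\,\dot f(\alpha)-2\sqrt{\dot D(\alpha)g(\alpha)}\}<0$. Since a minimum is negative as soon as one of its two arguments is, I would target the second argument and show that \eqref{e:negspeeds} forces $\dot f(\alpha)<2\sqrt{\dot D(\alpha)g(\alpha)}$, leaving the sign of $\inf_{[0,\gamma]}\delta(f,\alpha)$ irrelevant. This is the natural choice: by \eqref{e:tau} the latter quantity equals $\frac{|C_g|D_g}{9}\tau(\omega,\gamma,\c d)$, whose sign depends on $\tau$ alone and cannot see the magnitude $E_g$ of diffusion and reaction that enters \eqref{e:negspeeds}.

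First I would evaluate the three ingredients at $\alpha=\frac{2-\omega}{3}$ using \eqref{e:Df1}--\eqref{e:Df3}, \eqref{e:alphabeta}, \eqref{e:gamma} and \eqref{e:notation}. From $\dot D(u)=3(D_i-D_g)(2u-\alpha-\beta)$ one gets $\dot D(\alpha)=-2\omega(d-1)D_g<0$, while $g(\alpha)=(r_i+\lambda_g)\alpha(1-\alpha)(\alpha-\gamma)<0$ because $\alpha<\gamma$; hence $\dot D(\alpha)g(\alpha)>0$ and the square root is meaningful. Writing $\gamma=\frac{\mu}{\mu+1}$, so that $\gamma-\alpha=\frac{(1+\omega)\mu-(2-\omega)}{3(\mu+1)}>0$, a direct computation yields the identity $\dot D(\alpha)g(\alpha)=\frac{2D_g\lambda_g}{27}\,(d-1)\,\omega(1+\omega)(2-\omega)\,[(1+\omega)\mu-(2-\omega)]$, so that the radicand in the numerator of \eqref{e:negspeeds} is exactly $\frac{27}{2D_g\lambda_g}\dot D(\alpha)g(\alpha)$. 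Next, using $\dot f(u)=-p(3u^2-4u+1)+q(2u-1)$ together with $3\alpha^2-4\alpha+1=\frac{\omega^2-1}{3}$, $2\alpha-1=\frac{1-2\omega}{3}$, $p=|C_g|D_g(\c d-1)$ and $q=-|C_g|D_g$, I obtain $\dot f(\alpha)=\frac{|C_g|D_g}{3}\,[(1+\omega)^2+\c d\,(1-\omega^2)-3]$, that is, the denominator of \eqref{e:negspeeds} equals $\frac{3}{|C_g|D_g}\dot f(\alpha)$.

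Given these two identities the rest is bookkeeping. Since $E_g>0$ and the numerator $N$ of \eqref{e:negspeeds} is strictly positive (because $(1+\omega)\mu-(2-\omega)>0$ under the standing assumption \eqref{e:restr1-3}), the hypothesis forces the denominator to be positive, i.e. $\dot f(\alpha)>0$; in particular I may square. Substituting the expressions above and $E_g=|C_g|\sqrt{D_g/\lambda_g}$, the factor $|C_g|$ cancels and \eqref{e:negspeeds} becomes $N\sqrt{D_g\lambda_g}>3\dot f(\alpha)$; squaring and invoking the radicand identity reduces this to $\frac32\dot D(\alpha)g(\alpha)>\dot f(\alpha)^2$. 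A fortiori $\dot f(\alpha)^2<4\,\dot D(\alpha)g(\alpha)$, and since $\dot f(\alpha)>0$ this is exactly $\dot f(\alpha)<2\sqrt{\dot D(\alpha)g(\alpha)}$; thus the second argument of the minimum is negative, \eqref{e:negative speeds} holds, and Corollary \ref{t:CS}(ii) gives the claim. I expect the only real obstacle to be the algebraic reduction: one must carry the constant $2/27$ correctly and observe that the threshold $E_g$ chosen in \eqref{e:negspeeds} is in fact stronger than the sharp threshold $\frac{\sqrt6}{4}E_g$ produced by the computation, so that the passage from $\frac32$ to $4$ costs nothing.
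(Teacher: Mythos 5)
Your proposal is correct and follows essentially the same route as the paper: compute $\dot f(\alpha)$, $\dot D(\alpha)$ and $g(\alpha)$ explicitly, recognize the numerator and denominator of \eqref{e:negspeeds} as (constant multiples of) $\sqrt{\dot D(\alpha)g(\alpha)}$ and $\dot f(\alpha)$, and conclude that \eqref{e:negspeeds} implies $\dot f(\alpha)-2\sqrt{\dot D(\alpha)g(\alpha)}<0$, so that Corollary \ref{t:CS}(ii) applies via \eqref{e:negative speeds}. Your observation about the non-sharp constant is exactly the paper's remark that $2\sqrt{2/3}\in(1,2)$ (equivalently, the sharp threshold is $\tfrac{\sqrt6}{4}E_g<E_g$), so the identifications and the bookkeeping all check out.
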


\begin{proof}
First, we point out that the term $(1+\omega)r_i-(2-\omega)\lambda_g$ under the square root in \eqref{e:negspeeds} is positive because of \eqref{e:restr1-3}. By \eqref{e:coeffs f} we compute
\[
\begin{aligned}
\dot{f}(\alpha)&=-C_gD_g\left(3(\alpha-1)^2-1\right)+C_iD_i(1-\alpha)(3\alpha-1)
\\
&= \frac{C_gD_g\left(3-(1+\omega)^2\right)+C_iD_i(1-\omega^2)}{3}.
\end{aligned}
\]
By \eqref{e:Df2} and \eqref{e:Df3} we deduce
$\dot{D}(\alpha)=3(\alpha-\beta)(D_i-D_g)=-2\omega(D_i-D_g)$,
and, by \eqref{e:gamma}
\[
\begin{aligned}
g(\alpha)&=(r_i+\lambda_g)\alpha(1-\alpha)(\alpha-\gamma)
\\
&=\frac{(r_i+\lambda_g)(2-\omega)(\omega+1)(2-\omega-3\gamma)}{27}
\\
&=\frac{(2-\omega)(\omega+1)\left(-(\omega+1)r_i+(2-\omega)\lambda_g\right)}{27}.
\end{aligned}
\]
Therefore we have
\[
\dot{D}(\alpha)g(\alpha)= \frac{2}{27}(D_i-D_g)\omega(2-\omega)(\omega+1)\left((1+\omega)r_i-(2-\omega)\lambda_g\right).
\]
The proof is concluded by applying \eqref{e:negative speeds}  and noticing that $2\sqrt{2/3}\in(1,2)$.
\end{proof}

\begin{corollary}\label{cor:messi}
Under \eqref{e:restr2}, condition \eqref{e:negspeeds} is satisfied if $d<(5+2\sqrt{3})/2$.
\end{corollary}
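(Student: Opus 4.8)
The plan is to leverage the explicit formula for $\dot f(\alpha)$ already obtained inside the proof of Theorem \ref{t:modelc<0}, namely
\[
\dot f(\alpha) = \frac{|C_g|D_g}{3}\Big[(1+\omega)^2 + \c d\,(1-\omega^2) - 3\Big].
\]
Thus the denominator of \eqref{e:negspeeds} is precisely $3\dot f(\alpha)/(|C_g|D_g)$, and \eqref{e:negspeeds} is nothing but the inequality $\dot f(\alpha) < 2\sqrt{\dot D(\alpha)g(\alpha)}$ rearranged under the tacit hypothesis that this denominator is positive. The crucial remark is therefore that whenever the denominator is \emph{negative}, i.e.\ $\dot f(\alpha) < 0$, the inequality $\dot f(\alpha) < 2\sqrt{\dot D(\alpha)g(\alpha)}$ holds automatically, since $\dot D(\alpha)g(\alpha) > 0$ under \eqref{e:restr1-3}; so \eqref{e:negspeeds} is satisfied in this degenerate sense. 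The corollary then reduces to showing that $d < (5+2\sqrt3)/2$ forces the denominator to be negative for every admissible value of $\c d$.

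Accordingly, I would treat $\mathcal D(\c d) := (1+\omega)^2 + \c d\,(1-\omega^2) - 3$ as an affine function of $\c d$. Since $\omega\in(0,1)$ gives $1-\omega^2 > 0$, the map $\mathcal D$ is strictly increasing in $\c d$; under \eqref{e:restr2} the quantity $\c d$ ranges over $[0,3/2]$, so its maximum is attained at $\c d = 3/2$, where a short computation gives
\[
\mathcal D(3/2) = (1+\omega)^2 + \tfrac32(1-\omega^2) - 3 = -\tfrac12\big(\omega^2 - 4\omega + 1\big).
\]
The quadratic $\omega^2 - 4\omega + 1$ has roots $2\pm\sqrt3$, only $2-\sqrt3\in(0,1)$ being relevant, so $\mathcal D(3/2) < 0$ exactly when $\omega < 2-\sqrt3$. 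By the monotonicity just noted, $\mathcal D(3/2)<0$ then yields $\mathcal D(\c d) < 0$ for every $\c d\in[0,3/2]$, hence $\dot f(\alpha)<0$ throughout.

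It remains to recast $\omega < 2-\sqrt3$ as a condition on $d$. Inserting $\omega^2 = (d-4)/(d-1)$ from \eqref{e:omegadad} and $(2-\sqrt3)^2 = 7-4\sqrt3$, the inequality becomes $(d-4)/(d-1) < 7-4\sqrt3$; clearing the positive factor $d-1$ and solving for $d$ gives $d < (4\sqrt3-3)/(4\sqrt3-6)$, and a single cross-multiplication confirms that $(4\sqrt3-3)/(4\sqrt3-6) = (5+2\sqrt3)/2$, the stated threshold. The only genuinely delicate point—and the one I expect a careful reader to stumble on—is the reading of \eqref{e:negspeeds} across the sign change of its denominator: one must recognize that the inequality encodes $\dot f(\alpha) < 2\sqrt{\dot D(\alpha)g(\alpha)}$ and is therefore vacuously true once $\dot f(\alpha)<0$, rather than interpreting it literally as a positive fraction exceeding $E_g$, which is impossible here. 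Everything else is the elementary monotonicity and root analysis above.
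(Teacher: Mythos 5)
Your argument is correct and follows essentially the same route as the paper: the paper likewise observes that \eqref{e:negspeeds} holds once the quantity $E(\omega,\c d):=(1+\omega)^2+\c d(1-\omega^2)-3$ is negative, bounds it under \eqref{e:restr2} by $E(\omega,3/2)=-\tfrac12\omega^2+2\omega-\tfrac12$, locates the root $\omega=2-\sqrt3$, and translates this into $d<(5+2\sqrt3)/2$ via \eqref{e:omegadad}. Your explicit remark that the inequality must be read as $\dot f(\alpha)<2\sqrt{\dot D(\alpha)g(\alpha)}$ (so that a negative denominator makes it vacuously true) is exactly the implicit convention the paper uses when it recasts \eqref{e:negspeeds} as the cross-multiplied inequality \eqref{e:ppems}.
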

\begin{proof}
For $A(\omega):=\omega(1+\omega)(2-\omega)$ and $B(\omega):=(1+\omega)\mu-(2-\omega)$,
condition \eqref{e:negspeeds} is
\begin{align}
\frac{\sqrt{d-1}}{E_g}
\sqrt{A(\omega)B(\omega)}> (1+\omega)^2 +\c d(1-\omega^2)-3=:E(\omega,\c d).
\label{e:ppems}
\end{align}
Condition \eqref{e:negspeeds} is satisfied if the right-hand side of \eqref{e:ppems} is negative.
If $C_i=\c=0$ then this happens if $\omega<\sqrt{3}-1$. In the general case, we notice that
\[
E(\omega, \c d)\le E\left(\omega, \frac32\right)=-\frac{1}{2}\omega^2+2 \omega-\frac{1}{2}:=\phi(\omega).
\]
We have $\phi(0)=-\frac{1}{2}$, $\phi(1)=1$, $\phi$ is an increasing function when $\omega\in(0,1)$, and
$\phi(\omega)=0$ for $\omega\in(0,1)$ iff $\omega=2-\sqrt 3$. Then condition \eqref{e:negspeeds} is satisfied if $\omega<2-\sqrt{3}$, i.e., for $d<(5+2\sqrt{3})/2\sim 4.23$.
\end{proof}

\begin{remark}\label{rem:negative}
{\rm
Let us fix $C_g, D_g$. By Remark \ref{rem:specific1}, for every $\c >0$ and $d>4$ satisfying \eqref{e:restr2} the existence of wavefronts to \eqref{e:model} satisfying \eqref{e:bvs} holds for $(r_i,\lambda_g)$ in the triangle $\mathcal{T}_g(d)$. For $d\in(4,(5+2\sqrt{3})/2)$, every pair $(r_i,\lambda_g)\in\mathcal{T}_g(d)$ provides profiles, and all of them have {\em negative} speeds.
}
\end{remark}

\begin{remark}
\label{r:ext-per}
{\rm
When  $\gamma \to \alpha$, i.e., when $\gamma \to (2-\omega)/3$, we get $\tau(\omega,\c d, \gamma) \to 3E(\omega,\c d)$, for $E$ as in \eqref{e:ppems}. Hence, if $\gamma \sim \alpha$, the condition $\tau < 0$ implies that only wavefronts with negative speeds can agree with \eqref{e:model}-\eqref{e:bvs} (from \eqref{e:ppems}). This implies that the model only supports extinction.
}
\end{remark}
\subsection{A convective term which changes concavity}
\label{ssec:change-conv}
We now consider a convective term $f$ as in \eqref{e:coeffs f} (see also  \eqref{e:Df1}) which changes its concavity in $[0,1]$ and show that also in this case the model \eqref{e:model} can support wavefronts satisfying condition \eqref{e:bvs}. Due to the definition of $f$, a concavity change occurs iff $p\ne0$, and in this case only once, namely at $\frac23 + \frac{q}{3p}$. Moreover, when this occurs, then concavity and convexity are strict.

\begin{lemma}
\label{l:convconc}
Assume that $f$ has an inflection point in $(0,1)$. Then:
\begin{enumerate}[(i)]

\item $f$ is first convex and then concave  if and only if $\c d>\frac32$.

\item $f$ is first concave and then convex if and only if $\c<0$.
\end{enumerate}
\end{lemma}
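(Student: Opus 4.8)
The plan is to read off each case from the sign of the affine function $\ddot f$ at the two endpoints of $[0,1]$. From the proof of Lemma \ref{lem:conc} we have $\ddot f(u) = -6pu + 4p + 2q$, which is affine in $u$; hence its unique zero corresponds to the single inflection point, consistent with the discussion preceding the lemma. By hypothesis this zero lies strictly inside $(0,1)$, so $\ddot f$ takes opposite signs at the endpoints, i.e.\ $\ddot f(0)\,\ddot f(1) < 0$, and exactly one of $\ddot f(0)$, $\ddot f(1)$ is positive. Thus $f$ is convex-concave precisely when it is convex near $0$, i.e.\ $\ddot f(0) > 0$, and concave-convex precisely when it is convex near $1$, i.e.\ $\ddot f(1) > 0$. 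This reduces each statement to a single endpoint inequality.

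Next I would evaluate the endpoints: $\ddot f(0) = 4p + 2q = 2(2p+q)$ and $\ddot f(1) = -2p + 2q = 2(q-p)$. To rewrite these in the variables $\c$ and $d$, I use the standing assumption $C_g < 0$, which gives $q = C_gD_g = -|C_g|D_g$, together with $C_i = \c|C_g|$ and $D_i = dD_g$, so that $p = C_iD_i + C_gD_g = |C_g|D_g(\c d - 1)$. Setting $K := |C_g|D_g > 0$, a direct computation yields $2p + q = K(2\c d - 3)$ and $q - p = -K\,\c d$.

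Combining the two steps proves the lemma. For (i), $\ddot f(0) > 0 \iff 2\c d - 3 > 0 \iff \c d > 3/2$; for (ii), $\ddot f(1) > 0 \iff -\c d > 0 \iff \c d < 0$, which is equivalent to $\c < 0$ since $d > 4 > 0$. The relation $\ddot f(0)\,\ddot f(1) < 0$ guaranteed by the hypothesis ensures that the complementary endpoint inequality is automatic in each case; for instance $\c d > 3/2$ forces $\c > 0$, hence $q - p = -K\,\c d < 0$ and $\ddot f(1) < 0$, so no separate check is required.

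This is essentially a one-variable sign analysis, so I do not expect a genuine obstacle. The only point deserving care is the logical role of the inflection hypothesis: it is precisely what permits replacing the two-sided requirement (say $\ddot f(0) > 0$ \emph{and} $\ddot f(1) < 0$ for convex-concave) by a single endpoint condition, since an affine $\ddot f$ vanishing inside $(0,1)$ cannot keep one sign across $[0,1]$. It is also worth recording the internal consistency of the two regimes, namely that the parameter sets $\c d > 3/2$ and $\c < 0$ are disjoint and together exhaust exactly those values for which $f$ admits an interior inflection point.
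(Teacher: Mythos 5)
Your proof is correct and follows essentially the same route as the paper: both reduce the convex--concave (resp.\ concave--convex) property to the signs of $\ddot f(0)=2(2p+q)$ and $\ddot f(1)=2(q-p)$ and then translate these into the condition $\c d>\tfrac32$ (resp.\ $\c<0$) using $p=|C_g|D_g(\c d-1)$, $q=-|C_g|D_g$. Your extra observation that the interior-inflection hypothesis makes one of the two endpoint inequalities redundant is a small streamlining, not a different argument.
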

\begin{proof}
We argue as in the proof of Lemma \ref{lem:conc}. About {\em (i)}, the statement is  equivalent to
$2p+q> 0$ and $-p+q < 0$, i.e., $-2p<q<p$; hence $p>0$, and we conclude by \eqref{e:notation} and $\eqref{e:restr1-1}_1$.

About {\em (ii)}, the statement is  equivalent to $2p+q< 0$ and $-p+q > 0$, i.e., $p<q<-2p$; hence $p<0$ and $\c<0$ by $\eqref{e:restr1-1}_1$.
\end{proof}

To simplify calculations, in the following we only consider the case when $\gamma$, which is the inner zero of $g$ and is given by \eqref{e:gamma}, coincides with the inflection point of $f$; i.e., we assume in the current section (without further mention)
\begin{equation}\label{e:inflection}
\gamma=
\frac 23+\frac{C_gD_g}{3(C_gD_g+C_iD_i)} = \frac{3-2\c d}{3(1-\c d)}.
\end{equation}
Notice that the assumptions $p\ne 0$ and $r_i\ne0$ are equivalent to $\c d\ne 1$ (because of $\eqref{e:restr1-1}_1$) and $\c d\ne \frac 32$, respectively. Then
\begin{equation}\label{e:rilambdag}
r_i=\left(2-\frac{3}{\c d}\right)\lambda_g.
\end{equation}

\subsubsection{The convex-concave case}
We consider a function $f$ which is first convex and then concave, with $\gamma$ as inflection point, see Figure \ref{f:twof}.

%

We recall that we are assuming $\gamma\in(\alpha,\beta)$, see \eqref{e:restr1-3}. We now check the implications of this condition on $\c d$, because $\gamma$ also satisfies \eqref{e:inflection}. By Lemma \eqref{l:convconc}{\em (i)} and  $\eqref{e:restr1-1}_1$ we obtain that $C_iD_i+C_gD_g>0$ and hence $\gamma<\frac23<\beta$ by \eqref{e:inflection}.
On the other hand, the condition $\gamma>\alpha$ is equivalent to $\c d>1+\frac{1}{\omega} >2$ because of \eqref{e:alphabeta} and \eqref{e:inflection}, which strengthens the previous requirement $\c d>\frac32$. Summing up, under the assumptions of the current case, the parameters $\c d$ and $\gamma$ must satisfy the conditions
\begin{equation}\label{e:cd}
\c d>1+\frac{1}{\omega}  \qquad \text{and} \qquad \gamma \in \left(\frac13, \frac23\right).
\end{equation}

We now consider the issue of the existence of profiles. By making use of \eqref{e:deltaphiphi0}, the left-hand side of \eqref{e:ssigma} becomes
\begin{align}
\inf_{[0, \gamma]}\delta(f, \alpha)-\sup_{[\gamma, 1]}\delta(f, \beta) & =\frac{f(\alpha)}{\alpha}-\frac{f(\gamma)-f(\beta)}{\gamma-\beta}
\nonumber
\\
&=(2p+q)(\alpha-\beta-\gamma)+p(\beta^2-\alpha^2+\gamma\beta+\gamma^2)
\nonumber
\\
&=C_gD_g H_1(\omega, \gamma) +C_iD_i H_2(\omega, \gamma)
\nonumber
\\
&=\vert C_g\vert D_g\left(H_1(\omega, \gamma)+\c dH_2(\omega, \gamma)\right),
\label{e:infsup2}
\end{align}
where
\begin{equation*}
H_1(\omega, \gamma):=-\gamma^2-\gamma\left(\frac{\omega-7}{3}\right)+\frac {10}{9} \omega \quad \hbox{ and }  \quad  H_2(\omega, \gamma):=\gamma^2+\gamma\left(\frac{\omega-4}{3}\right)- \frac49\omega.
\end{equation*}
We now investigate the sign of  \eqref{e:infsup2}: its positivity is necessary for \eqref{e:ssigma} to hold. The set $\mathcal{S}$ has been defined in \eqref{e:setS}.

\begin{proposition}
\label{p:signinfsup1}
The quantity in \eqref{e:infsup2} is positive for every $(\omega, \c d) \in \mathcal{S}$.
\end{proposition}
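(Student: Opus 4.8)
The plan is to turn the stated positivity into a single-variable sign condition. Since $|C_g|D_g>0$, by \eqref{e:infsup2} it suffices to show $H_1(\omega,\gamma)+\c d\,H_2(\omega,\gamma)>0$; here $\gamma$ is not free but is locked to $\c d$ through the inflection constraint \eqref{e:inflection}, $\gamma=\frac{3-2\c d}{3(1-\c d)}$. Writing $m:=\c d$ and using the identity $\gamma(1-m)=\frac{3-2m}{3}$ to cancel the terms quadratic in $\gamma$, I would simplify the expression to
\begin{equation*}
H_1+m\,H_2=\frac{1}{9(1-m)}\Big[\,2(2m-3)(m-2)+\omega(2m-7)(m-1)\,\Big]=:\frac{-P(m,\omega)}{9(m-1)}.
\end{equation*}
On $\mathcal{S}$ we have $m=\c d>1+\frac1\omega>2$, so $m-1>0$ and the asserted positivity is \emph{equivalent} to the single inequality $P(m,\omega)<0$, with $P(m,\omega)=2(2m-3)(m-2)+\omega(2m-7)(m-1)$.

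Next I would exploit convexity in $m$. As a quadratic in $m$, $P(\cdot,\omega)$ has positive leading coefficient $4+2\omega$, hence for each fixed $\omega$ it is convex; therefore it suffices to check $P<0$ at the two endpoints $m=1+\frac1\omega$ and $m=\frac{12(2+3\omega)}{(4+\omega)^2}$ of the $\c d$-interval defining $\mathcal{S}$, since then $P<0$ on the whole open interval between them. At the lower endpoint a direct substitution (using $\omega(m-1)=1$) gives $P\big(1+\tfrac1\omega,\omega\big)=(2m-1)(2m-5)=(1+\tfrac2\omega)(\tfrac2\omega-3)$, which is negative exactly when $\omega>\frac23$. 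That $\omega>\frac23$ holds on $\mathcal{S}$ I would deduce from the two defining inequalities themselves: any admissible $\c d$ forces $1+\frac1\omega<\frac{12(2+3\omega)}{(4+\omega)^2}$, while $\frac{12(2+3\omega)}{(4+\omega)^2}<\frac52$ for all $\omega\in(0,1)$ (this is $5\omega^2-32\omega+32>0$, true since both roots of the quadratic exceed $1$); chaining these gives $1+\frac1\omega<\frac52$, i.e. $\omega>\frac23$.

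The main obstacle is the upper endpoint, i.e. verifying $P(m^\ast,\omega)<0$ at $m^\ast:=\frac{12(2+3\omega)}{(4+\omega)^2}$. Here I would write each of the four linear factors $2m^\ast-3$, $m^\ast-2$, $2m^\ast-7$, $m^\ast-1$ over the common denominator $(4+\omega)^2$ (e.g. $2m^\ast-3=\frac{3\omega(16-\omega)}{(4+\omega)^2}$), so that $P(m^\ast,\omega)=\frac{\omega}{(4+\omega)^4}\,N(\omega)$ with $N$ a quartic; the sign of $P(m^\ast,\omega)$ is then that of $N$ since $\omega>0$. Expanding gives $N(\omega)=7\omega^4-200\omega^3+144\omega^2+304\omega-1280$, and the decisive observation is that no sharp estimate is needed: for $\omega\in(0,1)$ one drops the nonpositive term $-200\omega^3$ and bounds the positive terms by their values at $\omega=1$, obtaining $N(\omega)\le 7+144+304-1280=-825<0$. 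This yields $P(m^\ast,\omega)<0$ and, together with the lower-endpoint step and convexity, completes the proof. The only point demanding care throughout is the sign bookkeeping when clearing the denominator $1-m<0$, which reverses the inequality and which I would flag at each manipulation.
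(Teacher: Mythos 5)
Your proof is correct, and it takes a genuinely different route from the paper's. The paper deliberately \emph{decouples} $\gamma$ from $\c d$: it treats $\gamma$ as an independent variable in $(\frac13,\frac23)$, bounds $H_1(\omega,\gamma)>\frac23+\omega$ (monotonicity in $\gamma$) and $H_2(\omega,\gamma)\ge -\bigl(\frac{4+\omega}{6}\bigr)^2$ (minimizing the parabola) separately, and then observes that $H_1+\c d\,H_2>\frac{2+3\omega}{3}-\c d\bigl(\frac{4+\omega}{6}\bigr)^2>0$ precisely under the upper bound $\c d<\frac{12(2+3\omega)}{(4+\omega)^2}$ defining $\mathcal{S}$ — in fact that bound is manufactured so that this crude estimate closes. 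You instead \emph{use} the inflection constraint \eqref{e:inflection} to eliminate $\gamma$ exactly, reducing the claim to $P(m,\omega)<0$ for a quadratic-in-$m$ polynomial $P$ with positive leading coefficient, and then exploit convexity to check only the two endpoints of the $\c d$-interval; your algebra at both endpoints checks out (I verified the factorization $(2m-1)(2m-5)$ at $m=1+\frac1\omega$, the reduction to the quartic $N(\omega)=7\omega^4-200\omega^3+144\omega^2+304\omega-1280$ at the upper endpoint, and the crude bound $N\le -825$), as does the auxiliary observation that $\mathcal{S}\ne\emptyset$ forces $\omega>\frac23$ via $\frac{12(2+3\omega)}{(4+\omega)^2}<\frac52$. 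The trade-off: the paper's argument is shorter, avoids the quartic, and proves the stronger statement that positivity holds for \emph{every} $\gamma\in(\frac13,\frac23)$ (so it does not depend on $\gamma$ being the inflection point); yours is tied to the constraint \eqref{e:inflection} but is sharper on the exact locus it covers and makes transparent that the positivity is an endpoint-plus-convexity phenomenon. Both are valid proofs of the proposition as stated, since in Section 4.2 the relation \eqref{e:inflection} is assumed throughout.
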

\begin{proof}
We know that $\gamma$, provided by \eqref{e:inflection}, is entirely determined by $\c d$ and that it varies in $(\frac13,\frac23)$ by \eqref{e:cd}. However, to simplify computations, we treat $\gamma$ in the current proof as an independent variable ranging in $(\frac13,\frac23)$.

First, we claim that for $\omega \in (0,1)$ and $\gamma \in (\frac 13, \frac 23)$ we have
\begin{equation}\label{e:estH12}
H_1(\omega, \gamma)>\frac 23 +\omega \quad \hbox{ and }\quad H_2(\omega, \gamma) >-\left(\frac{4+\omega}{6}\right)^2.
\end{equation}
In fact, estimate $\eqref{e:estH12}_1$ follows because the function $\gamma \mapsto H_1(\omega, \gamma)$ is increasing for $\gamma \in (\frac 13, \frac 23)$. Concerning  $\eqref{e:estH12}_2$, we have
$\min_{\gamma \in (\frac13, \frac23)}H_2(\omega, \gamma)=H_2(\omega, \frac{4-\omega}{6})=-(\frac{4+\omega}{6})^2$.

Next, according to \eqref{e:estH12} and since $\c d>0$ we have, for all $\gamma \in (\frac13, \frac 23)$,
\begin{equation}\label{e:stima-H1H2}
H_1(\omega, \gamma)+\c dH_2(\omega, \gamma)>\frac{2+3\omega}{3}-\c d\left(\frac{4+\omega}{6} \right)^2.
\end{equation}
The latter quantity is positive iff $\c d <\frac{12(2+3\omega)}{(4+\omega)^2}
$. By $\eqref{e:cd}_1$ we need $\frac{12(2+3\omega)}{(4+\omega)^2}>1+\frac{1}{\omega}$, and this is equivalent to require $\omega>\omega_0$, where $\omega_0 \sim 0.78$ is the only root of $\omega^3-27\omega^2+16$ in the interval $(0,1)$.
\end{proof}

The following result shows the existence of wavefronts for  equation \eqref{e:model} satisfying \eqref{e:bvs}  when $f$ is first convex and then concave.

\begin{theorem}\label{t:convconc-e}
Assume that $f$ is convex in $[0,\gamma]$, concave in $[\gamma,1]$, and $(\omega, sd)\in \mathcal{S}$.
If
\begin{equation}\label{e:formula}
\frac{\sqrt{(d-1)}}{H_1\left(\omega, \gamma\right)+\c dH_2\left(\omega,  \gamma\right)} < \frac{E_g}{4}
\end{equation}
holds, then equation \eqref{e:model} has wavefronts  satisfying condition \eqref{e:bvs}.
\end{theorem}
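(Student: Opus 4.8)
The plan is to verify the sufficient condition \eqref{e:ssigma} of Corollary \ref{t:CS}: once \eqref{e:ssigma} holds, that corollary immediately produces wavefronts of \eqref{e:model} satisfying \eqref{e:bvs}. Thus the whole task reduces to showing that, under the standing hypotheses ($f$ convex-concave with inflection point at $\gamma$, so that \eqref{e:rilambdag} holds, and $(\omega,\c d)\in\mathcal{S}$) together with \eqref{e:formula}, the left-hand side of \eqref{e:ssigma} strictly exceeds its right-hand side.

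For the left-hand side I would simply invoke the identity \eqref{e:infsup2}, already established in the convex-concave case, namely
\[
\inf_{[0,\gamma]}\delta(f,\alpha)-\sup_{[\gamma,1]}\delta(f,\beta)=|C_g|D_g\bigl(H_1(\omega,\gamma)+\c d\,H_2(\omega,\gamma)\bigr),
\]
and then Proposition \ref{p:signinfsup1} to guarantee that the bracket $H_1+\c d\,H_2$ is strictly positive on $\mathcal{S}$. This positivity is essential: it makes the denominator in \eqref{e:formula} meaningful and lets me later divide by it without reversing inequalities.

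For the right-hand side I would bound $2\sup_{[0,\gamma]}\sqrt{\Delta(Dg,\alpha)}+2\sup_{[\gamma,1]}\sqrt{\Delta(Dg,\beta)}$ from above by Lemma \ref{l:stimaDx}, whose bound is expressed through $E_g,d,\mu$ and $\omega$. Combining this with the explicit left-hand side above and using \eqref{e:def-cd} to rewrite $|C_g|D_g$ through $E_g$, the common dimensional factors cancel and \eqref{e:ssigma} reduces to the dimensionless inequality
\[
\frac{\sqrt{d-1}\,\bigl(\sqrt{\mu(2+\omega)}+\sqrt{1+\omega}\bigr)}{H_1(\omega,\gamma)+\c d\,H_2(\omega,\gamma)}<E_g .
\]
The decisive step — the only place a genuine estimate is needed — is to absorb the $\mu$-dependent factor into the constant $4$ of \eqref{e:formula}. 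Here I would use the relation $\mu=2-3/(\c d)$, which follows from \eqref{e:rilambdag} precisely because $\gamma$ is the inflection point; since $\c d>0$ this forces $0<\mu<2$, and with $\omega\in(0,1)$ it yields the uniform bound
\[
\sqrt{\mu(2+\omega)}+\sqrt{1+\omega}<\sqrt{6}+\sqrt{2}<4 .
\]
Hence the reduced inequality is implied by $4\sqrt{d-1}/(H_1+\c d\,H_2)<E_g$, which is exactly \eqref{e:formula}. Chaining the implications gives \eqref{e:ssigma}, and Corollary \ref{t:CS} then delivers the wavefronts.

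I expect no real obstacle beyond bookkeeping: the substantive analytic content (the sign of the left-hand side and the upper estimate of the $Dg$-term) is already packaged in Proposition \ref{p:signinfsup1} and Lemma \ref{l:stimaDx}. The one point requiring care is the passage from the $\mu$-dependent bound to the clean factor $1/4$, since it relies specifically on the inflection-point normalization $\mu=2-3/(\c d)<2$; were that constraint relaxed, the constant $4$ in \eqref{e:formula} would have to be enlarged accordingly.
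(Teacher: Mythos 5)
Your proposal is correct and follows essentially the same route as the paper: it verifies \eqref{e:ssigma} by combining the identity \eqref{e:infsup2} with the positivity from Proposition \ref{p:signinfsup1}, bounds the right-hand side via Lemma \ref{l:stimaDx}, and uses the inflection-point relation \eqref{e:rilambdag} (so $\mu=2-3/(\c d)<2$) to absorb $\sqrt{\mu(2+\omega)}+\sqrt{1+\omega}\le\sqrt{6}+\sqrt{2}<4$ into the constant $4$, exactly as in the paper's proof of Theorem \ref{t:convconc-e}.
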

\begin{proof}
According to \eqref{e:rilambdag}, Lemma \ref{l:stimaDx} and the fact that $\c d>0$, we have
\begin{align}
2\sup_{[0, \gamma]}\sqrt{\Delta(Dg, \alpha)} &+2\sup_{[\gamma,1]}\sqrt{\Delta(Dg, \beta)}\le \sqrt{D_g}\sqrt{d-1}\left( \sqrt{r_i(2+\omega)} + \sqrt{\lambda_g(1+\omega)}\right)\nonumber\\
= &\sqrt{D_g}\sqrt{\lambda_g(d-1)}\left(\sqrt{\left(2-\frac{3}{\c d}\right)(2+\omega)} +\sqrt{1+\omega}\right) \nonumber\\
\le &\sqrt{D_g}\sqrt{\lambda_g(d-1)}\left(\sqrt{\left(6-\frac{9}{\c d}\right)} +\sqrt{2}\right)
\le 4\sqrt{D_g}\sqrt{\lambda_g(d-1)}.
\end{align}
Now, we assumed $(\omega, \c d) \in \mathcal{S}$ and then $H_1(\omega, \gamma)+\c dH_2(\omega, \gamma)>0$ by Proposition \ref{p:signinfsup1}. Hence, if \eqref{e:formula} is satisfied, then condition \eqref{e:ssigma} holds true by \eqref{e:infsup2} and then \eqref{e:model} has wavefronts satisfying \eqref{e:bvs}.
\end{proof}

\subsubsection{The concave-convex case}
We now assume that $f$ is concave in $[0, \gamma]$ and convex in $[\gamma, 1]$, with $\gamma$ as inflection point, see Figure \ref{f:twof}. Again, we show that \eqref{e:model} admits wavefronts satisfying \eqref{e:bvs} under some conditions.

%

We argue as in the convex-concave case. Lemma \ref{l:convconc}{\em (ii)} implies $\gamma \in (2/3,1)$. Moreover, the condition $\gamma<\beta$ is equivalent to $\c d<1-\frac1\omega<0$ by \eqref{e:alphabeta} and \eqref{e:inflection}. Summing up, the parameters $\c d$ and $\gamma$ must now satisfy the conditions
\begin{equation}\label{e:cd2}
\c d<1-\frac{1}{\omega} \quad \mbox{ and } \quad \gamma \in \left(\frac23, 1\right).
\end{equation}

We now compute the left-hand side of \eqref{e:ssigma}. According to \eqref{e:deltaphiphi0} we have
\begin{align}
\inf_{[0, \gamma]}\delta(f, \alpha)-\sup_{[\gamma, 1]}\delta(f, \beta)&=\frac{f(\gamma)-f(\alpha)}{\gamma-\alpha}-\frac{f(1)-f(\beta)}{1-\beta}\nonumber\\
&=(2p+q)(\alpha-\beta+\gamma-1)+p(\beta^2-\alpha^2-\alpha\gamma-\gamma^2+\beta+1),\nonumber\\
&=C_gD_g \tilde{H}_1(\omega, \gamma) +C_iD_i \tilde{H}_2(\omega, \gamma)\nonumber\\
&=\vert C_g\vert D_g \left(\tilde{H}_1(\omega,\gamma)
 + \c d\tilde{H}_2(\omega,\gamma)\right),
\label{e:infsup3-1}
\end{align}
for
\begin{equation*}
\tilde{H}_1(\omega, \gamma):=\gamma^2-\gamma\left(\frac{\omega+7}{3}\right)+\frac {7}{9} \omega +\frac 43 \quad \hbox{ and }  \quad  \tilde{H}_2(\omega, \gamma):=-\gamma^2+\gamma\left(\frac{\omega+4}{3}\right)- \frac{\omega}{9}- \frac 13.
\end{equation*}

We now discuss the sign of \eqref{e:infsup3-1}; the set $\tilde{\mathcal{S}}$ was defined in \eqref{e:setStilde}.

\begin{proposition}
\label{l:signinfsup3}
The quantity in \eqref{e:infsup3-1} is positive for $(\omega, \c d) \in \tilde{\mathcal{S}}$.
\end{proposition}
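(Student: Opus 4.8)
The plan is to mirror the proof of Proposition \ref{p:signinfsup1}, adapting it to the sign change of $\c d$. As there, I would treat $\gamma$ as an independent variable ranging over the interval $(\frac23, 1)$ prescribed by $\eqref{e:cd2}_2$, so that a bound valid for all such $\gamma$ applies in particular to the actual $\gamma$ given by \eqref{e:inflection}. By \eqref{e:infsup3-1} it suffices to prove that $\tilde{H}_1(\omega, \gamma) + \c d\, \tilde{H}_2(\omega, \gamma) > 0$; the crucial difference from the convex-concave case is that here $\c d < 0$ by $\eqref{e:cd2}_1$, so in order to bound the product $\c d\, \tilde{H}_2$ from below I need an \emph{upper} bound on $\tilde{H}_2$ (whereas the previous proof used a lower bound on $H_2$).

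First I would estimate $\tilde{H}_1$ from below. Since $\tilde{H}_1(\omega, \cdot)$ is an upward parabola in $\gamma$ with vertex at $\frac{\omega+7}{6} > 1$, it is decreasing on $(\frac23, 1)$, so its infimum there is the boundary value $\tilde{H}_1(\omega, 1) = \frac{4\omega}{9}$; hence $\tilde{H}_1(\omega, \gamma) > \frac{4\omega}{9}$. Next I would estimate $\tilde{H}_2$ from above. Since $\tilde{H}_2(\omega, \cdot)$ is a downward parabola with vertex at $\frac{\omega+4}{6} \in (\frac23, \frac56) \subset (\frac23, 1)$, its maximum on the interval is attained at the vertex, and a direct computation gives $\tilde{H}_2\bigl(\omega, \frac{\omega+4}{6}\bigr) = \frac{(\omega+2)^2}{36}$, so that $\tilde{H}_2(\omega, \gamma) \le \frac{(\omega+2)^2}{36}$.

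Combining these bounds and using $\c d < 0$ to reverse the inequality when multiplying the upper bound on $\tilde{H}_2$, I obtain
\[
\tilde{H}_1(\omega, \gamma) + \c d\, \tilde{H}_2(\omega, \gamma) > \frac{4\omega}{9} + \c d\, \frac{(\omega+2)^2}{36}.
\]
The right-hand side is positive precisely when $\c d > -\frac{16\omega}{(\omega+2)^2}$, which is exactly the left-hand defining inequality of $\tilde{\mathcal{S}}$ in \eqref{e:setStilde}. Thus positivity of \eqref{e:infsup3-1} follows for every $(\omega, \c d) \in \tilde{\mathcal{S}}$.

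The computations are routine parabola analyses; the only point that requires care — and the one place where this argument genuinely differs from the convex-concave one — is tracking the direction of the inequality through the factor $\c d < 0$, which dictates that $\tilde{H}_2$ be controlled by its \emph{maximum} rather than its minimum, and it is precisely this maximum $\frac{(\omega+2)^2}{36}$ together with the infimum $\frac{4\omega}{9}$ of $\tilde{H}_1$ that reproduces the threshold $-\frac{16\omega}{(\omega+2)^2}$ appearing in the definition of $\tilde{\mathcal{S}}$.
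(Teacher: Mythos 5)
Your proof is correct and follows essentially the same route as the paper: bound $\tilde{H}_1$ from below by its value $\frac{4\omega}{9}$ at $\gamma=1$ (the parabola being decreasing up to its vertex $\frac{\omega+7}{6}>1$), bound $\tilde{H}_2$ from above by its vertex value $\bigl(\frac{\omega+2}{6}\bigr)^2$, and use $\c d<0$ to reverse the inequality, recovering exactly the threshold $-\frac{16\omega}{(\omega+2)^2}$ from \eqref{e:setStilde}. The only cosmetic difference is that the paper lets $\gamma$ range over $(0,1)$ (and takes the maximum of $\tilde{H}_2$ over all of $\R$) and additionally verifies that $\tilde{\mathcal{S}}$ is nonempty for $\omega$ above a root of $\omega^3+19\omega^2-4$, which is not needed for the statement itself.
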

\begin{proof}
Notice that, according to \eqref{e:inflection} $\gamma$ depends on $\c d$; however, as in the proof of Proposition \ref{p:signinfsup1}, we treat $\gamma$ as an independent variable ranging in $(0,1)$.

First, we claim that for all $\omega,\gamma \in (0,1)$ we have
\begin{equation}\label{e:sign}
\tilde H_1(\omega, \gamma) > \frac 49 \omega \quad \hbox{ and }\quad
\tilde H_2(\omega,\gamma) <\left(\frac{\omega +2}{6}\right)^2.
\end{equation}
About $\eqref{e:sign}_1$, since $\tilde H_1(\omega, \gamma)$ is a decreasing function for $\gamma \in (-\infty, \frac{\omega+7}{6})$ and $\frac{\omega+7}{6}>1$,  we obtain $\tilde H_1(\omega, \gamma)> \tilde H_1(\omega, 1)=\frac 49 \omega$ for  $\gamma \le 1$. About $\eqref{e:sign}_2$, we have $\max_{\gamma \in \mathbb{R}} \tilde H_2(\omega,\gamma)=\tilde H_2(\omega,\frac{\omega+4}{6})=(\frac{\omega+2}{6})^2$.

Next, according to \eqref{e:sign} and since $\c d<0$ we have
\begin{equation}\label{e:stimatilde}
\tilde{H}_1(\gamma)+\c d\tilde{H}_2(\gamma)>\frac 49 \omega+\c d \left( \frac{\omega +2}{6}\right)^2,
\end{equation}
which is positive when
$\c d > -16\omega/(\omega +2)^2$.
Since we have $\c d < 1-\frac{1}{\omega}$ we need that
\[
1-\frac{1}{\omega}>-\frac{16\omega}{(\omega +2)^2},
\]
which is true when $\omega>\tilde\omega_0$, where $\tilde\omega_0\sim 0.45$ is the only root of $\omega^3+19\omega^2-4$ in $(0,1)$. This completes the proof.
\end{proof}

\begin{theorem}
Assume  that $f$ is concave in $[0,\gamma]$, convex in $[\gamma,1]$ and $(\omega,sd)\in\tilde{\mathcal{S}}$. If
\begin{equation}
\label{e:formula2.2}
\frac{5\sqrt{d-1}}{(1-\omega)\left(\tilde{H_1}(\omega,\gamma)+\c d \tilde{H_2}(\omega,\gamma)\right)} < E_g
\end{equation}
holds, then the equation \eqref{e:model} has wavefronts  satisfying \eqref{e:bvs}.
\end{theorem}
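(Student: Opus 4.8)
The plan is to mirror the proof of Theorem~\ref{t:convconc-e}, applying the sufficient condition \eqref{e:ssigma} of Corollary~\ref{t:CS}. Its left-hand side has already been identified in \eqref{e:infsup3-1} as $|C_g|D_g\bigl(\tilde{H}_1(\omega,\gamma)+\c d\,\tilde{H}_2(\omega,\gamma)\bigr)$, and this quantity is positive on $\tilde{\mathcal{S}}$ by Proposition~\ref{l:signinfsup3}; so the whole task reduces to estimating the right-hand side of \eqref{e:ssigma} and absorbing the resulting square roots into the clean factor $5/(1-\omega)$ that appears in \eqref{e:formula2.2}.

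First I would bound the right-hand side of \eqref{e:ssigma}. By Lemma~\ref{l:stimaDx} together with the relation $r_i=(2-3/(\c d))\lambda_g$ from \eqref{e:rilambdag}, one obtains
\[
2\sup_{[0,\gamma]}\sqrt{\Delta(Dg,\alpha)}+2\sup_{[\gamma,1]}\sqrt{\Delta(Dg,\beta)} \le \sqrt{D_g\lambda_g}\,\sqrt{d-1}\left(\sqrt{\left(2-\tfrac{3}{\c d}\right)(2+\omega)}+\sqrt{1+\omega}\right),
\]
which is well defined since $\c d<0$ here, so that $2-3/(\c d)>2$.

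The heart of the argument is to bound the bracket by $5/(1-\omega)$. Using the constraint $\c d<1-1/\omega$ from \eqref{e:cd2} and inverting carefully (both members are negative, so the inequality reverses), I get $-3/(\c d)<3\omega/(1-\omega)$, hence $2-3/(\c d)<(2+\omega)/(1-\omega)$. Consequently $\sqrt{(2-3/(\c d))(2+\omega)}<(2+\omega)/\sqrt{1-\omega}$ and $\sqrt{1+\omega}<\sqrt{2}$, so the bracket is at most $(2+\omega)/\sqrt{1-\omega}+\sqrt{2}$. Multiplying by $1-\omega>0$ reduces the desired bound to the elementary inequality $(2+\omega)\sqrt{1-\omega}+\sqrt{2}\,(1-\omega)<5$, which holds for all $\omega\in(0,1)$ (crudely, the first summand is below $3$ and the second below $\sqrt{2}$).

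Finally I would assemble the estimates. Dividing \eqref{e:infsup3-1} against the displayed bound, using $E_g=|C_g|\sqrt{D_g/\lambda_g}$ and the positivity of $\tilde{H}_1+\c d\,\tilde{H}_2$, condition \eqref{e:ssigma} is equivalent to
\[
\frac{\sqrt{d-1}\left(\sqrt{(2-3/(\c d))(2+\omega)}+\sqrt{1+\omega}\right)}{\tilde{H}_1(\omega,\gamma)+\c d\,\tilde{H}_2(\omega,\gamma)}<E_g ,
\]
and by the bracket estimate its left-hand side is strictly smaller than that of \eqref{e:formula2.2}. Hence \eqref{e:formula2.2} implies \eqref{e:ssigma}, and Corollary~\ref{t:CS} yields the wavefronts satisfying \eqref{e:bvs}. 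I expect the only delicate point to be the sign handling of $\c d<0$ in passing from $\c d<1-1/\omega$ to the estimate on $2-3/(\c d)$; the rest is a direct transcription of the convex-concave case.
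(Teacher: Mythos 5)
Your argument is correct and follows essentially the same route as the paper: identify the left-hand side of \eqref{e:ssigma} via \eqref{e:infsup3-1} and Proposition \ref{l:signinfsup3}, bound the right-hand side using Lemma \ref{l:stimaDx} with \eqref{e:rilambdag} and the constraint $\c d<1-1/\omega$, and absorb the bracket into $5/(1-\omega)$ (the paper reaches the same constant via $\sqrt{6-9/(\c d)}\le 3/\sqrt{1-\omega}$ and $3+\sqrt2<5$, a cosmetically different but equivalent chain of estimates). The only nitpick is that your displayed condition \emph{implies} \eqref{e:ssigma} rather than being equivalent to it, but since you only use that direction the proof stands.
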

\begin{proof}
As in the proof of Theorem \ref{t:convconc-e}, by taking $\c d < 1-\frac1{\omega}$ into account, we have
\begin{align}
2\sup_{[0, \gamma]}\sqrt{\Delta(Dg, \alpha)}&+2\sup_{[\gamma,1]}\sqrt{\Delta(Dg, \beta)}
\le \sqrt{\lambda_gD_g} \sqrt{d-1}\left(\sqrt{\left(6-\frac{9}{\c d}\right)} +\sqrt{2}\right)
\nonumber
\\
\le & \sqrt{\lambda_gD_g} \sqrt{d-1}\left(\sqrt{\frac{6+3\omega}{1-\omega}} +\sqrt{2}\right)
\le \sqrt{\lambda_gD_g} \sqrt{d-1}\left(\frac{3}{\sqrt{1-\omega}} +\sqrt{2}\right)
\nonumber
\\
\le & \sqrt{\lambda_gD_g} \sqrt{d-1}\frac{3 +\sqrt{2}}{\sqrt{1-\omega}}
\le  5\sqrt{\lambda_g(d-1)}\frac{\sqrt{D_g}}{1-\omega}.
\label{e:estrhs-bis}
\end{align}
Since we assumed $(\omega,\c d)\in\tilde{\mathcal{S}}$, then, by \eqref{e:infsup3-1}, conditions \eqref{e:formula2.2} and \eqref{e:estrhs-bis} imply \eqref{e:ssigma}; according to Corollary \ref{t:CS}, the model  \eqref{e:model} has wavefronts satisfying \eqref{e:bvs}.
\end{proof}

\section*{Acknowledgments}
The authors are members of the {\em Gruppo Nazionale per l'Analisi Matematica, la Probabilit\`{a} e le loro Applicazioni} (GNAMPA) of the {\em Istituto Nazionale di Alta Matematica} (INdAM) and acknowledge financial support from this institution. D.B. was supported by PRIN grant 2020XB3EFL.

{
\small

}

\end{document}